\def\input@path{{figures/}}\makeatother
\newtheorem{theorem}{Theorem}[section]
\newtheorem{corollary}[theorem]{Corollary}
\newtheorem{proposition}[theorem]{Proposition}
\newtheorem{lemma}[theorem]{Lemma}
\newtheorem*{theorem*}{Theorem}
\theoremstyle{definition}
\newtheorem{definition}[theorem]{Definition}
\newtheorem{example}[theorem]{Example}
\newtheorem{remark}[theorem]{Remark}
\crefname{equation}{Equation}{Equations}
\newcommand{\R}{\mathbb{R}} 
\newcommand{\HH}{\mathbb{H}} 
\renewcommand{\c}[1]{{\mathcal{#1}}} 
\renewcommand{\b}[1]{{\boldsymbol{#1}}} 
\newcommand{\set}[2]{\left\{ #1 \;\middle|\; #2 \right\}} 
\newcommand{\bigset}[2]{\big\{ #1 \;\big|\; #2 \big\}} 
\newcommand{\Bigset}[2]{\Big\{ #1 \;\Big|\; #2 \Big\}} 
\newcommand{\ssm}{\smallsetminus} 
\newcommand{\dotprod}[2]{\langle \, #1 \; | \; #2 \, \rangle} 
\newcommand{\eqdef}{\mbox{\,\raisebox{0.2ex}{\scriptsize\ensuremath{\mathrm:}}\ensuremath{=}\,}} 
\renewcommand{\implies}{\Rightarrow} 
\DeclareMathOperator{\conv}{conv} 
\newcommand{\ie}{\textit{i.e.}~} 
\newcommand{\eg}{\textit{e.g.}~} 
\newcommand{\aka}{\textit{a.k.a.}~} 
\definecolor{darkblue}{rgb}{0,0,0.7} 
\definecolor{green}{RGB}{57,181,74} 
\definecolor{violet}{RGB}{147,39,143} 
\newcommand{\red}{\color{red}} 
\newcommand{\blue}{\color{blue}} 
\newcommand{\orange}{\color{orange}} 
\newcommand{\darkblue}{\color{darkblue}} 
\newcommand{\defn}[1]{\textsl{\darkblue #1}} 
\newcommand{\para}[1]{\medskip\noindent\textsc{#1.}} 
\newcommand{\compactVectorT}[3]{\begin{bmatrix} #1 \\[-.1cm] #2 \\[-.1cm] #3 \end{bmatrix}}
\newcommandx{\Asso}[2][1=n,2={}]{\mathsf{Asso}^{#2}(#1)} 
\newcommandx{\Nest}[2][1=\building,2={}]{\mathsf{Nest}^{#2}(#1)} 
\newcommandx{\Zono}[2][1=n,2={}]{\mathsf{Zono}^{#2}(#1)} 
\newcommandx{\Fan}[1][1=F]{\mathcal{#1}} 
\newcommand{\gvector}[1]{\b{g}(#1)} 
\newcommand{\gvectors}[1]{\b{g}(#1)} 
\newcommandx{\nestedFan}[1][1=\quiver]{\mathcal{F}(#1)} 
\newcommand{\typeCone}{\mathbb{TC}} 
\newcommand{\ctypeCone}{\smash{\overline{\mathbb{TC}}}} 
\newcommandx{\coefficient}[3][1={\b{s}}, 2=\b{r}, 3=\b{r}']{\alpha_{#2,#3}(#1)} 
\newcommand{\ground}{V} 
\newcommandx{\graphG}[1][1=G]{#1} 
\newcommandx{\hypergraph}[1][1=H]{\graphG[#1]} 
\newcommandx{\tube}[1][1=t]{\mathsf{#1}} 
\newcommandx{\tubes}[1][1=\graphG]{\building#1} 
\newcommandx{\tubing}[1][1=T]{\mathsf{#1}} 
\newcommand{\connectedComponents}{\kappa} 
\newcommand{\nestedComplex}{\mathcal{N}} 
\newcommand{\nonDisconnecting}{\mathrm{nd}} 
\newcommand{\building}{\mathcal{B}} 
\newcommand{\elementary}{\varepsilon} 
\newcommand{\maximalBlocks}{\mu} 
\newcommandx{\nested}[1][1=N]{\mathcal{#1}} 
\newcommand{\rootset}[2]{\b{r}(#1,#2)} 
\newcommand{\leaving}[3]{#1 \vdash #2 \subseteq #3} 
\def\part{\@startsection{part}{1}%
\z@{.7\linespacing\@plus\linespacing}{.8\linespacing}%
{\LARGE\sffamily\centering}}
\def\l@section{\@tocline{1}{5pt}{0pc}{}{}}
\let\oldtocpart=\tocpart
\renewcommand{\tocpart}[2]{\sc\large\oldtocpart{#1}{#2}}
\let\oldtocsection=\tocsection
\renewcommand{\tocsection}[2]{\bf\oldtocsection{#1}{#2}}
\let\oldtocsubsubsection=\tocsubsubsection
\renewcommand{\tocsubsubsection}[2]{\quad\oldtocsubsubsection{#1}{#2}}
\title{Deformation cones of graph associahedra and nestohedra}
\thanks{Partially supported by the French ANR grants CAPPS~17\,CE40\,0018, and CHARMS~19\,CE40\,0017.}
\author{Arnau Padrol}
\address[Arnau Padrol]{Institut de Math\'ematiques de Jussieu - Paris Rive Gauche, Sorbonne Universit\'e, Paris}
\email{arnau.padrol@imj-prg.fr}
\urladdr{\url{https://webusers.imj-prg.fr/~arnau.padrol/}}
\author{Vincent Pilaud}
\address[Vincent Pilaud]{CNRS \& LIX, \'Ecole Polytechnique, Palaiseau}
\email{vincent.pilaud@lix.polytechnique.fr}
\urladdr{\url{http://www.lix.polytechnique.fr/~pilaud/}}
\author{Germain Poullot}
\address[Germain Poullot]{Institut de Math\'ematiques de Jussieu - Paris Rive Gauche, Sorbonne Universit\'e,~Paris}
\email{germain.poullot@imj-prg.fr}
\urladdr{\url{https://webusers.imj-prg.fr/germain.poullot}}
\begin{document}

\begin{abstract}
We give the facet description of the deformation cones of graph associahedra and nestohedra, generalizing the classical parametrization of the family of deformed permutahedra by the cone of submodular functions.
When the underlying building set is made of intervals, this leads in particular to the construction of kinematic nestohedra generalizing the kinematic associahedra that recently appeared in the theory of scattering amplitudes.
\end{abstract}

\maketitle

\setcounter{tocdepth}{2}
\tableofcontents


\section*{Introduction}

A \defn{deformation} of a polytope~$P$ can be equivalently described as 
\begin{enumerate*}[(i)]
 \item a polytope whose normal fan coarsens the normal fan of~$P$~\cite{McMullen-typeCone}, 
 \item a Minkowski summand of a dilate of~$P$~\cite{Meyer,Shephard},
 \item a polytope obtained from~$P$ by perturbing the vertices so that the directions of all edges are preserved~\cite{Postnikov,PostnikovReinerWilliams}, 
 \item a polytope obtained from~$P$ by gliding its facets in the direction of their normal vectors without passing a vertex~\cite{Postnikov,PostnikovReinerWilliams}.
\end{enumerate*}
A sequence of deformations is illustrated in \cref{fig:deformations}.
The deformations of~$P$ form a polyhedral cone under dilation and Minkowski addition, called the \defn{deformation cone} of~$P$~\cite{Postnikov}.
The interior of the deformation cone of~$P$, called the \defn{type cone}~\cite{McMullen-typeCone}, contains those polytopes with the same normal fan as~$P$. 
When~$P$ is a rational polytope, it has an associated toric variety~\cite{CoxLittleSchenckToric}, and the type cone (here known as the \defn{numerically effective cone}, or shortly \defn{nef cone}) encodes its embeddings into projective space~\cite[Sect.~6.3]{CoxLittleSchenckToric}.
Among the different ways to parametrize and describe the deformation cone of a polytope~$P$ (see \eg \cite[App.~15]{PostnikovReinerWilliams}), we use the parametrization by the \defn{heights} corresponding to the facets of~$P$ and the description given by the \defn{wall-crossing inequalities} corresponding to the edges of~$P$~\cite{ChapotonFominZelevinsky}.
While this inequality description is immediately derived from the linear dependences among certain normal vectors of~$P$, it is in general more difficult to extract the irredundant facet inequality description of the deformation cone.

Fundamental examples of deformations of polytopes are the \defn{deformed permutahedra} (\aka generalized permutahedra or polymatroids) studied in~\cite{Edmonds, Postnikov, PostnikovReinerWilliams}, which are classically parametrized by \defn{submodular functions}.
Among the most famous deformed permutahedra are the classical \defn{associahedra} as constructed in~\cite{ShniderSternberg,Loday} (or even in~\cite{HohlwegLange}).
Associahedra appear in several mathematical contexts, from their original definition in topology~\cite{Stasheff} to some recent appearances in the theory of scattering amplitudes in mathematical physics~\cite{ArkaniHamedBaiHeYan}.

\begin{figure}
	\capstart
	\centerline{

\begin{tabular}{ccccc}
	\begin{tikzpicture}%
	[x={(-0.366215cm, -0.789554cm)},
	y={(0.235950cm, -0.590693cm)},
	z={(0.900119cm, -0.166391cm)},
	scale=.3,
	back/.style={very thin},
	edge/.style={color=blue, very thick},
	facet/.style={fill=blue,fill opacity=0},
	vertex/.style={inner sep=1pt,circle,fill=blue,thick},
	baseline=0]

\coordinate (-3.66667, -3.77124, -3.26599) at (-3.66667, -3.77124, -3.26599);
\coordinate (5.66667, 2.35702, 0.81650) at (5.66667, 2.35702, 0.81650);
\coordinate (5.66667, 2.35702, -0.81650) at (5.66667, 2.35702, -0.81650);
\coordinate (-3.66667, -3.77124, 3.26599) at (-3.66667, -3.77124, 3.26599);
\coordinate (5.66667, -0.47140, 2.44949) at (5.66667, -0.47140, 2.44949);
\coordinate (5.66667, -0.47140, -2.44949) at (5.66667, -0.47140, -2.44949);
\coordinate (5.66667, -1.88562, 1.63299) at (5.66667, -1.88562, 1.63299);
\coordinate (5.66667, -1.88562, -1.63299) at (5.66667, -1.88562, -1.63299);
\coordinate (-3.66667, -0.94281, -4.89898) at (-3.66667, -0.94281, -4.89898);
\coordinate (0.33333, 6.12826, 0.81650) at (0.33333, 6.12826, 0.81650);
\coordinate (0.33333, 6.12826, -0.81650) at (0.33333, 6.12826, -0.81650);
\coordinate (-3.66667, -0.94281, 4.89898) at (-3.66667, -0.94281, 4.89898);
\coordinate (0.33333, -2.35702, 5.71548) at (0.33333, -2.35702, 5.71548);
\coordinate (0.33333, -2.35702, -5.71548) at (0.33333, -2.35702, -5.71548);
\coordinate (0.33333, -3.77124, 4.89898) at (0.33333, -3.77124, 4.89898);
\coordinate (0.33333, -3.77124, -4.89898) at (0.33333, -3.77124, -4.89898);
\coordinate (-2.33333, 5.18545, 2.44949) at (-2.33333, 5.18545, 2.44949);
\coordinate (-3.66667, 4.71405, -1.63299) at (-3.66667, 4.71405, -1.63299);
\coordinate (-2.33333, 5.18545, -2.44949) at (-2.33333, 5.18545, -2.44949);
\coordinate (-3.66667, 4.71405, 1.63299) at (-3.66667, 4.71405, 1.63299);
\coordinate (-2.33333, -4.71405, -3.26599) at (-2.33333, -4.71405, -3.26599);
\coordinate (-2.33333, -0.47140, 5.71548) at (-2.33333, -0.47140, 5.71548);
\coordinate (-2.33333, -0.47140, -5.71548) at (-2.33333, -0.47140, -5.71548);
\coordinate (-2.33333, -4.71405, 3.26599) at (-2.33333, -4.71405, 3.26599);
\draw[edge,back] (-3.66667, -3.77124, -3.26599) -- (-3.66667, -0.94281, -4.89898);
\draw[edge,back] (5.66667, 2.35702, -0.81650) -- (0.33333, 6.12826, -0.81650);
\draw[edge,back] (-3.66667, -0.94281, -4.89898) -- (-3.66667, 4.71405, -1.63299);
\draw[edge,back] (-3.66667, -0.94281, -4.89898) -- (-2.33333, -0.47140, -5.71548);
\draw[edge,back] (0.33333, 6.12826, 0.81650) -- (0.33333, 6.12826, -0.81650);
\draw[edge,back] (0.33333, 6.12826, -0.81650) -- (-2.33333, 5.18545, -2.44949);
\draw[edge,back] (-3.66667, -0.94281, 4.89898) -- (-3.66667, 4.71405, 1.63299);
\draw[edge,back] (0.33333, -2.35702, -5.71548) -- (-2.33333, -0.47140, -5.71548);
\draw[edge,back] (-2.33333, 5.18545, 2.44949) -- (-3.66667, 4.71405, 1.63299);
\draw[edge,back] (-3.66667, 4.71405, -1.63299) -- (-2.33333, 5.18545, -2.44949);
\draw[edge,back] (-3.66667, 4.71405, -1.63299) -- (-3.66667, 4.71405, 1.63299);
\draw[edge,back] (-2.33333, 5.18545, -2.44949) -- (-2.33333, -0.47140, -5.71548);
\node[vertex] at (0.33333, 6.12826, -0.81650)     {};
\node[vertex] at (-2.33333, 5.18545, -2.44949)     {};
\node[vertex] at (-2.33333, -0.47140, -5.71548)     {};
\node[vertex] at (-3.66667, -0.94281, -4.89898)     {};
\node[vertex] at (-3.66667, 4.71405, -1.63299)     {};
\node[vertex] at (-3.66667, 4.71405, 1.63299)     {};
\fill[facet] (0.33333, -3.77124, -4.89898) -- (5.66667, -1.88562, -1.63299) -- (5.66667, -0.47140, -2.44949) -- (0.33333, -2.35702, -5.71548) -- cycle {};
\fill[facet] (-2.33333, -4.71405, 3.26599) -- (-3.66667, -3.77124, 3.26599) -- (-3.66667, -3.77124, -3.26599) -- (-2.33333, -4.71405, -3.26599) -- cycle {};
\fill[facet] (0.33333, -2.35702, 5.71548) -- (0.33333, -3.77124, 4.89898) -- (-2.33333, -4.71405, 3.26599) -- (-3.66667, -3.77124, 3.26599) -- (-3.66667, -0.94281, 4.89898) -- (-2.33333, -0.47140, 5.71548) -- cycle {};
\fill[facet] (-2.33333, -4.71405, 3.26599) -- (0.33333, -3.77124, 4.89898) -- (5.66667, -1.88562, 1.63299) -- (5.66667, -1.88562, -1.63299) -- (0.33333, -3.77124, -4.89898) -- (-2.33333, -4.71405, -3.26599) -- cycle {};
\fill[facet] (0.33333, -3.77124, 4.89898) -- (5.66667, -1.88562, 1.63299) -- (5.66667, -0.47140, 2.44949) -- (0.33333, -2.35702, 5.71548) -- cycle {};
\fill[facet] (-2.33333, -0.47140, 5.71548) -- (0.33333, -2.35702, 5.71548) -- (5.66667, -0.47140, 2.44949) -- (5.66667, 2.35702, 0.81650) -- (0.33333, 6.12826, 0.81650) -- (-2.33333, 5.18545, 2.44949) -- cycle {};
\fill[facet] (5.66667, -1.88562, -1.63299) -- (5.66667, -0.47140, -2.44949) -- (5.66667, 2.35702, -0.81650) -- (5.66667, 2.35702, 0.81650) -- (5.66667, -0.47140, 2.44949) -- (5.66667, -1.88562, 1.63299) -- cycle {};
\draw[edge] (-3.66667, -3.77124, -3.26599) -- (-3.66667, -3.77124, 3.26599);
\draw[edge] (-3.66667, -3.77124, -3.26599) -- (-2.33333, -4.71405, -3.26599);
\draw[edge] (5.66667, 2.35702, 0.81650) -- (5.66667, 2.35702, -0.81650);
\draw[edge] (5.66667, 2.35702, 0.81650) -- (5.66667, -0.47140, 2.44949);
\draw[edge] (5.66667, 2.35702, 0.81650) -- (0.33333, 6.12826, 0.81650);
\draw[edge] (5.66667, 2.35702, -0.81650) -- (5.66667, -0.47140, -2.44949);
\draw[edge] (-3.66667, -3.77124, 3.26599) -- (-3.66667, -0.94281, 4.89898);
\draw[edge] (-3.66667, -3.77124, 3.26599) -- (-2.33333, -4.71405, 3.26599);
\draw[edge] (5.66667, -0.47140, 2.44949) -- (5.66667, -1.88562, 1.63299);
\draw[edge] (5.66667, -0.47140, 2.44949) -- (0.33333, -2.35702, 5.71548);
\draw[edge] (5.66667, -0.47140, -2.44949) -- (5.66667, -1.88562, -1.63299);
\draw[edge] (5.66667, -0.47140, -2.44949) -- (0.33333, -2.35702, -5.71548);
\draw[edge] (5.66667, -1.88562, 1.63299) -- (5.66667, -1.88562, -1.63299);
\draw[edge] (5.66667, -1.88562, 1.63299) -- (0.33333, -3.77124, 4.89898);
\draw[edge] (5.66667, -1.88562, -1.63299) -- (0.33333, -3.77124, -4.89898);
\draw[edge] (0.33333, 6.12826, 0.81650) -- (-2.33333, 5.18545, 2.44949);
\draw[edge] (-3.66667, -0.94281, 4.89898) -- (-2.33333, -0.47140, 5.71548);
\draw[edge] (0.33333, -2.35702, 5.71548) -- (0.33333, -3.77124, 4.89898);
\draw[edge] (0.33333, -2.35702, 5.71548) -- (-2.33333, -0.47140, 5.71548);
\draw[edge] (0.33333, -2.35702, -5.71548) -- (0.33333, -3.77124, -4.89898);
\draw[edge] (0.33333, -3.77124, 4.89898) -- (-2.33333, -4.71405, 3.26599);
\draw[edge] (0.33333, -3.77124, -4.89898) -- (-2.33333, -4.71405, -3.26599);
\draw[edge] (-2.33333, 5.18545, 2.44949) -- (-2.33333, -0.47140, 5.71548);
\draw[edge] (-2.33333, -4.71405, -3.26599) -- (-2.33333, -4.71405, 3.26599);
\node[vertex] at (-3.66667, -3.77124, -3.26599)     {};
\node[vertex] at (5.66667, 2.35702, 0.81650)     {};
\node[vertex] at (5.66667, 2.35702, -0.81650)     {};
\node[vertex] at (-3.66667, -3.77124, 3.26599)     {};
\node[vertex] at (5.66667, -0.47140, 2.44949)     {};
\node[vertex] at (5.66667, -0.47140, -2.44949)     {};
\node[vertex] at (5.66667, -1.88562, 1.63299)     {};
\node[vertex] at (5.66667, -1.88562, -1.63299)     {};
\node[vertex] at (0.33333, 6.12826, 0.81650)     {};
\node[vertex] at (-3.66667, -0.94281, 4.89898)     {};
\node[vertex] at (0.33333, -2.35702, 5.71548)     {};
\node[vertex] at (0.33333, -2.35702, -5.71548)     {};
\node[vertex] at (0.33333, -3.77124, 4.89898)     {};
\node[vertex] at (0.33333, -3.77124, -4.89898)     {};
\node[vertex] at (-2.33333, 5.18545, 2.44949)     {};
\node[vertex] at (-2.33333, -4.71405, -3.26599)     {};
\node[vertex] at (-2.33333, -0.47140, 5.71548)     {};
\node[vertex] at (-2.33333, -4.71405, 3.26599)     {};
\end{tikzpicture} &
	\begin{tikzpicture}%
	[x={(-0.366215cm, -0.789554cm)},
	y={(0.235950cm, -0.590693cm)},
	z={(0.900119cm, -0.166391cm)},
	scale=.3,
	back/.style={very thin},
	edge/.style={color=blue, very thick},
	facet/.style={fill=blue,fill opacity=0},
	vertex/.style={inner sep=1pt,circle,fill=blue,thick},
	baseline=0]

\coordinate (-3.16667, -3.06413, -2.85774) at (-3.16667, -3.06413, -2.85774);
\coordinate (4.83333, 1.88562, 0.81650) at (4.83333, 1.88562, 0.81650);
\coordinate (4.83333, 1.88562, -0.81650) at (4.83333, 1.88562, -0.81650);
\coordinate (-3.16667, -3.06413, 3.26599) at (-3.16667, -3.06413, 3.26599);
\coordinate (4.83333, -0.23570, -2.04124) at (4.83333, -0.23570, -2.04124);
\coordinate (4.83333, -0.94281, 2.44949) at (4.83333, -0.94281, 2.44949);
\coordinate (4.83333, -1.64992, 2.04124) at (4.83333, -1.64992, 2.04124);
\coordinate (4.83333, -1.64992, -1.22474) at (4.83333, -1.64992, -1.22474);
\coordinate (0.50000, -2.47487, 5.10310) at (0.50000, -2.47487, 5.10310);
\coordinate (0.50000, -3.18198, 4.69486) at (0.50000, -3.18198, 4.69486);
\coordinate (0.16667, -1.88562, -4.89898) at (0.16667, -1.88562, -4.89898);
\coordinate (-3.16667, -1.29636, 4.28661) at (-3.16667, -1.29636, 4.28661);
\coordinate (-3.16667, -0.23570, -4.49073) at (-3.16667, -0.23570, -4.49073);
\coordinate (0.16667, -3.29983, -4.08248) at (0.16667, -3.29983, -4.08248);
\coordinate (-0.16667, 5.42115, 0.81650) at (-0.16667, 5.42115, 0.81650);
\coordinate (-0.16667, 5.42115, -0.81650) at (-0.16667, 5.42115, -0.81650);
\coordinate (-1.83333, 4.83190, 1.83712) at (-1.83333, 4.83190, 1.83712);
\coordinate (-1.83333, -0.82496, 5.10310) at (-1.83333, -0.82496, 5.10310);
\coordinate (-1.83333, -4.00694, 3.26599) at (-1.83333, -4.00694, 3.26599);
\coordinate (-1.83333, -4.00694, -2.85774) at (-1.83333, -4.00694, -2.85774);
\coordinate (-3.16667, 4.36049, -1.83712) at (-3.16667, 4.36049, -1.83712);
\coordinate (-2.50000, 4.59619, -2.24537) at (-2.50000, 4.59619, -2.24537);
\coordinate (-2.50000, 0.00000, -4.89898) at (-2.50000, 0.00000, -4.89898);
\coordinate (-3.16667, 4.36049, 1.02062) at (-3.16667, 4.36049, 1.02062);
\draw[edge,back] (-3.16667, -3.06413, -2.85774) -- (-3.16667, -0.23570, -4.49073);
\draw[edge,back] (4.83333, 1.88562, -0.81650) -- (-0.16667, 5.42115, -0.81650);
\draw[edge,back] (0.16667, -1.88562, -4.89898) -- (-2.50000, 0.00000, -4.89898);
\draw[edge,back] (-3.16667, -1.29636, 4.28661) -- (-3.16667, 4.36049, 1.02062);
\draw[edge,back] (-3.16667, -0.23570, -4.49073) -- (-3.16667, 4.36049, -1.83712);
\draw[edge,back] (-3.16667, -0.23570, -4.49073) -- (-2.50000, 0.00000, -4.89898);
\draw[edge,back] (-0.16667, 5.42115, 0.81650) -- (-0.16667, 5.42115, -0.81650);
\draw[edge,back] (-0.16667, 5.42115, -0.81650) -- (-2.50000, 4.59619, -2.24537);
\draw[edge,back] (-1.83333, 4.83190, 1.83712) -- (-3.16667, 4.36049, 1.02062);
\draw[edge,back] (-3.16667, 4.36049, -1.83712) -- (-2.50000, 4.59619, -2.24537);
\draw[edge,back] (-3.16667, 4.36049, -1.83712) -- (-3.16667, 4.36049, 1.02062);
\draw[edge,back] (-2.50000, 4.59619, -2.24537) -- (-2.50000, 0.00000, -4.89898);
\node[vertex] at (-0.16667, 5.42115, -0.81650)     {};
\node[vertex] at (-2.50000, 4.59619, -2.24537)     {};
\node[vertex] at (-2.50000, 0.00000, -4.89898)     {};
\node[vertex] at (-3.16667, -0.23570, -4.49073)     {};
\node[vertex] at (-3.16667, 4.36049, -1.83712)     {};
\node[vertex] at (-3.16667, 4.36049, 1.02062)     {};
\fill[facet] (0.50000, -3.18198, 4.69486) -- (4.83333, -1.64992, 2.04124) -- (4.83333, -0.94281, 2.44949) -- (0.50000, -2.47487, 5.10310) -- cycle {};
\fill[facet] (-1.83333, -4.00694, -2.85774) -- (-3.16667, -3.06413, -2.85774) -- (-3.16667, -3.06413, 3.26599) -- (-1.83333, -4.00694, 3.26599) -- cycle {};
\fill[facet] (0.50000, -2.47487, 5.10310) -- (0.50000, -3.18198, 4.69486) -- (-1.83333, -4.00694, 3.26599) -- (-3.16667, -3.06413, 3.26599) -- (-3.16667, -1.29636, 4.28661) -- (-1.83333, -0.82496, 5.10310) -- cycle {};
\fill[facet] (-1.83333, -4.00694, -2.85774) -- (0.16667, -3.29983, -4.08248) -- (4.83333, -1.64992, -1.22474) -- (4.83333, -1.64992, 2.04124) -- (0.50000, -3.18198, 4.69486) -- (-1.83333, -4.00694, 3.26599) -- cycle {};
\fill[facet] (0.16667, -3.29983, -4.08248) -- (4.83333, -1.64992, -1.22474) -- (4.83333, -0.23570, -2.04124) -- (0.16667, -1.88562, -4.89898) -- cycle {};
\fill[facet] (-1.83333, -0.82496, 5.10310) -- (0.50000, -2.47487, 5.10310) -- (4.83333, -0.94281, 2.44949) -- (4.83333, 1.88562, 0.81650) -- (-0.16667, 5.42115, 0.81650) -- (-1.83333, 4.83190, 1.83712) -- cycle {};
\fill[facet] (4.83333, -1.64992, -1.22474) -- (4.83333, -0.23570, -2.04124) -- (4.83333, 1.88562, -0.81650) -- (4.83333, 1.88562, 0.81650) -- (4.83333, -0.94281, 2.44949) -- (4.83333, -1.64992, 2.04124) -- cycle {};
\draw[edge] (-3.16667, -3.06413, -2.85774) -- (-3.16667, -3.06413, 3.26599);
\draw[edge] (-3.16667, -3.06413, -2.85774) -- (-1.83333, -4.00694, -2.85774);
\draw[edge] (4.83333, 1.88562, 0.81650) -- (4.83333, 1.88562, -0.81650);
\draw[edge] (4.83333, 1.88562, 0.81650) -- (4.83333, -0.94281, 2.44949);
\draw[edge] (4.83333, 1.88562, 0.81650) -- (-0.16667, 5.42115, 0.81650);
\draw[edge] (4.83333, 1.88562, -0.81650) -- (4.83333, -0.23570, -2.04124);
\draw[edge] (-3.16667, -3.06413, 3.26599) -- (-3.16667, -1.29636, 4.28661);
\draw[edge] (-3.16667, -3.06413, 3.26599) -- (-1.83333, -4.00694, 3.26599);
\draw[edge] (4.83333, -0.23570, -2.04124) -- (4.83333, -1.64992, -1.22474);
\draw[edge] (4.83333, -0.23570, -2.04124) -- (0.16667, -1.88562, -4.89898);
\draw[edge] (4.83333, -0.94281, 2.44949) -- (4.83333, -1.64992, 2.04124);
\draw[edge] (4.83333, -0.94281, 2.44949) -- (0.50000, -2.47487, 5.10310);
\draw[edge] (4.83333, -1.64992, 2.04124) -- (4.83333, -1.64992, -1.22474);
\draw[edge] (4.83333, -1.64992, 2.04124) -- (0.50000, -3.18198, 4.69486);
\draw[edge] (4.83333, -1.64992, -1.22474) -- (0.16667, -3.29983, -4.08248);
\draw[edge] (0.50000, -2.47487, 5.10310) -- (0.50000, -3.18198, 4.69486);
\draw[edge] (0.50000, -2.47487, 5.10310) -- (-1.83333, -0.82496, 5.10310);
\draw[edge] (0.50000, -3.18198, 4.69486) -- (-1.83333, -4.00694, 3.26599);
\draw[edge] (0.16667, -1.88562, -4.89898) -- (0.16667, -3.29983, -4.08248);
\draw[edge] (-3.16667, -1.29636, 4.28661) -- (-1.83333, -0.82496, 5.10310);
\draw[edge] (0.16667, -3.29983, -4.08248) -- (-1.83333, -4.00694, -2.85774);
\draw[edge] (-0.16667, 5.42115, 0.81650) -- (-1.83333, 4.83190, 1.83712);
\draw[edge] (-1.83333, 4.83190, 1.83712) -- (-1.83333, -0.82496, 5.10310);
\draw[edge] (-1.83333, -4.00694, 3.26599) -- (-1.83333, -4.00694, -2.85774);
\node[vertex] at (-3.16667, -3.06413, -2.85774)     {};
\node[vertex] at (4.83333, 1.88562, 0.81650)     {};
\node[vertex] at (4.83333, 1.88562, -0.81650)     {};
\node[vertex] at (-3.16667, -3.06413, 3.26599)     {};
\node[vertex] at (4.83333, -0.23570, -2.04124)     {};
\node[vertex] at (4.83333, -0.94281, 2.44949)     {};
\node[vertex] at (4.83333, -1.64992, 2.04124)     {};
\node[vertex] at (4.83333, -1.64992, -1.22474)     {};
\node[vertex] at (0.50000, -2.47487, 5.10310)     {};
\node[vertex] at (0.50000, -3.18198, 4.69486)     {};
\node[vertex] at (0.16667, -1.88562, -4.89898)     {};
\node[vertex] at (-3.16667, -1.29636, 4.28661)     {};
\node[vertex] at (0.16667, -3.29983, -4.08248)     {};
\node[vertex] at (-0.16667, 5.42115, 0.81650)     {};
\node[vertex] at (-1.83333, 4.83190, 1.83712)     {};
\node[vertex] at (-1.83333, -0.82496, 5.10310)     {};
\node[vertex] at (-1.83333, -4.00694, 3.26599)     {};
\node[vertex] at (-1.83333, -4.00694, -2.85774)     {};
\end{tikzpicture} &
	\begin{tikzpicture}%
	[x={(-0.366215cm, -0.789554cm)},
	y={(0.235950cm, -0.590693cm)},
	z={(0.900119cm, -0.166391cm)},
	scale=.3,
	back/.style={very thin},
	edge/.style={color=blue, very thick},
	facet/.style={fill=blue,fill opacity=0},
	vertex/.style={inner sep=1pt,circle,fill=blue,thick},
	baseline=0]

\coordinate (4.00000, 1.41421, 0.81650) at (4.00000, 1.41421, 0.81650);
\coordinate (-2.66667, -2.35702, 3.26599) at (-2.66667, -2.35702, 3.26599);
\coordinate (4.00000, 1.41421, -0.81650) at (4.00000, 1.41421, -0.81650);
\coordinate (4.00000, 0.00000, -1.63299) at (4.00000, 0.00000, -1.63299);
\coordinate (4.00000, -1.41421, 2.44949) at (4.00000, -1.41421, 2.44949);
\coordinate (-2.66667, -1.64992, 3.67423) at (-2.66667, -1.64992, 3.67423);
\coordinate (4.00000, -1.41421, -0.81650) at (4.00000, -1.41421, -0.81650);
\coordinate (0.66667, -2.59272, 4.49073) at (0.66667, -2.59272, 4.49073);
\coordinate (0.00000, -1.41421, -4.08248) at (0.00000, -1.41421, -4.08248);
\coordinate (0.00000, -2.82843, -3.26599) at (0.00000, -2.82843, -3.26599);
\coordinate (-0.66667, 4.71405, 0.81650) at (-0.66667, 4.71405, 0.81650);
\coordinate (-2.66667, 4.00694, -2.04124) at (-2.66667, 4.00694, -2.04124);
\coordinate (-0.66667, 4.71405, -0.81650) at (-0.66667, 4.71405, -0.81650);
\coordinate (-1.33333, -3.29983, -2.44949) at (-1.33333, -3.29983, -2.44949);
\coordinate (-2.66667, 4.00694, 0.40825) at (-2.66667, 4.00694, 0.40825);
\coordinate (-1.33333, 4.47834, 1.22474) at (-1.33333, 4.47834, 1.22474);
\coordinate (-2.66667, 0.47140, -4.08248) at (-2.66667, 0.47140, -4.08248);
\coordinate (-1.33333, -1.17851, 4.49073) at (-1.33333, -1.17851, 4.49073);
\coordinate (-2.66667, -2.35702, -2.44949) at (-2.66667, -2.35702, -2.44949);
\coordinate (-1.33333, -3.29983, 3.26599) at (-1.33333, -3.29983, 3.26599);
\draw[edge,back] (4.00000, 1.41421, -0.81650) -- (-0.66667, 4.71405, -0.81650);
\draw[edge,back] (-2.66667, -1.64992, 3.67423) -- (-2.66667, 4.00694, 0.40825);
\draw[edge,back] (0.00000, -1.41421, -4.08248) -- (-2.66667, 0.47140, -4.08248);
\draw[edge,back] (-0.66667, 4.71405, 0.81650) -- (-0.66667, 4.71405, -0.81650);
\draw[edge,back] (-2.66667, 4.00694, -2.04124) -- (-0.66667, 4.71405, -0.81650);
\draw[edge,back] (-2.66667, 4.00694, -2.04124) -- (-2.66667, 4.00694, 0.40825);
\draw[edge,back] (-2.66667, 4.00694, -2.04124) -- (-2.66667, 0.47140, -4.08248);
\draw[edge,back] (-2.66667, 4.00694, 0.40825) -- (-1.33333, 4.47834, 1.22474);
\draw[edge,back] (-2.66667, 0.47140, -4.08248) -- (-2.66667, -2.35702, -2.44949);
\node[vertex] at (-2.66667, 4.00694, -2.04124)     {};
\node[vertex] at (-0.66667, 4.71405, -0.81650)     {};
\node[vertex] at (-2.66667, 0.47140, -4.08248)     {};
\node[vertex] at (-2.66667, 4.00694, 0.40825)     {};
\fill[facet] (-1.33333, -3.29983, 3.26599) -- (0.66667, -2.59272, 4.49073) -- (4.00000, -1.41421, 2.44949) -- (4.00000, -1.41421, -0.81650) -- (0.00000, -2.82843, -3.26599) -- (-1.33333, -3.29983, -2.44949) -- cycle {};
\fill[facet] (0.66667, -2.59272, 4.49073) -- (-1.33333, -3.29983, 3.26599) -- (-2.66667, -2.35702, 3.26599) -- (-2.66667, -1.64992, 3.67423) -- (-1.33333, -1.17851, 4.49073) -- cycle {};
\fill[facet] (-1.33333, -3.29983, -2.44949) -- (-1.33333, -3.29983, 3.26599) -- (-2.66667, -2.35702, 3.26599) -- (-2.66667, -2.35702, -2.44949) -- cycle {};
\fill[facet] (0.00000, -2.82843, -3.26599) -- (4.00000, -1.41421, -0.81650) -- (4.00000, 0.00000, -1.63299) -- (0.00000, -1.41421, -4.08248) -- cycle {};
\fill[facet] (-1.33333, -1.17851, 4.49073) -- (0.66667, -2.59272, 4.49073) -- (4.00000, -1.41421, 2.44949) -- (4.00000, 1.41421, 0.81650) -- (-0.66667, 4.71405, 0.81650) -- (-1.33333, 4.47834, 1.22474) -- cycle {};
\fill[facet] (4.00000, -1.41421, -0.81650) -- (4.00000, 0.00000, -1.63299) -- (4.00000, 1.41421, -0.81650) -- (4.00000, 1.41421, 0.81650) -- (4.00000, -1.41421, 2.44949) -- cycle {};
\draw[edge] (4.00000, 1.41421, 0.81650) -- (4.00000, 1.41421, -0.81650);
\draw[edge] (4.00000, 1.41421, 0.81650) -- (4.00000, -1.41421, 2.44949);
\draw[edge] (4.00000, 1.41421, 0.81650) -- (-0.66667, 4.71405, 0.81650);
\draw[edge] (-2.66667, -2.35702, 3.26599) -- (-2.66667, -1.64992, 3.67423);
\draw[edge] (-2.66667, -2.35702, 3.26599) -- (-2.66667, -2.35702, -2.44949);
\draw[edge] (-2.66667, -2.35702, 3.26599) -- (-1.33333, -3.29983, 3.26599);
\draw[edge] (4.00000, 1.41421, -0.81650) -- (4.00000, 0.00000, -1.63299);
\draw[edge] (4.00000, 0.00000, -1.63299) -- (4.00000, -1.41421, -0.81650);
\draw[edge] (4.00000, 0.00000, -1.63299) -- (0.00000, -1.41421, -4.08248);
\draw[edge] (4.00000, -1.41421, 2.44949) -- (4.00000, -1.41421, -0.81650);
\draw[edge] (4.00000, -1.41421, 2.44949) -- (0.66667, -2.59272, 4.49073);
\draw[edge] (-2.66667, -1.64992, 3.67423) -- (-1.33333, -1.17851, 4.49073);
\draw[edge] (4.00000, -1.41421, -0.81650) -- (0.00000, -2.82843, -3.26599);
\draw[edge] (0.66667, -2.59272, 4.49073) -- (-1.33333, -1.17851, 4.49073);
\draw[edge] (0.66667, -2.59272, 4.49073) -- (-1.33333, -3.29983, 3.26599);
\draw[edge] (0.00000, -1.41421, -4.08248) -- (0.00000, -2.82843, -3.26599);
\draw[edge] (0.00000, -2.82843, -3.26599) -- (-1.33333, -3.29983, -2.44949);
\draw[edge] (-0.66667, 4.71405, 0.81650) -- (-1.33333, 4.47834, 1.22474);
\draw[edge] (-1.33333, -3.29983, -2.44949) -- (-2.66667, -2.35702, -2.44949);
\draw[edge] (-1.33333, -3.29983, -2.44949) -- (-1.33333, -3.29983, 3.26599);
\draw[edge] (-1.33333, 4.47834, 1.22474) -- (-1.33333, -1.17851, 4.49073);
\node[vertex] at (4.00000, 1.41421, 0.81650)     {};
\node[vertex] at (-2.66667, -2.35702, 3.26599)     {};
\node[vertex] at (4.00000, 1.41421, -0.81650)     {};
\node[vertex] at (4.00000, 0.00000, -1.63299)     {};
\node[vertex] at (4.00000, -1.41421, 2.44949)     {};
\node[vertex] at (-2.66667, -1.64992, 3.67423)     {};
\node[vertex] at (4.00000, -1.41421, -0.81650)     {};
\node[vertex] at (0.66667, -2.59272, 4.49073)     {};
\node[vertex] at (0.00000, -1.41421, -4.08248)     {};
\node[vertex] at (0.00000, -2.82843, -3.26599)     {};
\node[vertex] at (-0.66667, 4.71405, 0.81650)     {};
\node[vertex] at (-1.33333, -3.29983, -2.44949)     {};
\node[vertex] at (-1.33333, 4.47834, 1.22474)     {};
\node[vertex] at (-1.33333, -1.17851, 4.49073)     {};
\node[vertex] at (-2.66667, -2.35702, -2.44949)     {};
\node[vertex] at (-1.33333, -3.29983, 3.26599)     {};
\end{tikzpicture} &
	\begin{tikzpicture}%
	[x={(-0.366215cm, -0.789554cm)},
	y={(0.235950cm, -0.590693cm)},
	z={(0.900119cm, -0.166391cm)},
	scale=.3,
	back/.style={very thin},
	edge/.style={color=blue, very thick},
	facet/.style={fill=blue,fill opacity=0},
	vertex/.style={inner sep=1pt,circle,fill=blue,thick},
	baseline=0]

\coordinate (-2.33333, -1.88562, -1.63299) at (-2.33333, -1.88562, -1.63299);
\coordinate (3.00000, 1.41421, 0.81650) at (3.00000, 1.41421, 0.81650);
\coordinate (3.00000, 1.41421, -0.81650) at (3.00000, 1.41421, -0.81650);
\coordinate (-2.33333, -1.88562, 3.67423) at (-2.33333, -1.88562, 3.67423);
\coordinate (3.00000, 0.00000, -1.63299) at (3.00000, 0.00000, -1.63299);
\coordinate (3.00000, -1.41421, 2.44949) at (3.00000, -1.41421, 2.44949);
\coordinate (3.00000, -1.41421, -0.81650) at (3.00000, -1.41421, -0.81650);
\coordinate (-2.33333, -1.53206, 3.87836) at (-2.33333, -1.53206, 3.87836);
\coordinate (-2.33333, 0.94281, -3.26599) at (-2.33333, 0.94281, -3.26599);
\coordinate (0.33333, -0.94281, -3.26599) at (0.33333, -0.94281, -3.26599);
\coordinate (0.33333, -2.35702, -2.44949) at (0.33333, -2.35702, -2.44949);
\coordinate (0.00000, -2.47487, 4.28661) at (0.00000, -2.47487, 4.28661);
\coordinate (-1.00000, -2.82843, 3.67423) at (-1.00000, -2.82843, 3.67423);
\coordinate (-1.00000, -2.82843, -1.63299) at (-1.00000, -2.82843, -1.63299);
\coordinate (-1.33333, 4.47834, 0.81650) at (-1.33333, 4.47834, 0.81650);
\coordinate (-1.33333, 4.47834, -0.81650) at (-1.33333, 4.47834, -0.81650);
\coordinate (-2.33333, 4.12479, -1.42887) at (-2.33333, 4.12479, -1.42887);
\coordinate (-1.66667, 4.36049, 1.02062) at (-1.66667, 4.36049, 1.02062);
\coordinate (-1.66667, -1.29636, 4.28661) at (-1.66667, -1.29636, 4.28661);
\coordinate (-2.33333, 4.12479, 0.61237) at (-2.33333, 4.12479, 0.61237);
\draw[edge,back] (-2.33333, -1.88562, -1.63299) -- (-2.33333, 0.94281, -3.26599);
\draw[edge,back] (3.00000, 1.41421, -0.81650) -- (-1.33333, 4.47834, -0.81650);
\draw[edge,back] (-2.33333, -1.53206, 3.87836) -- (-2.33333, 4.12479, 0.61237);
\draw[edge,back] (-2.33333, 0.94281, -3.26599) -- (0.33333, -0.94281, -3.26599);
\draw[edge,back] (-2.33333, 0.94281, -3.26599) -- (-2.33333, 4.12479, -1.42887);
\draw[edge,back] (-1.33333, 4.47834, 0.81650) -- (-1.33333, 4.47834, -0.81650);
\draw[edge,back] (-1.33333, 4.47834, -0.81650) -- (-2.33333, 4.12479, -1.42887);
\draw[edge,back] (-2.33333, 4.12479, -1.42887) -- (-2.33333, 4.12479, 0.61237);
\draw[edge,back] (-1.66667, 4.36049, 1.02062) -- (-2.33333, 4.12479, 0.61237);
\node[vertex] at (-2.33333, 0.94281, -3.26599)     {};
\node[vertex] at (-1.33333, 4.47834, -0.81650)     {};
\node[vertex] at (-2.33333, 4.12479, -1.42887)     {};
\node[vertex] at (-2.33333, 4.12479, 0.61237)     {};
\fill[facet] (-1.00000, -2.82843, -1.63299) -- (0.33333, -2.35702, -2.44949) -- (3.00000, -1.41421, -0.81650) -- (3.00000, -1.41421, 2.44949) -- (0.00000, -2.47487, 4.28661) -- (-1.00000, -2.82843, 3.67423) -- cycle {};
\fill[facet] (0.00000, -2.47487, 4.28661) -- (-1.66667, -1.29636, 4.28661) -- (-2.33333, -1.53206, 3.87836) -- (-2.33333, -1.88562, 3.67423) -- (-1.00000, -2.82843, 3.67423) -- cycle {};
\fill[facet] (-1.00000, -2.82843, -1.63299) -- (-2.33333, -1.88562, -1.63299) -- (-2.33333, -1.88562, 3.67423) -- (-1.00000, -2.82843, 3.67423) -- cycle {};
\fill[facet] (0.33333, -2.35702, -2.44949) -- (3.00000, -1.41421, -0.81650) -- (3.00000, 0.00000, -1.63299) -- (0.33333, -0.94281, -3.26599) -- cycle {};
\fill[facet] (-1.66667, -1.29636, 4.28661) -- (0.00000, -2.47487, 4.28661) -- (3.00000, -1.41421, 2.44949) -- (3.00000, 1.41421, 0.81650) -- (-1.33333, 4.47834, 0.81650) -- (-1.66667, 4.36049, 1.02062) -- cycle {};
\fill[facet] (3.00000, -1.41421, -0.81650) -- (3.00000, 0.00000, -1.63299) -- (3.00000, 1.41421, -0.81650) -- (3.00000, 1.41421, 0.81650) -- (3.00000, -1.41421, 2.44949) -- cycle {};
\draw[edge] (-2.33333, -1.88562, -1.63299) -- (-2.33333, -1.88562, 3.67423);
\draw[edge] (-2.33333, -1.88562, -1.63299) -- (-1.00000, -2.82843, -1.63299);
\draw[edge] (3.00000, 1.41421, 0.81650) -- (3.00000, 1.41421, -0.81650);
\draw[edge] (3.00000, 1.41421, 0.81650) -- (3.00000, -1.41421, 2.44949);
\draw[edge] (3.00000, 1.41421, 0.81650) -- (-1.33333, 4.47834, 0.81650);
\draw[edge] (3.00000, 1.41421, -0.81650) -- (3.00000, 0.00000, -1.63299);
\draw[edge] (-2.33333, -1.88562, 3.67423) -- (-2.33333, -1.53206, 3.87836);
\draw[edge] (-2.33333, -1.88562, 3.67423) -- (-1.00000, -2.82843, 3.67423);
\draw[edge] (3.00000, 0.00000, -1.63299) -- (3.00000, -1.41421, -0.81650);
\draw[edge] (3.00000, 0.00000, -1.63299) -- (0.33333, -0.94281, -3.26599);
\draw[edge] (3.00000, -1.41421, 2.44949) -- (3.00000, -1.41421, -0.81650);
\draw[edge] (3.00000, -1.41421, 2.44949) -- (0.00000, -2.47487, 4.28661);
\draw[edge] (3.00000, -1.41421, -0.81650) -- (0.33333, -2.35702, -2.44949);
\draw[edge] (-2.33333, -1.53206, 3.87836) -- (-1.66667, -1.29636, 4.28661);
\draw[edge] (0.33333, -0.94281, -3.26599) -- (0.33333, -2.35702, -2.44949);
\draw[edge] (0.33333, -2.35702, -2.44949) -- (-1.00000, -2.82843, -1.63299);
\draw[edge] (0.00000, -2.47487, 4.28661) -- (-1.00000, -2.82843, 3.67423);
\draw[edge] (0.00000, -2.47487, 4.28661) -- (-1.66667, -1.29636, 4.28661);
\draw[edge] (-1.00000, -2.82843, 3.67423) -- (-1.00000, -2.82843, -1.63299);
\draw[edge] (-1.33333, 4.47834, 0.81650) -- (-1.66667, 4.36049, 1.02062);
\draw[edge] (-1.66667, 4.36049, 1.02062) -- (-1.66667, -1.29636, 4.28661);
\node[vertex] at (-2.33333, -1.88562, -1.63299)     {};
\node[vertex] at (3.00000, 1.41421, 0.81650)     {};
\node[vertex] at (3.00000, 1.41421, -0.81650)     {};
\node[vertex] at (-2.33333, -1.88562, 3.67423)     {};
\node[vertex] at (3.00000, 0.00000, -1.63299)     {};
\node[vertex] at (3.00000, -1.41421, 2.44949)     {};
\node[vertex] at (3.00000, -1.41421, -0.81650)     {};
\node[vertex] at (-2.33333, -1.53206, 3.87836)     {};
\node[vertex] at (0.33333, -0.94281, -3.26599)     {};
\node[vertex] at (0.33333, -2.35702, -2.44949)     {};
\node[vertex] at (0.00000, -2.47487, 4.28661)     {};
\node[vertex] at (-1.00000, -2.82843, 3.67423)     {};
\node[vertex] at (-1.00000, -2.82843, -1.63299)     {};
\node[vertex] at (-1.33333, 4.47834, 0.81650)     {};
\node[vertex] at (-1.66667, 4.36049, 1.02062)     {};
\node[vertex] at (-1.66667, -1.29636, 4.28661)     {};
\end{tikzpicture} &
	\begin{tikzpicture}%
	[x={(-0.366215cm, -0.789554cm)},
	y={(0.235950cm, -0.590693cm)},
	z={(0.900119cm, -0.166391cm)},
	scale=.3,
	back/.style={very thin},
	edge/.style={color=blue, very thick},
	facet/.style={fill=blue,fill opacity=0},
	vertex/.style={inner sep=1pt,circle,fill=blue,thick},
	baseline=0]

\coordinate (-2.00000, -1.41421, -0.81650) at (-2.00000, -1.41421, -0.81650);
\coordinate (-2.00000, -1.41421, 4.08248) at (-2.00000, -1.41421, 4.08248);
\coordinate (-2.00000, 1.41421, -2.44949) at (-2.00000, 1.41421, -2.44949);
\coordinate (-2.00000, 4.24264, -0.81650) at (-2.00000, 4.24264, -0.81650);
\coordinate (-2.00000, 4.24264, 0.81650) at (-2.00000, 4.24264, 0.81650);
\coordinate (-0.66667, -2.35702, -0.81650) at (-0.66667, -2.35702, -0.81650);
\coordinate (-0.66667, -2.35702, 4.08248) at (-0.66667, -2.35702, 4.08248);
\coordinate (0.66667, -1.88562, -1.63299) at (0.66667, -1.88562, -1.63299);
\coordinate (0.66667, -0.47140, -2.44949) at (0.66667, -0.47140, -2.44949);
\coordinate (2.00000, -1.41421, -0.81650) at (2.00000, -1.41421, -0.81650);
\coordinate (2.00000, -1.41421, 2.44949) at (2.00000, -1.41421, 2.44949);
\coordinate (2.00000, 0.00000, -1.63299) at (2.00000, 0.00000, -1.63299);
\coordinate (2.00000, 1.41421, -0.81650) at (2.00000, 1.41421, -0.81650);
\coordinate (2.00000, 1.41421, 0.81650) at (2.00000, 1.41421, 0.81650);
\draw[edge,back] (-2.00000, -1.41421, -0.81650) -- (-2.00000, 1.41421, -2.44949);
\draw[edge,back] (-2.00000, 1.41421, -2.44949) -- (-2.00000, 4.24264, -0.81650);
\draw[edge,back] (-2.00000, 1.41421, -2.44949) -- (0.66667, -0.47140, -2.44949);
\draw[edge,back] (-2.00000, 4.24264, -0.81650) -- (-2.00000, 4.24264, 0.81650);
\draw[edge,back] (-2.00000, 4.24264, -0.81650) -- (2.00000, 1.41421, -0.81650);
\node[vertex] at (-2.00000, 1.41421, -2.44949)     {};
\node[vertex] at (-2.00000, 4.24264, -0.81650)     {};
\fill[facet] (2.00000, -1.41421, 2.44949) -- (-0.66667, -2.35702, 4.08248) -- (-0.66667, -2.35702, -0.81650) -- (0.66667, -1.88562, -1.63299) -- (2.00000, -1.41421, -0.81650) -- cycle {};
\fill[facet] (-0.66667, -2.35702, 4.08248) -- (-2.00000, -1.41421, 4.08248) -- (-2.00000, -1.41421, -0.81650) -- (-0.66667, -2.35702, -0.81650) -- cycle {};
\fill[facet] (2.00000, 0.00000, -1.63299) -- (0.66667, -0.47140, -2.44949) -- (0.66667, -1.88562, -1.63299) -- (2.00000, -1.41421, -0.81650) -- cycle {};
\fill[facet] (2.00000, 1.41421, 0.81650) -- (-2.00000, 4.24264, 0.81650) -- (-2.00000, -1.41421, 4.08248) -- (-0.66667, -2.35702, 4.08248) -- (2.00000, -1.41421, 2.44949) -- cycle {};
\fill[facet] (2.00000, 1.41421, 0.81650) -- (2.00000, -1.41421, 2.44949) -- (2.00000, -1.41421, -0.81650) -- (2.00000, 0.00000, -1.63299) -- (2.00000, 1.41421, -0.81650) -- cycle {};
\draw[edge] (-2.00000, -1.41421, -0.81650) -- (-2.00000, -1.41421, 4.08248);
\draw[edge] (-2.00000, -1.41421, -0.81650) -- (-0.66667, -2.35702, -0.81650);
\draw[edge] (-2.00000, -1.41421, 4.08248) -- (-2.00000, 4.24264, 0.81650);
\draw[edge] (-2.00000, -1.41421, 4.08248) -- (-0.66667, -2.35702, 4.08248);
\draw[edge] (-2.00000, 4.24264, 0.81650) -- (2.00000, 1.41421, 0.81650);
\draw[edge] (-0.66667, -2.35702, -0.81650) -- (-0.66667, -2.35702, 4.08248);
\draw[edge] (-0.66667, -2.35702, -0.81650) -- (0.66667, -1.88562, -1.63299);
\draw[edge] (-0.66667, -2.35702, 4.08248) -- (2.00000, -1.41421, 2.44949);
\draw[edge] (0.66667, -1.88562, -1.63299) -- (0.66667, -0.47140, -2.44949);
\draw[edge] (0.66667, -1.88562, -1.63299) -- (2.00000, -1.41421, -0.81650);
\draw[edge] (0.66667, -0.47140, -2.44949) -- (2.00000, 0.00000, -1.63299);
\draw[edge] (2.00000, -1.41421, -0.81650) -- (2.00000, -1.41421, 2.44949);
\draw[edge] (2.00000, -1.41421, -0.81650) -- (2.00000, 0.00000, -1.63299);
\draw[edge] (2.00000, -1.41421, 2.44949) -- (2.00000, 1.41421, 0.81650);
\draw[edge] (2.00000, 0.00000, -1.63299) -- (2.00000, 1.41421, -0.81650);
\draw[edge] (2.00000, 1.41421, -0.81650) -- (2.00000, 1.41421, 0.81650);
\node[vertex] at (-2.00000, -1.41421, -0.81650)     {};
\node[vertex] at (-2.00000, -1.41421, 4.08248)     {};
\node[vertex] at (-2.00000, 4.24264, 0.81650)     {};
\node[vertex] at (-0.66667, -2.35702, -0.81650)     {};
\node[vertex] at (-0.66667, -2.35702, 4.08248)     {};
\node[vertex] at (0.66667, -1.88562, -1.63299)     {};
\node[vertex] at (0.66667, -0.47140, -2.44949)     {};
\node[vertex] at (2.00000, -1.41421, -0.81650)     {};
\node[vertex] at (2.00000, -1.41421, 2.44949)     {};
\node[vertex] at (2.00000, 0.00000, -1.63299)     {};
\node[vertex] at (2.00000, 1.41421, -0.81650)     {};
\node[vertex] at (2.00000, 1.41421, 0.81650)     {};
\end{tikzpicture} \\[2cm]
	permutahedron &
	&
	cyclohedron &
	&
	associahedron
\end{tabular}}
	\caption{A sequence of polytope deformations, from the permutahedron, through the cyclohedron, to the associahedron.}
	\label{fig:deformations}
\end{figure}
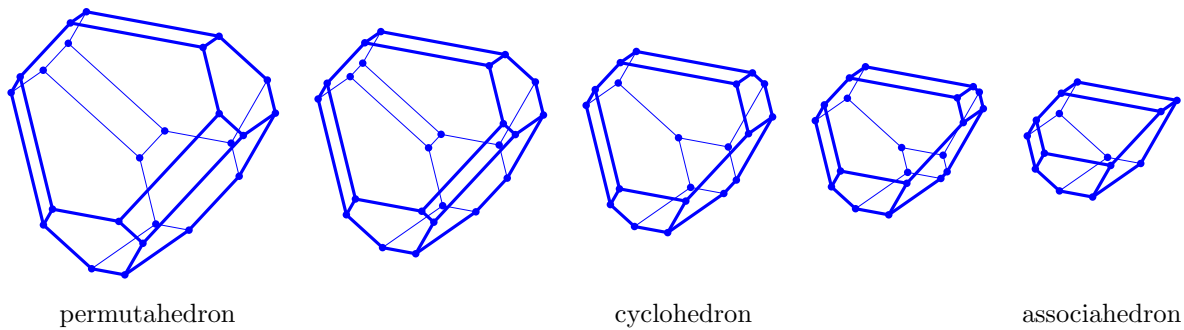

This paper focuses on some specific deformed permutahedra generalizing the associahedra, namely the graph associahedra and nestohedra.
Graph associahedra were defined by M.~Carr and S.~Devadoss~\cite{CarrDevadoss} in connection to C.~De Concini and C.~Procesi's wonderful arrangements~\cite{DeConciniProcesi}.
For a given graph~$\graphG$, the \defn{$\graphG$-associahedron}~$\Asso[\graphG]$ is a simple polytope whose combinatorial structure encodes the connected induced subgraphs of~$\graphG$ and their nested structure.
More precisely, the $\graphG$-associahedron is a polytopal realization of the \defn{nested complex} of~$\graphG$, defined as the simplicial complex of all collections of \defn{tubes} (connected induced subgraphs) of~$\graphG$ which are pairwise \defn{compatible} (either nested, or disjoint and non-adjacent).
As illustrated in \cref{fig:specialGraphAssociahedra}, the graph associahedra of certain special families of graphs coincide with well-known families of polytopes: complete graph associahedra are permutahedra, path associahedra are classical associahedra, cycle associahedra are cyclohedra, and star associahedra are stellohedra.
Graph associahedra were extended to \defn{nestohedra}, which are simple polytopes realizing the nested complex of arbitrary \defn{building sets}~\cite{Postnikov, FeichtnerSturmfels}.
Graph associahedra and nestohedra have been constructed in different ways: by successive truncations of faces of the standard simplex~\cite{CarrDevadoss}, as Minkowski sums of faces of the standard simplex~\cite{Postnikov, FeichtnerSturmfels}, or from their normal fans by exhibiting explicit inequality descriptions~\cite{Devadoss, Zelevinsky}.
For a given building set, the resulting polytopes all have the same normal fan, called \defn{nested fan}, whose rays are given by the characteristic vectors of the building blocks, and whose cones are given by the nested sets.
As all nested fans coarsen the braid fan, all graph associahedra and nestohedra are deformed permutahedra, and hence they can be obtained by gliding facets of the permutahedron.
However, in contrast to the classical associahedron~\cite{ShniderSternberg,Loday,HohlwegLange}, note that some graph associahedra and nestohedra cannot be obtained by deleting inequalities in the facet description of the permutahedron~\cite{Pilaud-removahedra}.

\hvFloat[floatPos=p, capWidth=h, capPos=right, capAngle=90, objectAngle=90, capVPos=c, objectPos=c]{figure}
{
		\begin{tabular}{c@{\;}c@{\;}c@{\;}c}
			permutahedron &
			associahedron &
			cyclohedron &
			stellohedron \\[.2cm]
			\includegraphics[scale=.93]{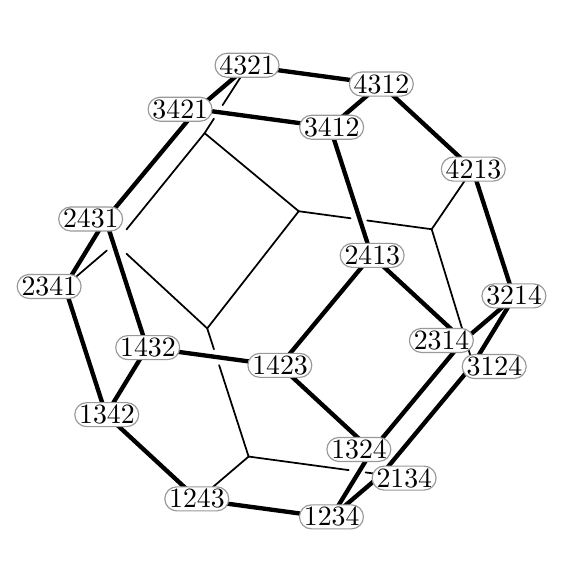} &
			\includegraphics[scale=.93]{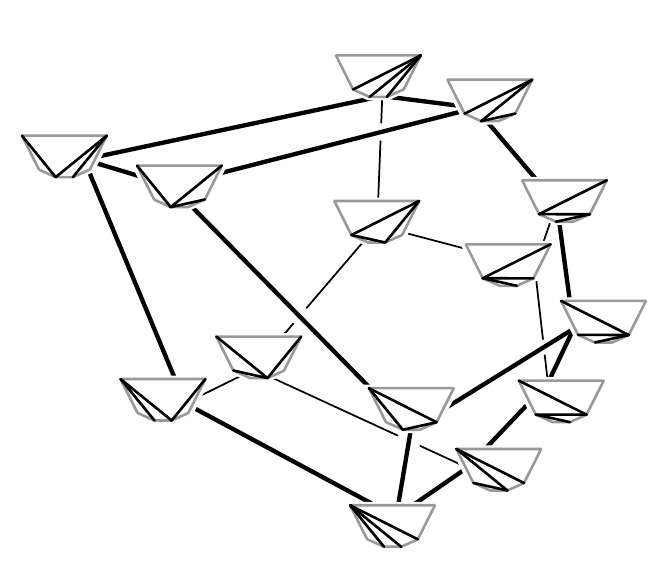} &
			\includegraphics[scale=.93]{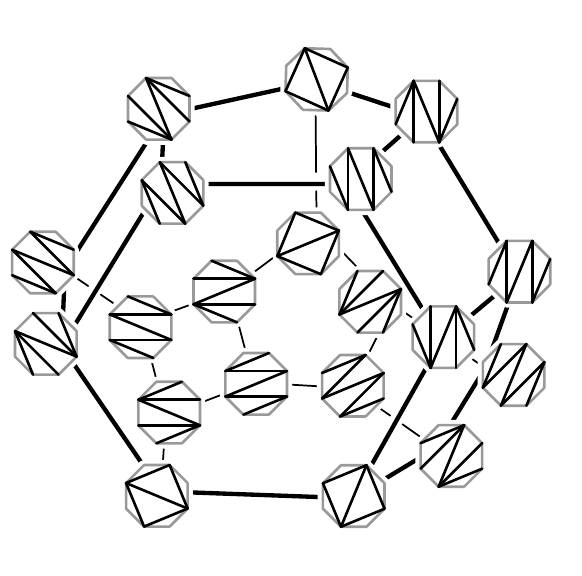} &
			\includegraphics[scale=.93]{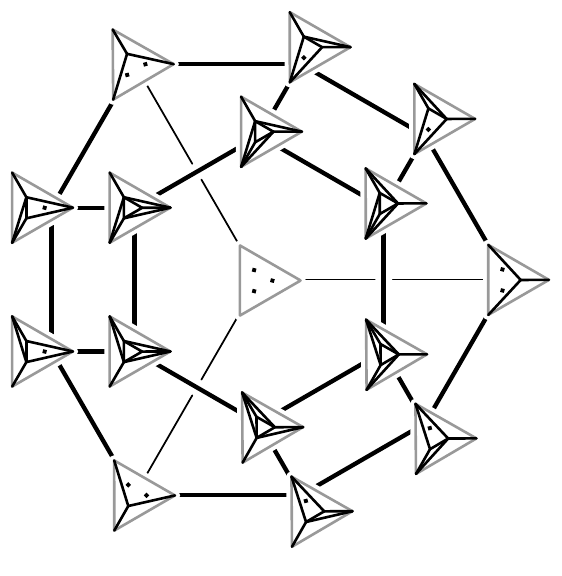} \\[.1cm]
			\includegraphics[scale=.93]{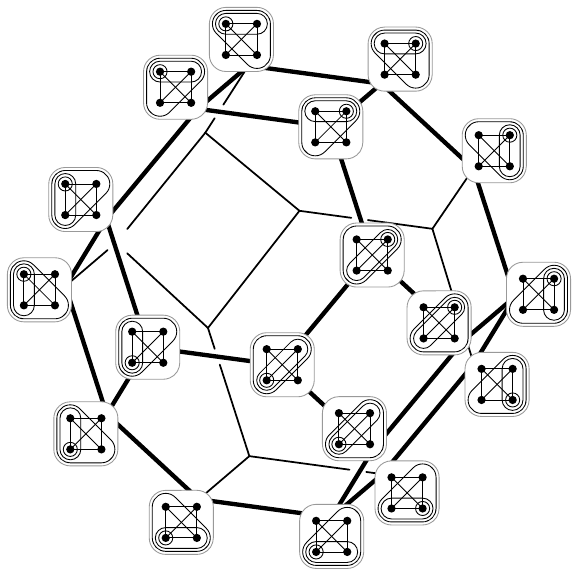} &
			\includegraphics[scale=.93]{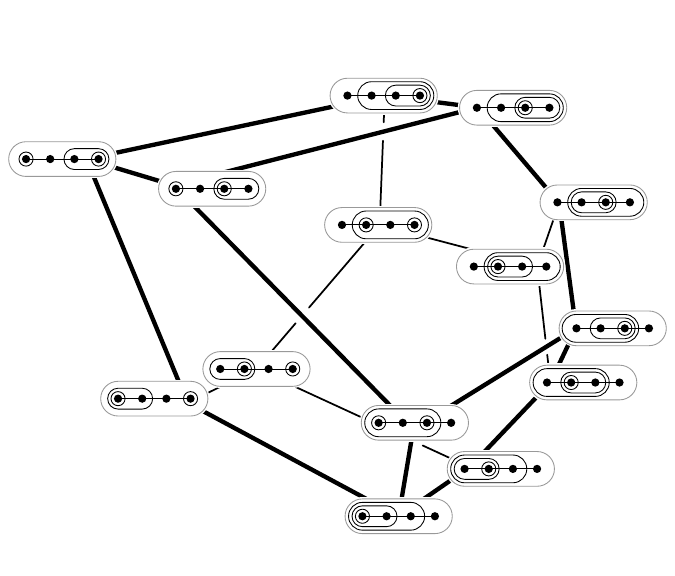} &
			\includegraphics[scale=.93]{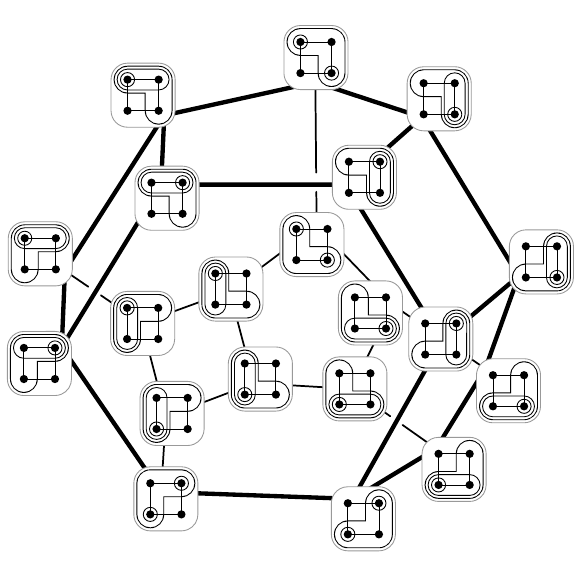} &
			\includegraphics[scale=.93]{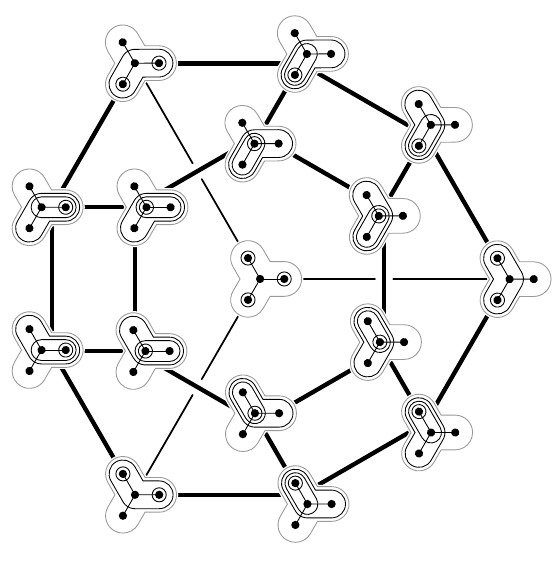} \\[.2cm]
			complete graph &
			path &
			cycle &
			star \\[.3cm]
		\end{tabular}
}
{Some classical families of polytopes as graph associahedra. Illustration from~\cite{MannevillePilaud-compatibilityFans}.}
{fig:specialGraphAssociahedra}


In this paper, we describe all realizations of the nested fans by studying the deformation cone of the $\graphG$-associahedron for any graph~$\graphG$ (\cref{sec:typeConeGraphicalNestedFan}) and of the $\building$-nestohedron of any building set~$\building$ (\cref{sec:typeConeNestedFan}).
Our main contribution is an irredundant facet description of these deformation cones, characterizing which of the wall-crossing inequalities are irreplaceable (\cref{thm:extremalExchangeablePairsGraphicalNestedFan,thm:extremalExchangeFramesNestedFan}).
Even though the graphical case is a specialization of the general case, we present it first separately since it admits a much simpler description that serves as an introduction for the general case.
This simplification relies on two pleasant properties (\cref{prop:exchangeablePairsGraphicalNestedFan}): first, the classical simple characterization of the pairs of exchangeable tubes, and second, the fact that the wall-crossing inequalities only depend on their exchanged tubes.
This yields the following statement, where~$\tubes$ denotes the set of tubes of~$\graphG$ and~$\connectedComponents(G)$ denotes the set containing~$\varnothing$ and all connected components~of~$G$.

\begin{theorem*}[\cref{thm:extremalExchangeablePairsGraphicalNestedFan}]
\label{thm:facetsDeformationConeGraphAsso}
For any graph~$\graphG$, the deformation cone of the graph associahedron~$\Asso[\graphG]$ is the set of polytopes
\(
\set{\b{x} \in \R^V}{-\sum_{v \in V} \b{x}_v \le \b{h}_\varnothing \text{ and } \sum_{v \in \tube} \b{x}_v \le \b{h}_{\tube} \text{ for all } \tube \in \tubes \ssm \{\varnothing\}}
\)
for all~$\b{h}$ in the cone of~$\R^{\tubes}$ defined by the following irredundant facet description:
\begin{itemize}
\item $\sum_{K \in \connectedComponents(G)} \b{h}_K = 0$,
\item $\b{h}_{\tube} + \b{h}_{\tube'} \ge \b{h}_{\tube \cup \tube'} + \sum_{\tube[s] \in \connectedComponents(\tube \cap \tube')} \b{h}_{\tube[s]}$ for any tubes~$\tube, \tube'$ of~$G$ such that~$\tube \ssm \{v\} = \tube' \ssm \{v'\}$ for some neighbor~$v$ of~$\tube'$ in~$\tube \ssm \tube'$ and some neighbor~$v'$ of~$\tube$ in~$\tube' \ssm \tube$.
\end{itemize}
\end{theorem*}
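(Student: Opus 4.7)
The plan is to specialize the general wall-crossing description of the deformation cone of a simple polytope to $\Asso[G]$, and then to establish that the resulting inequalities are irredundant facets.

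First, I would invoke the general fact recalled in the introduction that, for any simple polytope, the deformation cone is cut out in the space of facet heights by one wall-crossing inequality for each interior wall of its normal fan. Interior walls of the graphical nested fan correspond to flips $\tube \leftrightarrow \tube'$ inside a maximal tubing. By \cref{prop:exchangeablePairsGraphicalNestedFan}, the exchangeable pairs in the graphical case are exactly those of the form $\tube \ssm \{v\} = \tube' \ssm \{v'\}$ described in the statement, and the relevant linear dependence among characteristic vectors of the tubes involved reads
\begin{equation*}
\chi_{\tube} + \chi_{\tube'} = \chi_{\tube \cup \tube'} + \sum_{\tube[s] \in \connectedComponents(\tube \cap \tube')} \chi_{\tube[s]},
\end{equation*}
which is checked pointwise (after writing $\tube = A \cup \{v\}$, $\tube' = A \cup \{v'\}$ with $A = \tube \cap \tube'$) and which depends only on $(\tube, \tube')$, not on the surrounding maximal tubing. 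Rewriting this dependence as a wall-crossing inequality yields exactly the claimed inequality on heights. The additional inequality $-\sum_v \b{x}_v \le \b{h}_\varnothing$ encodes a bottom facet with outer normal $-\b{1}$ and height $\b{h}_\varnothing$; the unique global linear relation among the normals $\{\chi_\tube\}_{\tube \in \tubes} \cup \{-\b{1}\}$ is $\sum_{K \in \connectedComponents(G)} \chi_K - \b{1} = 0$, which translates into the equation $\sum_{K \in \connectedComponents(G)} \b{h}_K = 0$ spanning the linearity of the cone.

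The main obstacle is then the irredundancy of the wall-crossing inequalities. For each exchangeable pair $(\tube, \tube')$ I would exhibit an explicit deformation of $\Asso[G]$ that saturates only this inequality while satisfying every other wall-crossing inequality strictly. A natural source of such deformations is the Minkowski-sum description $\sum_{\tube[b] \in \building} y_{\tube[b]} \simplex_{\tube[b]}$ of nestohedra: by carefully tuning the (possibly signed) coefficients $y_{\tube[b]}$, the edge of $\Asso[G]$ associated to $(\tube, \tube')$ collapses to a point while every other flip remains non-degenerate. Controlling precisely which edges collapse is the combinatorial heart of the argument, and this is where the rigidity of the characterization $\tube \ssm \{v\} = \tube' \ssm \{v'\}$, together with its distinguished neighbor vertices $v$ and $v'$, should allow each exchangeable pair to be singled out by a dedicated deformation. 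Equivalently, one may dually produce, for each pair, a functional on $\R^{\tubes}$ supported on the tubes $\tube$, $\tube'$, $\tube \cup \tube'$ and the components of $\tube \cap \tube'$ that pairs strictly positively with the normal of its own wall-crossing inequality and nonnegatively with every other. Assembling this with the equation and inequalities from the first step yields the claimed irredundant facet description.
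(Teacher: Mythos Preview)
There is a genuine gap stemming from a misreading of \cref{prop:exchangeablePairsGraphicalNestedFan}. You assert that ``the exchangeable pairs in the graphical case are exactly those of the form $\tube \ssm \{v\} = \tube' \ssm \{v'\}$'', but this is false: \cref{prop:exchangeablePairsGraphicalNestedFan}(i) only says that $\tube'$ has a \emph{unique neighbor} $v$ in $\tube \ssm \tube'$ (and symmetrically), which allows $\tube \ssm \tube'$ to contain many vertices besides $v$. The pairs with $\tube \ssm \{v\} = \tube' \ssm \{v'\}$ are the \emph{maximal} exchangeable pairs, and showing that these are precisely the extremal ones is the entire content of the theorem. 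Consequently your decomposition $\tube = A \cup \{v\}$, $\tube' = A \cup \{v'\}$ is not available for a general exchangeable pair, and you have silently skipped the direction ``extremal $\Rightarrow$ maximal'': one must show that every non-maximal exchangeable pair yields a \emph{redundant} wall-crossing inequality. The paper does this by an explicit decomposition $\b{n}(\tube,\tube') = \b{n}(\tilde\tube,\tube') + \b{n}(\tube,\tilde\tube')$ where $\tilde\tube = \tube \ssm \{w\}$ and $\tilde\tube' = (\tube \cup \tube') \ssm \{w\}$ for a well-chosen non-disconnecting vertex $w$.

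Your plan for the converse direction (``maximal $\Rightarrow$ extremal'') is in the right spirit but too vague to succeed as stated. The functional you propose, ``supported on the tubes $\tube$, $\tube'$, $\tube \cup \tube'$ and the components of $\tube \cap \tube'$'', cannot work: such a functional must pair nonnegatively with the normals $\b{n}(\tilde\tube,\tilde\tube')$ of \emph{all} other maximal exchangeable pairs, and these involve arbitrary tubes, not just the four you list. The paper instead builds a witness $\b{w} = \b{x} + \delta\b{y} + \varepsilon\b{z}$ from three globally defined vectors in $\R^{\tubes}$ (two of them counting tubes in or out of the set $\alpha(\tube,\tube') = \{\tube[s] : \tube[s] \not\subseteq \tube,\ \tube[s] \not\subseteq \tube',\ \tube[s] \subseteq \tube \cup \tube'\}$, the third an indicator-type vector), and checks a small table of sign conditions against every maximal pair. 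Your Minkowski-sum idea could in principle be made to work, but ``carefully tuning the coefficients'' hides exactly the same combinatorial case analysis, and you would still need to control the behavior at every other wall.
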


The non-graphical case is much more involved.
First, we need a characterization of the pairs of exchangeable blocks (\cref{prop:exchangeablePairsNestedFan}), which was surprisingly missing for arbitrary \mbox{building sets (\cref{rem:Zelevinsky}).}
Second, the wall-crossing inequalities do not anymore correspond to the pairs of exchangeable blocks.
Namely, the wall-crossing inequalities do not only depend on the exchanged blocks, but also on an additional structure that we call the \defn{frame} of the exchange.
Moreover, some distinct exchange frames actually yield the same wall-crossing inequalities.
Taming these technical difficulties, we obtain the following statement, where~$\connectedComponents(\building)$ denotes the set containing~$\varnothing$ and all connected components of~$\building$, where $\mu(P)$ denotes the set of maximal blocks of~$\building$ strictly contained in a block~$P \in \building$, and where a block~$P$ is called \defn{elementary} when the blocks of~$\maximalBlocks(P)$ are disjoint (see \cref{sec:typeConeNestedFan} for details on these notations).

\begin{theorem*}[\cref{thm:extremalExchangeFramesNestedFan}]
\label{thm:facetsDeformationConeNesto}
For any building set~$\building$, the deformation cone of the nestohedron~$\Nest$ is the set of polytopes
\(
\set{\b{x} \in \R^V}{-\sum_{v \in V} \b{x}_v \le \b{h}_\varnothing \text{ and } \sum_{v \in B} \b{x}_v \le \b{h}_B \text{ for all } B \in \building \ssm \{\varnothing\}}
\)
for all~$\b{h}$ in the cone of~$\R^\building$ defined by the following irredundant facet description:
\begin{itemize}
\item $\sum_{K \in \connectedComponents(\building)} \b{h}_K = 0$,
\item $\sum_{B \in\maximalBlocks(P)} \b{h}_{B} \ge \b{h}_P$ for any elementary block~$P$ of~$\building$,
\item $\b{h}_B + \b{h}_{B'} + \sum_{K \in \connectedComponents(P \ssm (B \cup B'))} \b{h}_K \ge \b{h}_P + \sum_{K \in \connectedComponents(B \cap B')} \b{h}_K$ for any block~$P$ of~$\building$ neither singleton nor elementary, and any two blocks~$B \ne B'$ in~$\maximalBlocks(P)$.
\end{itemize}
\end{theorem*}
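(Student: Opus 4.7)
My plan follows the same skeleton as for the graphical case but with additional bookkeeping for exchange frames. The starting point is the wall-crossing description of the deformation cone due to Chapoton--Fomin--Zelevinsky~\cite{ChapotonFominZelevinsky}: the cone consists of all height vectors $\b{h}\in\R^\building$ satisfying the equality $\sum_{K\in\connectedComponents(\building)}\b{h}_K=0$ together with one inequality for each pair of adjacent maximal cones in the nested fan, read off from the unique (up to scaling) linear dependence among the rays lying on the shared wall. Since maximal cones correspond to maximal nested sets and adjacent cones differ by a flip, the raw input is one inequality per flip. The non-graphical difficulty, emphasized in the statement, is that flips are not labelled merely by a pair of exchangeable blocks $\{B,B'\}$, but by such a pair together with an \emph{exchange frame} (the portion of the ambient nested set that parametrizes the flip). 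My first task would therefore be to invoke the characterization of exchangeable pairs from \cref{prop:exchangeablePairsNestedFan} and formalize the notion of frame, so that each flip inequality can be written explicitly in terms of $\b{h}_P$, $\b{h}_B$, $\b{h}_{B'}$, and the heights on the connected components of $B\cap B'$ and of $P\ssm(B\cup B')$, together with heights of blocks of the frame that cancel out.

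Next I would show that every wall-crossing inequality reduces, after this cancellation, to one of the two families in the statement. The dichotomy is driven by the block $P\in\building$ that plays the role of the ``parent'' of the flip. When $P$ is elementary (so the elements of $\maximalBlocks(P)$ partition $P$), the two exchanged blocks~$B,B'$ are disjoint, $B\cap B'=\varnothing$, and $P\ssm(B\cup B')$ is a disjoint union of other maximal sub-blocks of $P$; summing the inequalities associated to the flips within~$\maximalBlocks(P)$ collapses to the single inequality $\sum_{B\in\maximalBlocks(P)}\b{h}_B\ge \b{h}_P$, which is the right form because all pairwise flips inside an elementary~$P$ share the same underlying linear relation among characteristic vectors. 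When $P$ is neither a singleton nor elementary, the inequality is genuinely indexed by an unordered pair $\{B,B'\}\subseteq\maximalBlocks(P)$ with $B\cap B'\neq\varnothing$ or with nontrivial complement, and it is exactly the second family in the statement. Here I would check carefully that inequalities attached to different frames for the same $(P,B,B')$ give the same inequality modulo the equality $\sum_{K\in\connectedComponents(\building)}\b{h}_K=0$, and hence can be grouped.

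The hard part is \emph{irredundancy}: showing that none of the listed inequalities is implied by the others together with the equality. My approach would be the standard one of exhibiting, for each listed inequality, an explicit height vector $\b{h}$ that saturates it while satisfying all the other listed inequalities strictly. The natural source of such witnesses is to take $\b{h}$ of the form $\b{h}_B=f(|B|)$ for a strictly concave function~$f$, perturbed by a small multiple of the characteristic vector of a single block, tuned so that exactly one wall-crossing becomes tight. For elementary blocks~$P$, the witness is essentially built from the Minkowski decomposition of $\Nest$ as a sum of standard simplices $\Delta_B$, perturbed at the simplex $\Delta_P$ only; this forces the elementary inequality at~$P$ to be tight and leaves the exchange inequalities strict by concavity. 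For non-elementary~$P$ with a given pair $\{B,B'\}$, the witness is more delicate and must exploit the nontrivial overlap $B\cap B'$ or the nontrivial complement $P\ssm(B\cup B')$ to justify why this particular flip cannot be replaced by a combination of flips in smaller parents.

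The main obstacle I anticipate is precisely this irredundancy analysis in the non-elementary case, together with the preliminary combinatorial lemma identifying which frames give equivalent inequalities. Controlling the cancellations in the frame-dependent expression of the inequality, and showing that the surviving expression only depends on $P$, $B$, $B'$, the components of $B\cap B'$, and the components of $P\ssm(B\cup B')$, is the technical heart of the argument; once that is settled, the redundancy of non-listed flips and the irredundancy of listed ones should follow by the witness construction sketched above, mirroring (with appropriate corrections) the graphical argument that proved \cref{thm:extremalExchangeablePairsGraphicalNestedFan}.
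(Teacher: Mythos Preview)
Your skeleton is right, but there are two genuine gaps.

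First, the reduction of arbitrary wall-crossing inequalities to the listed ones is not what you sketch. The exchange relation already depends only on the frame $(B,B',P)$ (this is \cref{prop:exchangeRelation}): there are no extra ``frame blocks'' whose heights appear and then cancel. What you must show instead is that every \emph{non-maximal} exchange frame $(B,B',P)$ yields a \emph{redundant} inequality, i.e.\ that its normal vector $\b{n}(B,B',P)$ is a positive combination of normals of other frames. The paper does this by an explicit two-term decomposition $\b{n}(B,B',P)=\b{n}(\cdot,\cdot,\cdot)+\b{n}(\cdot,\cdot,\cdot)$ obtained by inserting a block $M$ with $B\subsetneq M\subsetneq P$ maximal for this property, and splitting into the cases $B'\subseteq M$ and $B'\not\subseteq M$; each case requires a careful identity among connected-component sets (\cref{eq:connectedComponents1,eq:connectedComponents2}). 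Your proposal does not contain this step, and nothing in your outline substitutes for it.

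Second, your irredundancy witness is too coarse to work in the non-elementary case. A height of the form $\b{h}_B=f(|B|)$ perturbed at a single block cannot separate two maximal exchange frames $(B,B',P)$ and $(B,B'',P)$ sharing the same non-elementary parent $P$: a perturbation at $P$ affects both identically, and a perturbation at any single sub-block of $P$ will typically appear in several of the inequalities indexed by pairs from $\maximalBlocks(P)$. The paper's witness is a three-part vector $\b{w}=\b{x}+\delta\b{y}+\varepsilon\b{z}$ where $\b{x}$ and $\b{y}$ split Postnikov's count $-|\{D\in\building:D\subseteq C\}|$ along the set $\alpha(B,B',P)=\{C\in\building: C\subseteq P,\ C\not\subseteq B,\ C\not\subseteq B',\ C\not\subseteq P\ssm(B\cup B')\}$, and the crucial perturbation $\b{z}_C$ equals $-1$ precisely when $B\subseteq C$ or $B'\subseteq C$. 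It is this dependence of $\b{z}$ on \emph{both} $B$ and $B'$ that distinguishes the target frame from all other maximal frames with the same parent; a single-block perturbation cannot achieve this. You should either adopt this construction or propose a witness that genuinely depends on the pair $\{B,B'\}$, and then verify the delicate sign analysis (the table in the proof of \cref{thm:extremalExchangeFramesNestedFan}) that uses \cref{prop:mutualizedExchange1,prop:mutualizedExchange2} to handle the elementary-parent collisions.
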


These irredundant inequality descriptions enable us to count the facets of these deformation cones and thus to determine when these deformation cones are simplicial.
It turns out that the deformation cone of the $\graphG$-associahedron is simplicial if and only if~$\graphG$ is a disjoint union of paths (\ie the $\graphG$-associahedron is a Cartesian product of classical associahedra).
In contrast, there is much more freedom for nestohedra of arbitrary building sets, and we show that the deformation cone of the nestohedron is always simplicial for an \defn{interval building set}, that is a building set whose blocks are some intervals of~$[n]$ (\cref{prop:simplicialTypeConeInterval}).
As advocated in~\cite{PadrolPaluPilaudPlamondon}, the simpliciality of the deformation cone leads to an elegant description of all deformations of the polytope in the so-called kinematic space~\cite{ArkaniHamedBaiHeYan}.
Generalizing the kinematic associahedra of~\cite{ArkaniHamedBaiHeYan}, we thus define the \defn{kinematic nestohedra} of arbitrary interval building sets (\cref{prop:kinematicNestohedraInterval}).


\section{Geometric preliminaries}
\label{sec:preliminaries}

We first briefly recall basic notions of polytopes and fans, and the definition of the type cone of~\cite{McMullen-typeCone}, following the presentation of~\cite{PadrolPaluPilaudPlamondon}.


\subsection{Fans and polytopes}
\label{subsec:fansPolytopes}

A (polyhedral) \defn{cone} is the positive span of finitely many vectors or equivalently, the intersection of finitely many closed linear half-spaces. 
The \defn{faces} of a cone are its intersections with its supporting hyperplanes. 
The \defn{rays} (resp.~\defn{facets}) are the faces of dimension~$1$ (resp.~ codimension~$1$).
A cone is \defn{simplicial} if its rays are linearly independent.
A (polyhedral) \defn{fan}~$\Fan$ is a set of cones such that any face of a cone of~$\Fan$ belongs to~$\Fan$, and any two cones of~$\Fan$ intersect along a face of both. 
A fan is \defn{essential} if the intersection of its cones is the origin, \defn{complete} if the union of its cones covers~$\R^n$, and \defn{simplicial} if all its cones are simplicial.
In a simplicial fan, we say that two maximal cones are \defn{adjacent} if they share a facet, and that two rays are \defn{exchangeable} if they belong to two adjacent cones but not to their common facet.
We will say that the fan \defn{realizes} the simplicial complex consisting of the subsets of rays spanning cones.

A \defn{polytope} is the convex hull of finitely many points or equivalently, a bounded intersection of finitely many closed affine half-spaces.
The \defn{faces} of a polytope are its intersections with its supporting hyperplanes.
The \defn{vertices} (resp.~\defn{edges}, resp.~\defn{facets}) are the faces of dimension~$0$ (resp.~dimension~$1$, resp.~codimension~$1$).

The \defn{normal cone} of a face~$F$ of a polytope~$P$ is the cone generated by the normal vectors of the facets of~$P$ containing~$F$.
Said differently, it is the cone of vectors~$\b{c}$ such that the linear form~$\b{x} \mapsto \dotprod{\b{c}}{\b{x}}$ on~$P$ is maximized by all points of the face~$F$.
The \defn{normal fan} of~$P$ is the set of normal cones of all its faces.


\subsection{Type cone}
\label{subsec:typeCone}

Fix an essential complete simplicial fan~$\Fan$ in~$\R^n$. Let~$\b{G}$ be the $N \times n$-matrix whose rows are (representative vectors of) the rays of~$\Fan$.
For any height vector~$\b{h} \in \R^N$, we define the polytope
\(
P_\b{h} \eqdef \set{\b{x} \in \R^n}{\b{G}\b{x} \le \b{h}}.
\)
The following classical statement characterizes the height vectors~$\b{h}$ for which the fan~$\Fan$ is the normal fan of this polytope~$P_\b{h}$.

\begin{proposition}[\cite{GelfandKapranovZelevinsky,ChapotonFominZelevinsky}]
\label{prop:characterizationPolytopalFan}
Let~$\Fan$ be an essential complete simplicial fan in~$\R^n$. Then the following are equivalent for any height vector~$\b{h} \in \R^N$:
\begin{enumerate}
\item The fan~$\Fan$ is the normal fan of the polytope~$P_\b{h} \eqdef \set{\b{x} \in \R^n}{\b{G}\b{x} \le \b{h}}$.
\item For any two adjacent maximal cones~$\R_{\ge0}\b{R}$ and~$\R_{\ge0}\b{R}'$ of~$\Fan$ with~${\b{R} \ssm \{\b{r}\} = \b{R}' \ssm \{\b{r}'\}}$, the height vector~$\b{h}$ satisfies the \defn{wall-crossing inequality}
\(
\sum_{\b{s} \in \b{R} \cup \b{R}'} \coefficient[{\b{s}}][\b{R}][\b{R}'] \, \b{h}_{\b{s}} > 0,
\)
where
\(
\sum_{\b{s} \in \b{R} \cup \b{R}'} \coefficient[{\b{s}}][\b{R}][\b{R}'] \, \b{s} = \b{0}
\)
is the unique linear dependence among the rays of~$\b{R} \cup \b{R}'$ such that~$\coefficient[{\b{r}}][\b{R}][\b{R}'] + \coefficient[{\b{r}'}][\b{R}][\b{R}'] = 2$. 
\end{enumerate}
\end{proposition}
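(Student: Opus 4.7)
The plan is to analyze both implications through the following correspondence: for each maximal cone $\R_{\ge 0}\b{R}$ of $\Fan$, let $v_\b{R} \in \R^n$ be the unique point satisfying $\dotprod{\b{s}}{v_\b{R}} = \b{h}_\b{s}$ for every $\b{s} \in \b{R}$. This system is well-posed because $\b{R}$ is a basis of $\R^n$ by essentiality and simpliciality of $\Fan$. The heart of the argument is the single identity obtained by evaluating the dependence $\sum_{\b{s} \in \b{R} \cup \b{R}'} \coefficient[{\b{s}}][\b{R}][\b{R}'] \b{s} = \b{0}$ against $v_\b{R}$ and using $\dotprod{\b{s}}{v_\b{R}} = \b{h}_\b{s}$ for $\b{s} \in \b{R}$, namely
\[
\sum_{\b{s} \in \b{R} \cup \b{R}'} \coefficient[{\b{s}}][\b{R}][\b{R}'] \, \b{h}_\b{s} \;=\; \coefficient[{\b{r}'}][\b{R}][\b{R}'] \cdot (\b{h}_{\b{r}'} - \dotprod{\b{r}'}{v_\b{R}}).
\]
A preliminary sign check shows that both $\coefficient[{\b{r}}][\b{R}][\b{R}']$ and $\coefficient[{\b{r}'}][\b{R}][\b{R}']$ are strictly positive: since $\b{r}$ and $\b{r}'$ lie on opposite open sides of the hyperplane spanned by $\b{R} \cap \b{R}'$, any dependence among the rays of $\b{R} \cup \b{R}'$ must assign coefficients of the same sign to $\b{r}$ and $\b{r}'$, and the normalization $\coefficient[{\b{r}}][\b{R}][\b{R}'] + \coefficient[{\b{r}'}][\b{R}][\b{R}'] = 2$ fixes that sign to be positive. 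In particular, the wall-crossing inequality is equivalent to $\dotprod{\b{r}'}{v_\b{R}} < \b{h}_{\b{r}'}$, and symmetrically to $\dotprod{\b{r}}{v_{\b{R}'}} < \b{h}_\b{r}$.

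For the implication $(1) \Rightarrow (2)$: if $\Fan$ is the normal fan of $P_\b{h}$, then $v_\b{R}$ is genuinely the vertex of $P_\b{h}$ whose normal cone is $\R_{\ge 0} \b{R}$, so $\dotprod{\b{t}}{v_\b{R}} < \b{h}_\b{t}$ strictly for every ray $\b{t}$ of $\Fan$ outside $\b{R}$, in particular for $\b{t} = \b{r}'$. The displayed identity immediately gives the wall-crossing inequality.

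For the implication $(2) \Rightarrow (1)$: by construction, $v_\b{R}$ and $v_{\b{R}'}$ coincide on $\b{R} \cap \b{R}'$, so the assignment $\b{c} \mapsto \dotprod{\b{c}}{v_\b{R}}$ for $\b{c} \in \R_{\ge 0} \b{R}$ glues into a well-defined piecewise linear function $\phi \colon \R^n \to \R$. The wall-crossing inequalities, rewritten as $\phi(\b{r}') = \b{h}_{\b{r}'} > \dotprod{\b{r}'}{v_\b{R}}$ for every pair of adjacent cones, express that $\phi$ is strictly convex across every wall of $\Fan$. Since $\Fan$ is complete, this local strict convexity propagates to global convexity of $\phi$ on all of $\R^n$; in particular each linear piece $\dotprod{\cdot}{v_\b{R}}$ becomes a global lower bound for $\phi$. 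Consequently $\dotprod{\b{t}}{v_\b{R}} \le \phi(\b{t}) = \b{h}_\b{t}$ for every ray $\b{t}$ of $\Fan$, with equality precisely for $\b{t} \in \b{R}$, so that $v_\b{R}$ is a vertex of $P_\b{h}$ with normal cone exactly $\R_{\ge 0} \b{R}$. As $\Fan$ is complete, this exhausts all vertices of $P_\b{h}$ and shows that $\Fan$ is its normal fan.

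The main obstacle is the local-to-global step in $(2) \Rightarrow (1)$: passing from strict convexity of $\phi$ across each individual wall to global convexity on $\R^n$. This is a classical fact about piecewise linear functions on complete simplicial fans, but it crucially depends on the completeness hypothesis on $\Fan$ and is the only place where completeness is used.
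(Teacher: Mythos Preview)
The paper does not prove this proposition: it is stated with a citation to \cite{GelfandKapranovZelevinsky,ChapotonFominZelevinsky} and then used as a black box throughout. Your argument is correct and is precisely the classical one found in those references. The key identity
\[
\sum_{\b{s} \in \b{R} \cup \b{R}'} \coefficient[{\b{s}}][\b{R}][\b{R}'] \, \b{h}_\b{s} \;=\; \coefficient[{\b{r}'}][\b{R}][\b{R}'] \, (\b{h}_{\b{r}'} - \dotprod{\b{r}'}{v_\b{R}})
\]
together with the positivity of $\coefficient[{\b{r}'}][\b{R}][\b{R}']$ is exactly the bridge between the wall-crossing inequality and the support-function picture, and you handle both directions cleanly. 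You also correctly isolate the one genuinely nontrivial step, namely that local strict convexity of the piecewise-linear function~$\phi$ across every wall forces global (strict) convexity, and that this is where completeness of~$\Fan$ enters. One small remark: to conclude that the normal cone of $v_\b{R}$ is \emph{exactly} $\R_{\ge 0}\b{R}$ (and not larger), you implicitly use the strict form of global convexity, i.e.\ that distinct maximal cones yield distinct linear pieces; this is part of the same classical local-to-global statement, but it is worth being explicit that you need the strict version.
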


Following \cite{McMullen-typeCone}, we define the \defn{type cone} of~$\Fan$ as the cone~$\typeCone(\Fan)$ of polytopal realizations of~$\Fan$.
Based on \cref{prop:characterizationPolytopalFan} it can be parametrized as
\begin{align*}
\typeCone(\Fan) & \eqdef \set{\b{h} \in \R^N}{\Fan \text{ is the normal fan of } P_\b{h}} \\
& = \Bigset{\b{h} \in \R^N}{\sum_{\b{s} \in \b{R} \cup \b{R}'} \coefficient[{\b{s}}][\b{R}][\b{R}'] \, \b{h}_{\b{s}} > 0 \; \begin{array}{l} \text{for any adjacent maximal} \\ \text{cones~$\R_{\ge0}\b{R}$ and~$\R_{\ge0}\b{R}'$ of~$\Fan$} \end{array}}.
\end{align*}
Note that~$\typeCone(\Fan)$ is an open polyhedral cone (dilations preserve normal fans) and contains a lineality subspace of dimension~$n$ (translations preserve normal fans). Its closure $\ctypeCone(\Fan)$ consists of all polytopes whose normal fan coarsens~$\Fan$, and is called the \defn{deformation cone}. Its faces are the deformation cones of the coarsenings of~$\Fan$. Having into account the lineality, we will say that the type cone is \defn{simplicial} when it has precisely $N-n$ facets.

When the type cone is simplicial, it naturally defines alternative polytopal realizations of the fan~$\Fan$ in the non-negative orthant parametrized by positive vectors, akin to the realizations of the kinematic associahedra introduced in~\cite{ArkaniHamedBaiHeYan}. See~\cite[Sect.~1.4]{PadrolPaluPilaudPlamondon} for details.

\begin{proposition}[{\cite{PadrolPaluPilaudPlamondon}}]
\label{prop:simplicialTypeCone}
Assume that the type cone~$\typeCone(\Fan)$ is simplicial and let~$\b{K}$ be the $(N-n) \times N$-matrix whose rows are the inner normal vectors of the facets of~$\typeCone(\Fan)$. Then, for any positive vector~${\b{p} \in \R_{>0}^{N-n}}$, the polytope
\(
R_\b{p} \eqdef \set{\b{z} \in \R^N}{\b{K}\b{z} = \b{p} \text{ and } \b{z} \ge 0}
\)
is a realization of the fan~$\Fan$.
Moreover, the polytopes~$R_\b{p}$ for~$\b{p} \in \R_{>0}^{N-n}$ describe all polytopal realizations of~$\Fan$ (up to translation).
\end{proposition}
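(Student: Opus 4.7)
The plan is to exhibit an explicit affine isomorphism between $R_\b{p}$ and the polytope $P_\b{h}$ of \cref{prop:characterizationPolytopalFan}, where $\b{h}$ is any preimage of $\b{p}$ under $\b{K}$. The starting observation is that $\b{K}\b{G} = 0$: each row of $\b{K}$ is an inner normal of a facet of~$\typeCone(\Fan)$, and the $n$-dimensional lineality of $\typeCone(\Fan)$ (coming from translations of $P_\b{h}$) is exactly the column span of $\b{G}$, so inner normals of the type cone must vanish on it. Since $\b{G}$ has rank $n$, its left kernel has dimension $N-n$; under the simpliciality assumption, $\b{K}$ has $N-n$ rows, which are linearly independent (being normals of the facets of a simplicial cone modulo lineality), so the rows of $\b{K}$ form a basis of $\ker \b{G}^\top$.

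Next I would fix $\b{p} \in \R_{>0}^{N-n}$ and pick any $\b{h} \in \R^N$ with $\b{K}\b{h} = \b{p}$. The condition $\b{K}\b{h} > 0$ means that $\b{h}$ lies in the interior of $\typeCone(\Fan)$, hence $P_\b{h} = \{\b{x} \in \R^n : \b{G}\b{x} \le \b{h}\}$ is a realization of $\Fan$ by \cref{prop:characterizationPolytopalFan}. Consider the affine map $\varphi \colon \R^n \to \R^N$ defined by $\varphi(\b{x}) \eqdef \b{h} - \b{G}\b{x}$. It is injective since $\b{G}$ has full column rank, and its image is the affine subspace $\b{h} + \mathrm{Im}(\b{G}) = \{\b{z} \in \R^N : \b{K}\b{z} = \b{K}\b{h} = \b{p}\}$, where the equality uses $\b{K}\b{G}=0$ together with the fact that the rows of $\b{K}$ span $\ker \b{G}^\top$ so that $\b{p}-\b{K}\b{z}=0$ is equivalent to $\b{h}-\b{z} \in \mathrm{Im}(\b{G})$. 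The inequality $\b{G}\b{x} \le \b{h}$ translates literally into $\varphi(\b{x}) \ge \b{0}$, hence $\varphi$ restricts to an affine isomorphism from $P_\b{h}$ onto $R_\b{p}$. This proves that $R_\b{p}$ realizes the fan $\Fan$.

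For the last sentence, the closure of the type cone parametrizes all polytopes whose normal fan coarsens $\Fan$, with the lineality $\mathrm{Im}(\b{G})$ corresponding to translating $P_\b{h}$. Since $\b{K}$ induces a linear isomorphism $\R^N/\mathrm{Im}(\b{G}) \simeq \R^{N-n}$ sending the interior of $\typeCone(\Fan)$ onto $\R_{>0}^{N-n}$, each translation class of polytopal realizations of $\Fan$ corresponds to a unique $\b{p} \in \R_{>0}^{N-n}$, namely $\b{p} = \b{K}\b{h}$.

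The proof is essentially linear algebra once the two structural facts are in place: $\b{K}\b{G}=0$ and the spanning of $\ker \b{G}^\top$ by the rows of $\b{K}$. The only potential subtlety I would flag is the second of these, which relies on the precise reading of ``simplicial type cone'' adopted after \cref{prop:characterizationPolytopalFan}, namely that $\typeCone(\Fan)$ has exactly $N-n$ facets modulo its $n$-dimensional lineality; this is exactly what makes $\b{K}$ of the right size to realize the identification with $\R^{N-n}$.
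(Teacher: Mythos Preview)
The paper does not include a proof of this proposition: it is stated with a citation to~\cite{PadrolPaluPilaudPlamondon} and the reader is referred to~\cite[Sect.~1.4]{PadrolPaluPilaudPlamondon} for details. So there is no in-paper argument to compare against.

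That said, your proof is correct and is essentially the standard one. The two structural facts you isolate---that the lineality space of~$\typeCone(\Fan)$ is~$\mathrm{Im}(\b{G})$ (so~$\b{K}\b{G}=0$) and that the~$N-n$ rows of~$\b{K}$ therefore form a basis of~$(\mathrm{Im}\,\b{G})^\perp$ under the simpliciality hypothesis---are exactly what make the affine map~$\varphi(\b{x}) = \b{h} - \b{G}\b{x}$ an isomorphism from~$P_\b{h}$ onto~$R_\b{p}$. One small point worth making explicit: the claim that~$R_\b{p}$ ``realizes~$\Fan$'' should be read as saying that, after pulling back along~$\varphi$, the normal fan of~$R_\b{p}$ is~$\Fan$; concretely, the facet~$\{\b{z}_i = 0\}$ of~$R_\b{p}$ pulls back to the facet~$\{(\b{G}\b{x})_i = \b{h}_i\}$ of~$P_\b{h}$, whose outer normal is the~$i$-th row of~$\b{G}$. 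Your argument implicitly contains this, but since~$R_\b{p}$ lives in~$\R^N$ rather than~$\R^n$, stating it removes any ambiguity about what ``realization'' means here.
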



\section{Type cones of graphical nested fans}
\label{sec:typeConeGraphicalNestedFan}

In this section, we study graphical nested fans, postponing the study of arbitrary nested fans to \cref{sec:typeConeNestedFan}.
While the graphical case is significantly simpler than the general case, some proof ideas presented here will be transported to \cref{sec:typeConeNestedFan}.
This section is thus useful both to the readers only interested in the graphical case and as a prototype for the general case.


\subsection{Graphical nested complex, graphical nested fan, and graph associahedron}
\label{subsec:graphicalNestedComplexGraphicalNestedFan}

We start with the definitions and properties of the nested complex of a graph, using material from~\cite{CarrDevadoss, Postnikov, FeichtnerSturmfels, Zelevinsky, MannevillePilaud-compatibilityFans}.

\subsubsection{Graphical nested complex}
Let~$\graphG$ be a graph with vertex set~$\ground$.
A \defn{tube} of~$\graphG$ is a non-empty subset of vertices of~$\graphG$ whose induced subgraph is connected.
The set of tubes of~$\graphG$ is denoted by~$\tubes$.
The (inclusion) maximal tubes of~$\graphG$ are its \defn{connected components}~$\connectedComponents(\graphG)$.
Two tubes~$\tube, \tube'$ of~$\graphG$ are \defn{compatible} if they are either nested (\ie $\tube \subseteq \tube'$ or~$\tube' \subseteq \tube$), or disjoint and non-adjacent~(\ie~${\tube \cup \tube' \notin \tubes}$).
Note that any connected component of~$\graphG$ is compatible with any other tube of~$\graphG$.
A \defn{tubing} on~$\graphG$ is a set~$\tubing$ of pairwise compatible tubes of~$\graphG$ containing all connected components~$\connectedComponents(\graphG)$.
Examples are illustrated in \cref{fig:exmNestedGraphical}.
The \defn{nested complex} of~$\graphG$ is the simplicial complex~$\nestedComplex(\graphG)$ whose faces are~$\tubing \ssm \connectedComponents(\graphG)$ for all tubings~$\tubing$~on~$\graphG$.
If~$\tubing \ssm \{\tube\} = \tubing' \ssm \{\tube'\}$ for two maximal tubings~$\tubing$ and~$\tubing'$ and two tubes~$\tube$ and~$\tube'$, we say that~$\tubing$ and $\tubing'$ are \defn{adjacent} and that~$\tube$ and~$\tube'$ are~\defn{exchangeable}.

\begin{figure}[h]
	\capstart
	\centerline{\includegraphics[scale=.5]{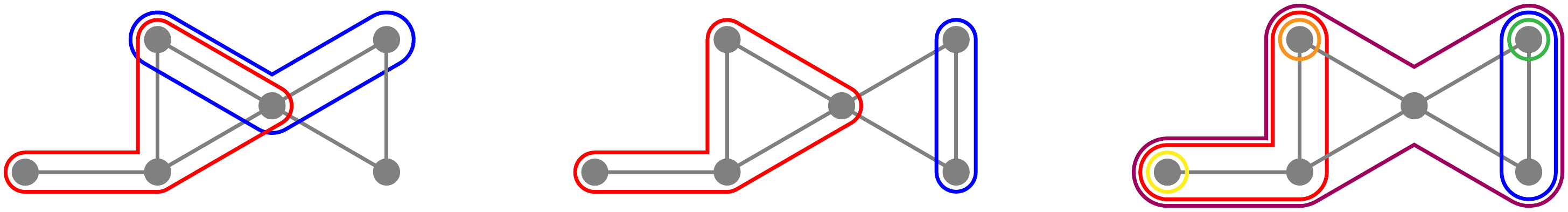}}
	\caption{Some incompatible tubes (left and middle), and a maximal tubing (right).}
	\label{fig:exmNestedGraphical}
\end{figure}

\subsubsection{Graphical nested fan}
Let~$(\b{e}_v)_{v \in \ground}$ be the canonical basis of~$\R^\ground$.
We consider the subspace~${\HH \eqdef \bigset{\b{x} \in \R^\ground}{\sum_{v \in K} x_v = 0 \text{ for all } K \in \connectedComponents(\graphG)}}$ and let~$\pi : \R^\ground \to \HH$ denote the orthogonal projection onto~$\HH$.
The \defn{$\b{g}$-vector} of a tube~$\tube$ of~$\graphG$ is the projection~$\gvector{\tube} \eqdef \pi \big( \sum_{v \in \tube} \b{e}_v \big)$ of the characteristic vector of~$\tube$.
We set~$\gvectors{\tubing} \eqdef \set{\gvector{\tube}}{\tube \in \tubing}$ for a tubing~$\tubing$ on~$\graphG$.
Note that by definition, $\gvector{\varnothing} = \b{0}$ and $\gvector{K} = \b{0}$ for all connected components~$K \in \connectedComponents(\graphG)$.
The vectors~$\gvector{\tube}$ with~$\tube \in \tubes$ support a complete simplicial fan realization of the nested complex.
See~\cref{fig:graphicalNestedFans}.

\begin{theorem}[\cite{CarrDevadoss, Postnikov, FeichtnerSturmfels, Zelevinsky}]
\label{thm:graphicalNestedFan}
For any graph~$\graphG$, the set of cones
\[
\nestedFan[\graphG] \eqdef \set{\R_{\ge 0} \, \gvectors{\tubing}}{\tubing \text{ tubing on } \graphG}
\]
is a complete simplicial fan of~$\HH$, called the \defn{nested fan} of~$\graphG$, realizing the nested complex~$\nestedComplex(\graphG)$.
\end{theorem}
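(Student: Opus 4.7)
The plan is to verify the three properties that characterize a complete simplicial fan realizing a given simplicial complex: every maximal cone is simplicial of dimension $\dim \HH$, adjacent maximal cones meet exactly along their shared facet, and the maximal cones cover $\HH$. Throughout, I would use the characteristic-vector identities in $\R^\ground$ and then project to $\HH$.

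For simpliciality, I would show by a triangularity argument that for any tubing $\tubing$ the vectors $\set{\gvector{\tube}}{\tube \in \tubing \ssm \connectedComponents(\graphG)}$ are linearly independent in $\HH$. Order the tubes of $\tubing$ so that each tube comes after all tubes of $\tubing$ strictly contained in it. Every tube $\tube \in \tubing \ssm \connectedComponents(\graphG)$ contains some vertex $v \in \ground$ that does not belong to any maximal $\tube' \in \tubing$ with $\tube' \subsetneq \tube$; the coordinate $v$ of $\gvector{\tube}$ is nonzero while the coordinate $v$ of $\gvector{\tube''}$ vanishes for every $\tube'' \in \tubing$ strictly smaller than $\tube$. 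This yields a lower-triangular transition matrix and hence linear independence. Since a maximal tubing contains exactly $|\ground| - |\connectedComponents(\graphG)| = \dim \HH$ non-component tubes, every such cone is full-dimensional and simplicial.

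For the wall-crossing property, consider two adjacent maximal tubings $\tubing, \tubing'$ with $\tubing \ssm \{\tube\} = \tubing' \ssm \{\tube'\}$. The set-theoretic identity
\[
\mathbf{1}_\tube + \mathbf{1}_{\tube'} = \mathbf{1}_{\tube \cup \tube'} + \mathbf{1}_{\tube \cap \tube'} = \mathbf{1}_{\tube \cup \tube'} + \sum_{\tube[s] \in \connectedComponents(\tube \cap \tube')} \mathbf{1}_{\tube[s]}
\]
projects to the relation
\[
\gvector{\tube} + \gvector{\tube'} = \gvector{\tube \cup \tube'} + \sum_{\tube[s] \in \connectedComponents(\tube \cap \tube')} \gvector{\tube[s]}
\]
in $\HH$. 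The core combinatorial check is that $\tube \cup \tube'$ is a tube compatible with every element of $\tubing \cap \tubing'$, and that every component of $\tube \cap \tube'$ is either a connected component of $\graphG$ (in which case its $\gvector{}$ is zero) or a tube compatible with every element of $\tubing \cap \tubing'$. All these tubes therefore appear in the maximal tubing $\tubing \cap \tubing'$ extended, so the displayed identity is the wall-crossing dependence of \cref{prop:characterizationPolytopalFan}, with the correct sign pattern (positive on $\gvector{\tube}, \gvector{\tube'}$, non-positive on the remaining $\gvector{}$'s). This shows that $\R_{\ge 0} \gvectors{\tubing}$ and $\R_{\ge 0} \gvectors{\tubing'}$ lie on opposite sides of their common facet $\R_{\ge 0} \gvectors{\tubing \cap \tubing'}$.

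To conclude completeness, I would invoke the standard pseudomanifold principle: once each maximal cone is simplicial, adjacent maximal cones fit correctly along their shared walls, and the flip graph on maximal tubings is connected, the union of the maximal cones is a complete fan of $\HH$. Alternatively, one can shortcut this step by invoking the explicit polytopal realizations of $\Asso[\graphG]$ as successive truncations of the simplex \cite{CarrDevadoss} or as a Minkowski sum of faces of the standard simplex \cite{Postnikov,FeichtnerSturmfels}, each of whose normal fan is by direct inspection the collection $\nestedFan[\graphG]$. The main obstacle is the combinatorial verification in the wall-crossing step: one must show that, for any exchangeable pair $(\tube, \tube')$ inside a maximal tubing, the tube $\tube \cup \tube'$ and all components of $\tube \cap \tube'$ are indeed compatible with the other tubes of $\tubing \cap \tubing'$. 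This is where the combinatorics of tubes enters non-trivially, and it is what singles out the graphical setting as the clean special case before tackling arbitrary building sets in \cref{sec:typeConeNestedFan}.
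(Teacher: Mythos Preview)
The paper does not prove \cref{thm:graphicalNestedFan}; it is stated as a known result with citations to~\cite{CarrDevadoss, Postnikov, FeichtnerSturmfels, Zelevinsky} and no proof is given. So there is no ``paper's own proof'' to compare against. Your sketch is a reasonable self-contained outline along the lines of the arguments in~\cite{Zelevinsky}: triangularity for simpliciality, the characteristic-vector identity for the wall relation, and either the pseudomanifold argument or an appeal to an explicit polytope for completeness.

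Two remarks. First, your triangularity step is slightly misstated: after the orthogonal projection~$\pi$ onto~$\HH$, the $v$-coordinate of~$\gvector{\tube}$ is not simply~$\one_{v \in \tube}$, so the claim ``the coordinate~$v$ of~$\gvector{\tube}$ is nonzero while \dots'' is not literally correct. The clean way is to run the triangularity argument on the unprojected characteristic vectors~$\one_\tube$ for~$\tube$ in a maximal tubing~$\tubing$ (which has~$|V|$ elements, hence gives a basis of~$\R^V$), and then observe that the kernel of~$\pi$ is exactly the span of the~$\one_K$ for~$K \in \connectedComponents(\graphG)$; the remaining $\gvector{\tube}$ are then independent in~$\HH$. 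Second, the combinatorial content of your wall-crossing step---that $\tube \cup \tube'$ and the components of~$\tube \cap \tube'$ lie in~$\tubing \cap \tubing'$ and yield the displayed dependence---is precisely what the paper records later as \cref{prop:exchangeablePairsGraphicalNestedFan}\,(ii)--(iii), itself attributed to~\cite{MannevillePilaud-compatibilityFans, Zelevinsky}. So your argument is consistent with the paper's logical flow, but be aware you are invoking facts the paper only states subsequently.
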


\begin{figure}[h]
	\capstart
	\centerline{\includegraphics[scale=.55]{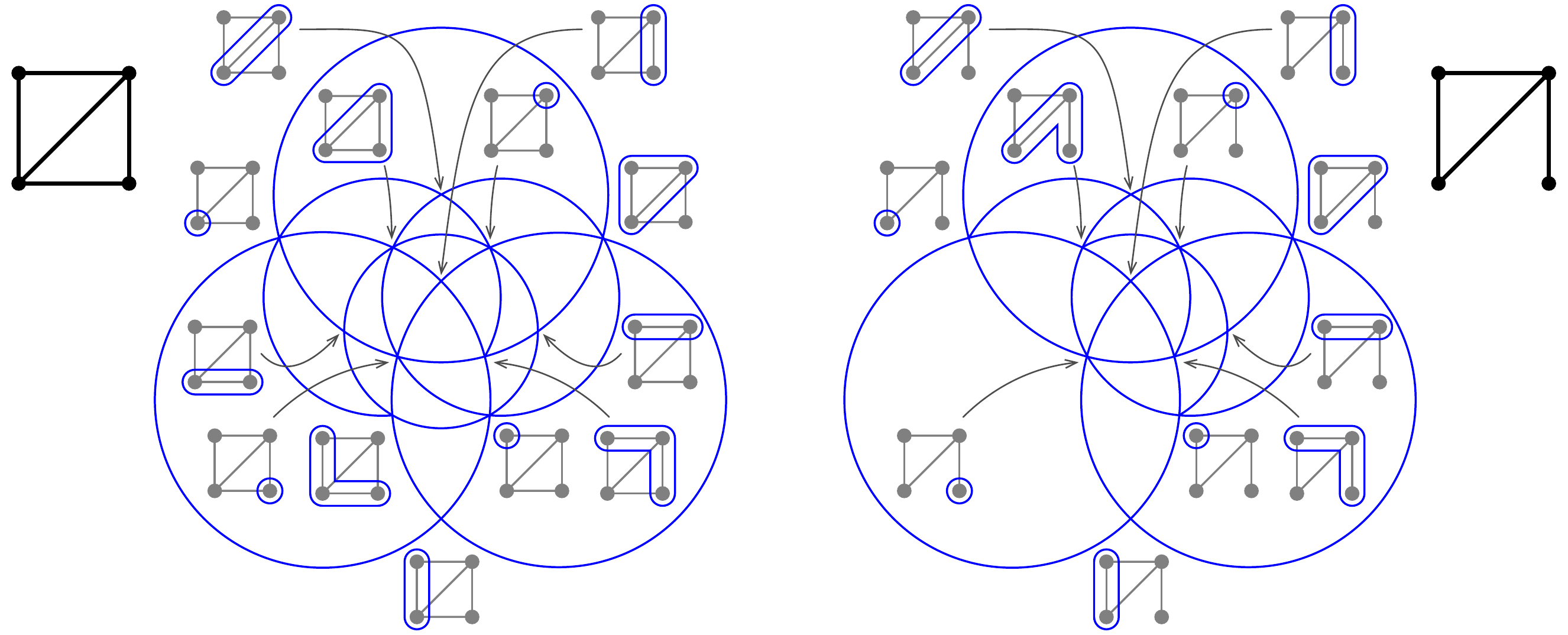}}
	\caption{Two graphical nested fans. The rays are labeled by the corresponding tubes. As the fans are $3$-dimensional, we intersect them with the sphere and stereographically project them from the direction~$(-1,-1,-1)$.}
	\label{fig:graphicalNestedFans}
\end{figure}

\subsubsection{Graph associahedron}
The following statement is proved in~\cite{CarrDevadoss, Devadoss, Postnikov, FeichtnerSturmfels, Zelevinsky}.
For a subset~$U \subseteq \ground$, denote by~$\triangle_U \eqdef \conv\set{\b{e}_u}{u \in U}$ the face of the standard simplex~$\triangle_\ground$ corresponding to~$U$.

\begin{figure}
	\capstart
	\centerline{\includegraphics[scale=.55]{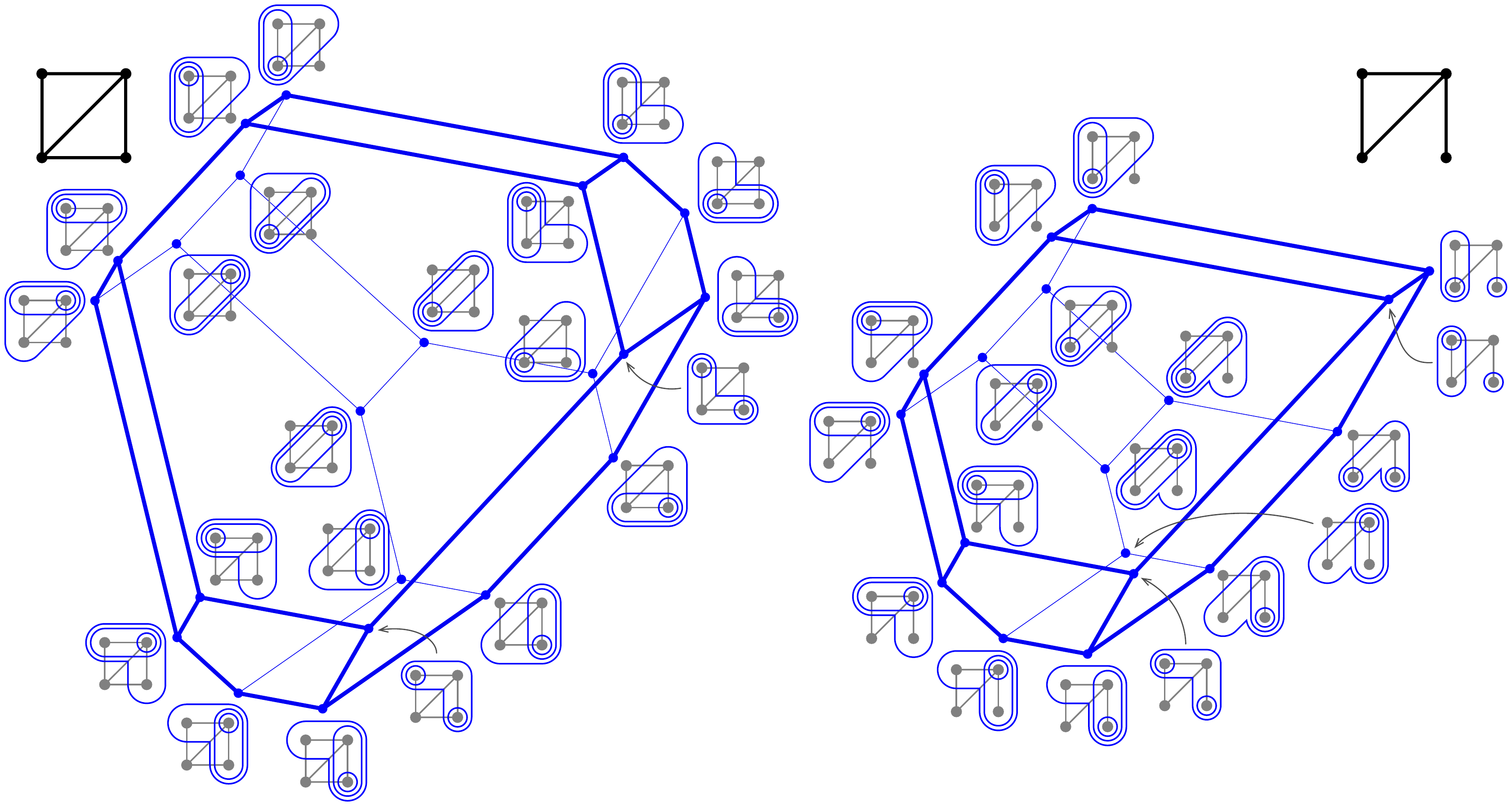}}
	\caption{Two graph associahedra, realizing the graphical nested fans of \cref{fig:graphicalNestedFans}. The vertices are labeled by the corresponding maximal tubings.}
	\label{fig:graphAssociahedra}
\end{figure}

\begin{theorem}[\cite{CarrDevadoss, Devadoss, Postnikov, FeichtnerSturmfels, Zelevinsky}]
\label{thm:graphAssociahedron}
For any graph~$\graphG$, the nested fan~$\nestedFan[\graphG]$ is the normal fan of a polytope.
For instance, $\nestedFan[\graphG]$ is the normal fan of
\begin{enumerate}[(i)]
\item the intersection of~$\HH$ with the hyperplanes~$\dotprod{\gvector{\tube}}{\b{x}} \le -3^{|\tube|}$ for all tubes~$\tube \in \tubes$~\cite{Devadoss},
\item the Minkowski sum~$\sum_{\tube \in \tubes} \triangle_{\tube}$ of the faces of the standard simplex given by all tubes of~$\graphG$~\cite{Postnikov}.
\end{enumerate}
\end{theorem}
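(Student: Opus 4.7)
The plan is to apply the polytopality criterion of \cref{prop:characterizationPolytopalFan} to the essential complete simplicial fan~$\nestedFan[\graphG]$ from \cref{thm:graphicalNestedFan}. This reduces the claim, for each of the two candidate height vectors, to verifying that every wall-crossing inequality is strict.

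A common first step is to identify the linear dependence at a wall. Suppose that two adjacent maximal tubings differ by exchanging tubes~$\tube$ and~$\tube'$; maximality forces~$\tube$ and~$\tube'$ to be incompatible, meaning they either intersect with neither nested, or are disjoint but adjacent. In both cases~$\tube \cup \tube'$ is a tube and every connected component of~$\tube \cap \tube'$ is a tube. Projecting the characteristic-vector identity $\chi_\tube + \chi_{\tube'} = \chi_{\tube \cup \tube'} + \chi_{\tube \cap \tube'}$ onto~$\HH$ then yields the unique linear dependence
\[
\gvector{\tube} + \gvector{\tube'} \;=\; \gvector{\tube \cup \tube'} + \sum_{s \in \connectedComponents(\tube \cap \tube')} \gvector{s},
\]
with the understanding that any $\b{g}$-vector indexed by a connected component of~$\graphG$ vanishes and drops out. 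The coefficients of~$\gvector{\tube}$ and~$\gvector{\tube'}$ both equal~$1$, summing to~$2$ as \cref{prop:characterizationPolytopalFan} demands.

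For construction~(i), I would plug~$\b{h}_\tube = -3^{|\tube|}$ into the wall-crossing inequality derived from this dependence. After rearrangement, the inequality reduces to
\[
3^{|\tube \cup \tube'|} + \sum_{s \in \connectedComponents(\tube \cap \tube')} 3^{|s|} \;>\; 3^{|\tube|} + 3^{|\tube'|}.
\]
Incompatibility precludes nesting, so $|\tube \cup \tube'| \ge \max(|\tube|,|\tube'|) + 1$, and the elementary bound $3 \cdot 3^{\max(a,b)} > 3^a + 3^b$ closes the case; the right-hand sum, when nonempty, only provides additional slack.

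For construction~(ii), I would compute heights as support functions: since $\triangle_U$ has vertices~$\b{e}_u$, its support in direction~$\chi_t$ equals $1$ if $U \cap t \ne \varnothing$ and $0$ otherwise, so $\sum_\tube \triangle_\tube$ has height $\b{h}_t = |\{\tau \in \tubes : \tau \cap t \ne \varnothing\}|$ in direction~$\chi_t$. Grouping the wall-crossing inequality by~$\tau$, it rewrites as a positivity statement of the form
\[
\sum_{\tau \in \tubes} \Big( [\tau \text{ meets both } \tube \text{ and } \tube'] \;-\; |\{s \in \connectedComponents(\tube \cap \tube') : \tau \cap s \ne \varnothing\}| \Big) \;>\; 0,
\]
where $[\cdot]$ denotes the Iverson bracket. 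The main obstacle is that individual summands can be negative when a connected~$\tau$ meets two or more components of~$\tube \cap \tube'$. I would compensate these by an injection that assigns to each problematic~$\tau$ meeting $k \ge 2$ components several auxiliary tubes, obtained by splicing sub-arcs of~$\tau$ across components of~$\tube \cap \tube'$ with fragments inside~$\tube$ or~$\tube'$; strict positivity would then come from the unmatched contribution of~$\tube \cup \tube'$ itself, which meets both~$\tube$ and~$\tube'$. An attractive alternative, avoiding these combinatorial gymnastics, is to invoke that the normal fan of a Minkowski sum equals the common refinement of the normal fans of its summands, and then to verify directly that the common refinement of the normal fans of the simplices~$\triangle_\tube$ for~$\tube \in \tubes$ coincides with~$\nestedFan[\graphG]$, by checking that each maximal cone~$\R_{\ge 0}\gvectors{\tubing}$ selects a compatible vertex of each~$\triangle_\tube$.
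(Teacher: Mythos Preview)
The paper does not prove \cref{thm:graphAssociahedron} itself; it cites it. The closest thing to an in-paper argument is \cref{exm:constructionsGraphicalNestedFan}, which verifies both height functions via \cref{coro:typeConeGraphicalNestedFan}. Your treatment of~(i) is exactly that argument and is fine.

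For~(ii) there is a genuine gap. You compute the support function of $\sum_\tau \triangle_\tau$ in direction~$+\chi_t$, obtaining $\b{h}_t = |\{\tau \in \tubes : \tau \cap t \ne \varnothing\}|$. But these are not the facet heights in the paper's convention: the facet-defining inequalities of $\sum_\tau \triangle_\tau$ are of the form $\sum_{v \in t} x_v \ge |\{s \in \tubes : s \subseteq t\}|$, so the relevant height vector is $\b{h}_t = -|\{s \in \tubes : s \subseteq t\}|$, as the paper uses in \cref{exm:constructionsGraphicalNestedFan}\,(ii). With that choice, inclusion--exclusion collapses the wall-crossing quantity to
\[
|\{s \in \tubes : s \subseteq \tube \cup \tube',\; s \not\subseteq \tube,\; s \not\subseteq \tube'\}|,
\]
which is strictly positive because $\tube \cup \tube'$ itself is such a tube --- a one-line proof.

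Your height function, by contrast, yields a wall-crossing sum whose terms can be negative, and the vague injection you propose cannot succeed: already for the path $1\text{--}2\text{--}3$ with $\tube=\{1,2\}$ and $\tube'=\{2,3\}$ your sum evaluates to $5+5-6-4=0$, not a strictly positive number. This is not a matter of missing cleverness in the injection; it reflects that your~$\b{h}$ describes a strictly larger polytope than the Minkowski sum (you have imposed only the ``$\le$'' support inequalities, which are not the facets). The fix is to switch to the height function counting \emph{contained} tubes rather than \emph{intersecting} tubes, after which the argument is immediate.

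Your alternative via the common refinement of the normal fans of the~$\triangle_\tau$ is a valid route and is essentially Postnikov's original approach, but it is no shorter than the type-cone computation once the right height function is in hand.
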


\begin{definition}
Any polytope whose normal fan is the nested fan~$\nestedFan[\graphG]$ is called \defn{graph associahedron} and denoted by~$\Asso[\graphG]$.
\end{definition}

For instance, \cref{fig:graphAssociahedra} represents the graph associahedra realizing the graphical nested fans of \cref{fig:graphicalNestedFans} and obtained using the construction (ii) of \cref{thm:graphAssociahedron}.

\begin{remark}
For instance,
\begin{enumerate}[(i)]
\item for the complete graph~$\graphG[K]_n$, the tubes are all non-empty subsets of~$[n]$, the tubings correspond to ordered partitions of~$[n]$, the maximal tubings correspond to permutations of~$[n]$, the graphical nested fan~$\nestedFan[{\graphG[K]_n}]$ is the classical \defn{braid fan}, and the graph associahedron~$\Asso[{\graphG[K]_n}]$ is the classical \defn{permutahedron} (see \eg \cite{Ziegler-polytopes, Hohlweg}),
\item for the path~$\graphG[P]_n$, the tubes are all non-empty intervals of~$[n]$, the tubings correspond to Schr\"oder trees on with $n+1$ leaves, the maximal tubings correspond to binary trees with $n+1$ leaves, the graphical nested fan~$\nestedFan[{\graphG[P]_n}]$ is the classical \defn{sylvester fan}, and the graph associahedron~$\Asso[{\graphG[P]_n}]$ is the classical \defn{associahedron} (see~\cite{ShniderSternberg,Loday}).
\end{enumerate}
\end{remark}


\pagebreak
\subsection{Exchangeable tubes and $\b{g}$-vector dependences}
\label{subsec:exchangeableTubes}

The next statement \mbox{follows from~\cite{MannevillePilaud-compatibilityFans, Zelevinsky}.}

\begin{proposition}
\label{prop:exchangeablePairsGraphicalNestedFan}
Let~$\tube, \tube'$ be two tubes of~$\graphG$. Then
\begin{enumerate}[(i)]
\item The tubes~$\tube$ and~$\tube'$ are exchangeable in~$\nestedFan[\graphG]$ if and only if $\tube'$ has a unique neighbor~$v$ in~$\tube \ssm \tube'$ and $\tube$ has a unique neighbor~$v'$ in~$\tube' \ssm \tube$.
\item For any adjacent maximal tubings~$\tubing, \tubing'$ on~$\graphG$ with $\tubing \ssm \{\tube\} = \tubing' \ssm \{\tube'\}$, both~$\tubing$ and~$\tubing'$ contain the tube~$\tube \cup \tube'$ and the connected components of~$\tube \cap \tube'$.
\item The linear dependence between the $\b{g}$-vectors of~$\tubing \cup \tubing'$ is given by
\[
\gvector{\tube} + \gvector{\tube'} = \gvector{\tube \cup \tube'} + \sum_{\tube[s] \in \connectedComponents(\tube \cap \tube')} \gvector{\tube[s]}.
\]
In particular, it only depends on the exchanged tubes~$\tube$ and~$\tube'$, not on the tubings~$\tubing$ and~$\tubing'$.
\end{enumerate}
\end{proposition}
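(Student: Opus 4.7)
The plan is to establish (iii) first, since it is essentially arithmetic, and then to derive (i) and (ii) from the combinatorial structure of adjacent maximal tubings.

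For (iii), I would start from the characteristic-vector identity in $\R^\ground$,
\[
\mathbf{1}_\tube + \mathbf{1}_{\tube'} = \mathbf{1}_{\tube \cup \tube'} + \mathbf{1}_{\tube \cap \tube'} = \mathbf{1}_{\tube \cup \tube'} + \sum_{\tube[s] \in \connectedComponents(\tube \cap \tube')} \mathbf{1}_{\tube[s]},
\]
where the second equality is the decomposition of the intersection into its connected components. Applying the projection $\pi$ and using $\gvector{\tube} = \pi(\mathbf{1}_\tube)$ yields the claimed linear dependence, provided $\tube \cup \tube'$ is itself a tube. This connectedness follows from the unique-neighbor condition in (i): the vertex $v \in \tube \ssm \tube'$ adjacent to some vertex of $\tube'$ (or, when $\tube \cap \tube' \ne \varnothing$, the intersection itself) links the two tubes together. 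Uniqueness of the dependence, up to scaling, follows from the simpliciality of $\nestedFan[\graphG]$ established in \cref{thm:graphicalNestedFan}.

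For the ``only if'' direction of (i), I would suppose $\tube$ and $\tube'$ are exchangeable via adjacent maximal tubings $\tubing, \tubing'$ with common part $\tubing_0 \eqdef \tubing \cap \tubing'$. Both $\tube$ and $\tube'$ are compatible with every element of $\tubing_0$ yet incompatible with each other. Let $P$ be the smallest tube of $\tubing_0 \cup \connectedComponents(\graphG)$ containing $\tube \cup \tube'$, and let $B_1, \dots, B_k$ be the maximal tubes of $\tubing_0$ strictly contained in $P$ and meeting $\tube \cup \tube'$. Maximality forces $\tube$ to consist of some of the $B_i$'s together with a single connected ``patch'' in $P \ssm \bigcup B_i$, and similarly for $\tube'$; comparing these two patches and exploiting $\tube \ne \tube'$ produces the unique neighbors $v, v'$. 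For the ``if'' direction, I would build a maximal tubing $\tubing$ containing $\tube$, $\tube \cup \tube'$, and each connected component of $\tube \cap \tube'$ by greedy extension, and then verify that swapping $\tube$ for $\tube'$ preserves pairwise compatibility, producing the sought adjacent tubing $\tubing'$.

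For (ii), once (i) is available, both $\tube \cup \tube'$ and each connected component of $\tube \cap \tube'$ are pairwise compatible with every tube of $\tubing_0$, since they sit between $P$ and the $B_i$'s in the nesting hierarchy. By maximality of $\tubing$ and $\tubing'$, they must then already belong to $\tubing_0$.

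The main obstacle is the ``only if'' direction of (i): pinning down exactly how the local maximality of $\tubing$ and $\tubing'$ constrains $\tube$ and $\tube'$ to differ by a single neighbor on each side, ruling out configurations with ``extra'' neighbors that would force distinct surrounding tubes in $\tubing$ versus $\tubing'$, requires careful case analysis. Everything else is essentially bookkeeping once this combinatorial heart is in place.
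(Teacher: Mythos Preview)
Your treatment of (iii) is exactly what the paper does: the same characteristic-vector identity $\mathbf{1}_\tube + \mathbf{1}_{\tube'} = \mathbf{1}_{\tube\cup\tube'} + \mathbf{1}_{\tube\cap\tube'}$, decomposed into connected components and pushed through the projection~$\pi$. So on that point the proposals coincide.

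For (i) and (ii), however, the paper does not give a proof at all: it simply cites \cite{MannevillePilaud-compatibilityFans}. Your proposal thus goes further than the paper's own argument by attempting a self-contained sketch. That is a legitimate and useful difference, but a few comments on the sketch itself. Your ``only if'' argument for (i) is the soft spot, as you yourself flag. The assertion that maximality forces $\tube$ to be ``some of the $B_i$'s together with a single connected patch in $P \ssm \bigcup B_i$'' is not quite the right picture and would be hard to make precise as stated. A cleaner organizing device is the \emph{root} of a tube in a maximal tubing (the unique vertex~$\rootset{\tube[s]}{\tubing} = \tube[s] \ssm \bigcup\{\tube[r] \in \tubing : \tube[r] \subsetneq \tube[s]\}$): for adjacent $\tubing,\tubing'$ with parent~$P$, the two pivots $v = \rootset{P}{\tubing'}$ and $v' = \rootset{P}{\tubing}$ are forced, and one then checks directly that $v$ is the unique neighbor of $\tube'$ in $\tube \ssm \tube'$ (any block $C$ with $\leaving{\tube}{C}{P}$ must contain $v'$, else it contradicts minimality of $P$). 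This is essentially the argument the paper gives later in the general building-set setting (\cref{prop:exchangeablePairsNestedFan}), and it avoids the loose ``patch'' language. Your ``if'' direction and your argument for (ii) are fine in outline.
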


\begin{proof}
Points~(i) and~(ii) were proved in~\cite{MannevillePilaud-compatibilityFans}. Point~(iii) follows from the fact that
\[
\sum_{v \in \tube} \b{e}_v + \sum_{v \in \tube'} \b{e}_v = \sum_{v \in \tube \cup \tube'} \b{e}_v + \sum_{v \in \tube \cap \tube'} \b{e}_v = \sum_{v \in \tube \cup \tube'} \b{e}_v + \sum_{\tube[s] \in \connectedComponents(\tube \cap \tube')} \sum_{v \in \tube[s]} \b{e}_v.
\qedhere
\]
\end{proof}

For instance, the two tubes on the left of \cref{fig:exmNestedGraphical} are exchangeable, while the two tubes on the middle of \cref{fig:exmNestedGraphical} are not.


\subsection{Type cone of graphical nested fans}
\label{subsec:typeConeGraphicalNestedFans}

As a direct consequence of \cref{prop:exchangeablePairsGraphicalNestedFan}, we obtain the following (possibly redundant) description of the type cone of the graphical nested fan~$\nestedFan[\graphG]$.

\begin{corollary}
\label{coro:typeConeGraphicalNestedFan}
For any graph~$\graphG$, the type cone of the nested fan~$\nestedFan[\graphG]$ is given by
\[
\typeCone(\nestedFan[\graphG]) = \set{\b{h} \in \R^{\tubes}}{\begin{array}{l} \b{h}_{K} = 0 \text{ for any connected component } K \in \connectedComponents(\graphG) \text{ and}\\ \b{h}_{\tube} + \b{h}_{\tube'} > \b{h}_{\tube \cup \tube'} + \sum_{\tube[s] \in \connectedComponents(\tube \cap \tube')} \b{h}_{\tube[s]} \text{ for any exchangeable tubes } \tube, \tube' \end{array}}.
\]
\end{corollary}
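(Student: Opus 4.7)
The statement is a direct application of the general type cone characterization (\cref{prop:characterizationPolytopalFan}) to the graphical setting, with \cref{prop:exchangeablePairsGraphicalNestedFan} supplying both the characterization of adjacency and the explicit wall-crossing linear dependence. First I would invoke \cref{thm:graphicalNestedFan} to identify~$\nestedFan[\graphG]$ as an essential complete simplicial fan in~$\HH$, so that \cref{prop:characterizationPolytopalFan} applies. Since $\gvector{K} = \b{0}$ for every~$K \in \connectedComponents(\graphG)$, these zero vectors are not rays of~$\nestedFan[\graphG]$ and the genuine rays correspond to the tubes in~$\tubes \ssm \connectedComponents(\graphG)$. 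To make the parametrization uniform over all~$\tube \in \tubes$ as in the statement, I would adopt the normalization $\b{h}_K = 0$ for each~$K \in \connectedComponents(\graphG)$; this is harmless since the row of~$\b{G}$ indexed by such a~$K$ is the zero vector, so the associated inequality in $\b{G}\b{x} \le \b{h}$ is vacuous and its height can be set to zero without loss of generality.

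Next I would apply \cref{prop:characterizationPolytopalFan} to each pair of adjacent maximal tubings $\tubing, \tubing'$ with $\tubing \ssm \{\tube\} = \tubing' \ssm \{\tube'\}$. By \cref{prop:exchangeablePairsGraphicalNestedFan}(iii), the unique linear dependence among the $\b{g}$-vectors of~$\tubing \cup \tubing'$ is
\[
\gvector{\tube} + \gvector{\tube'} - \gvector{\tube \cup \tube'} - \sum_{\tube[s] \in \connectedComponents(\tube \cap \tube')} \gvector{\tube[s]} = \b{0},
\]
in which the coefficients of~$\gvector{\tube}$ and~$\gvector{\tube'}$ are both~$+1$ and therefore sum to~$2$, matching the normalization required by \cref{prop:characterizationPolytopalFan}. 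Reading off the corresponding wall-crossing inequality then directly yields
\[
\b{h}_\tube + \b{h}_{\tube'} > \b{h}_{\tube \cup \tube'} + \sum_{\tube[s] \in \connectedComponents(\tube \cap \tube')} \b{h}_{\tube[s]}.
\]

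To conclude, I would combine \cref{prop:exchangeablePairsGraphicalNestedFan}(i) (the characterization of exchangeable pairs) with the crucial fact, also from part~(iii), that the dependence above depends only on the exchanged pair~$(\tube, \tube')$ and not on the ambient tubings; this collapses the \apriori family of wall-crossing inequalities—one per ordered pair of adjacent maximal cones—to a single inequality per exchangeable pair, which is exactly the description given by the corollary. There is no substantive obstacle: the only point meriting care is the bookkeeping when~$\tube \cup \tube'$ or some component of~$\tube \cap \tube'$ happens to be a connected component of~$\graphG$, in which case its $\b{g}$-vector vanishes from the underlying linear dependence, but the convention~$\b{h}_K = 0$ keeps the written form of the stated inequality unchanged.
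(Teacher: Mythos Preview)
Your proposal is correct and follows exactly the approach the paper intends: the corollary is stated as a direct consequence of \cref{prop:exchangeablePairsGraphicalNestedFan}, and you have simply spelled out the routine bookkeeping (invoking \cref{thm:graphicalNestedFan} and \cref{prop:characterizationPolytopalFan}, handling the vanishing~$\gvector{K}$ via the normalization~$\b{h}_K = 0$, and using part~(iii) to collapse the wall-crossing inequalities to one per exchangeable pair). There is nothing to add or correct.
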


We denote by~$\b{f}_{\tube}$ for~$\tube \in \tubes$ the canonical basis of~$\R^{\tubes}$ and by 
\[
\b{n}(\tube, \tube') \eqdef \b{f}_{\tube} + \b{f}_{\tube'} - \b{f}_{\tube\cup \tube'} - \sum_{\tube[s] \in \connectedComponents(\tube \cap \tube')} \b{f}_{\tube[s]}
\]
the inner normal vector of the inequality of the type cone~$\typeCone(\nestedFan[\graphG])$ corresponding to an exchangeable pair~$\{\tube, \tube'\}$ of tubes of~$\graphG$. Thus $\b{h} \in \typeCone(\nestedFan[\graphG])$ if and only if $
\dotprod{\b{n}(\tube, \tube')}{\b{h}} > 0$ for all exchangeable tubes~$\tube,\tube' \in \tubes$.

\begin{remark}
For instance,
\begin{enumerate}[(i)]
\item for the complete graph~$\graphG[K]_n$, the type cone~$\typeCone(\nestedFan[{\graphG[K]_n}])$ is formed by all strict submodular functions, \ie functions~$\b{h} : 2^{[n]} \to \R$ such that~$\b{h}_{\varnothing} = 0 = \b{h}_{[n]}$ and~${\b{h}_{A} + \b{h}_{B} > \b{h}_{A \cap B} + \b{h}_{A \cup B}}$ for any~$A, B \subseteq [n]$. The inequalities~$\b{h}_{U \ssm \{v\}} + \b{h}_{U \ssm \{v'\}} > \b{h}_{U} + \b{h}_{U \ssm \{v,v'\}}$ for~$v,v' \in \ground$ and~$\{v,v'\} \subseteq U \subseteq \ground$ clearly imply all submodular inequalities. The closure of the type cone~$\typeCone(\nestedFan[{\graphG[K]_n}])$ is the set of deformed permutahedra (or generalized permutahedra) studied by A.~Postnikov in~\cite{Postnikov} and E.-M.~Feichtner and B.~Sturmfels in~\cite{FeichtnerSturmfels}.
\item for the path~$\graphG[P]_n$, the type cone~$\typeCone(\nestedFan[{\graphG[P]_n}])$ is formed by the functions ${\b{h} : \set{[i,j]}{1 \le i \le j \le n} \mapsto \R}$ such that~$\b{h}_{[1,n]} = 0 = \b{h}_{\{i\}}$ for all~$i \in [n]$ and~$\b{h}_{[i,j]} + \b{h}_{[k,\ell]} > \b{h}_{[i,\ell]} + \b{h}_{[k,j]}$ for all~$1 \le i \le j \le n$ and~$1 \le k \le \ell \le n$ such that~$i < k$, $j < \ell$ and~$k \le j+1$ (where~$\b{h}_{[k,j]} = 0$ if~$k = j+1$).
\end{enumerate}
\end{remark}

\pagebreak

\begin{example}
\label{exm:typeConeGraphicalNestedFan}
Consider the graphical nested fans illustrated in \cref{fig:graphicalNestedFans}.
The type cone of the left fan lives in~$\R^{13}$, has a lineality space of dimension~$3$ and $19$ facet-defining inequalities (given below). In particular, it is not simplicial.
Note that as in \cref{fig:graphicalNestedFans}, we express the $\b{g}$-vectors in the basis given by the maximal tubing containing the first three tubes below.

\bigskip
\centerline{$
\begin{array}{r|c@{\;\,}c@{\;\,}c@{\;\,}c@{\;\,}c@{\;\,}c@{\;\,}c@{\;\,}c@{\;\,}c@{\;\,}c@{\;\,}c@{\;\,}c@{\;\,}c}
\text{tubes} & \raisebox{-.4cm}{\includegraphics[scale=.6]{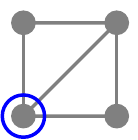}} & \raisebox{-.4cm}{\includegraphics[scale=.6]{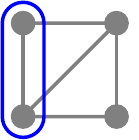}} & \raisebox{-.4cm}{\includegraphics[scale=.6]{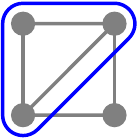}} & \raisebox{-.4cm}{\includegraphics[scale=.6]{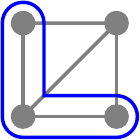}} & \raisebox{-.4cm}{\includegraphics[scale=.6]{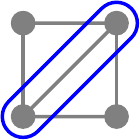}} & \raisebox{-.4cm}{\includegraphics[scale=.6]{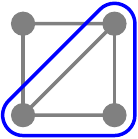}} & \raisebox{-.4cm}{\includegraphics[scale=.6]{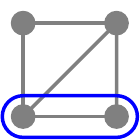}} & \raisebox{-.4cm}{\includegraphics[scale=.6]{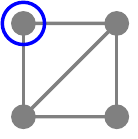}} & \raisebox{-.4cm}{\includegraphics[scale=.6]{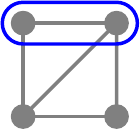}} & \raisebox{-.4cm}{\includegraphics[scale=.6]{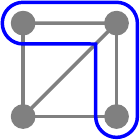}} & \raisebox{-.4cm}{\includegraphics[scale=.6]{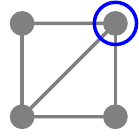}} & \raisebox{-.4cm}{\includegraphics[scale=.6]{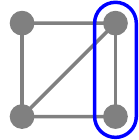}} & \raisebox{-.4cm}{\includegraphics[scale=.6]{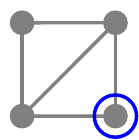}}  \\[.6cm]
\text{$\b{g}$-vectors} & \compactVectorT{1}{0}{0} & \compactVectorT{0}{1}{0} & \compactVectorT{0}{0}{1} & \compactVectorT{0}{1}{-1} & \compactVectorT{1}{-1}{1} & \compactVectorT{1}{-1}{0} & \compactVectorT{1}{0}{-1} & \compactVectorT{-1}{1}{0} & \compactVectorT{-1}{0}{1} & \compactVectorT{-1}{0}{0} & \compactVectorT{0}{-1}{1} & \compactVectorT{0}{-1}{0} & \compactVectorT{0}{0}{-1} \\[.6cm]
\text{facet}		& 0 & 0 & 0 & 0 & 0 & 0 & 0 & 0 & 0 & 0 & 1 & -1 & 1 \\
\text{defining}		& 0 & 0 & 0 & 0 & 0 & 1 & 0 & 0 & 0 & 1 & 0 & -1 & 0 \\
\text{inequalities}	& -1 & 1 & 0 & -1 & 0 & 0 & 1 & 0 & 0 & 0 & 0 & 0 & 0 \\
				& 0 & 0 & 0 & 0 & 0 & 0 & 0 & 0 & 1 & -1 & -1 & 1 & 0 \\
				& 0 & 0 & 0 & 0 & 0 & -1 & 1 & 0 & 0 & 0 & 0 & 1 & -1 \\
				& -1 & 1 & -1 & 0 & 1 & 0 & 0 & 0 & 0 & 0 & 0 & 0 & 0 \\
				& -1 & 0 & 0 & 0 & 1 & -1 & 1 & 0 & 0 & 0 & 0 & 0 & 0 \\
				& 0 & 1 & -1 & 0 & 0 & 0 & 0 & -1 & 1 & 0 & 0 & 0 & 0 \\
				& 0 & 0 & -1 & 0 & 1 & 0 & 0 & 0 & 1 & 0 & -1 & 0 & 0 \\
				& 0 & 0 & 0 & 0 & 1 & -1 & 0 & 0 & 0 & 0 & -1 & 1 & 0 \\
				& 0 & 0 & 1 & 0 & 0 & 0 & 0 & 0 & -1 & 1 & 0 & 0 & 0 \\
				& 0 & 0 & 1 & 0 & -1 & 1 & 0 & 0 & 0 & 0 & 0 & 0 & 0 \\
				& 0 & 0 & 0 & 0 & 0 & 0 & 0 & 1 & -1 & 0 & 1 & 0 & 0 \\
				& 1 & 0 & 0 & 0 & 0 & 0 & -1 & 0 & 0 & 0 & 0 & 0 & 1 \\
				& 1 & 0 & 0 & 0 & -1 & 0 & 0 & 0 & 0 & 0 & 1 & 0 & 0 \\
				& 1 & -1 & 0 & 0 & 0 & 0 & 0 & 1 & 0 & 0 & 0 & 0 & 0 \\
				& 0 & 0 & 0 & 1 & 0 & 0 & 0 & -1 & 0 & 1 & 0 & 0 & -1 \\
				& 0 & 0 & 0 & 1 & 0 & 1 & -1 & 0 & 0 & 0 & 0 & 0 & 0 \\
				& 0 & -1 & 1 & 1 & 0 & 0 & 0 & 0 & 0 & 0 & 0 & 0 & 0 \\[.2cm]
\end{array}
$}

\bigskip
\noindent
The type cone of the right fan lives in~$\R^{11}$, has a lineality space of dimension~$3$ and $12$ facet-defining inequalities (given below). In particular, it is not simplicial.
Note that as in \cref{fig:graphicalNestedFans}, we express the $\b{g}$-vectors in the basis given by the maximal tubing containing the first three tubes~below.

\[
\begin{array}{r|c@{\;\,}c@{\;\,}c@{\;\,}c@{\;\,}c@{\;\,}c@{\;\,}c@{\;\,}c@{\;\,}c@{\;\,}c@{\;\,}c@{\;\,}c@{\;\,}c}
\text{tubes} & \raisebox{-.4cm}{\includegraphics[scale=.6]{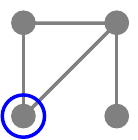}} & \raisebox{-.4cm}{\includegraphics[scale=.6]{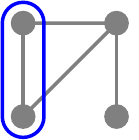}} & \raisebox{-.4cm}{\includegraphics[scale=.6]{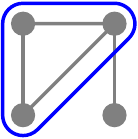}} & \raisebox{-.4cm}{\includegraphics[scale=.6]{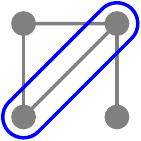}} & \raisebox{-.4cm}{\includegraphics[scale=.6]{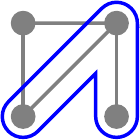}} & \raisebox{-.4cm}{\includegraphics[scale=.6]{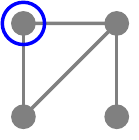}} & \raisebox{-.4cm}{\includegraphics[scale=.6]{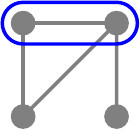}} & \raisebox{-.4cm}{\includegraphics[scale=.6]{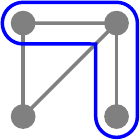}} & \raisebox{-.4cm}{\includegraphics[scale=.6]{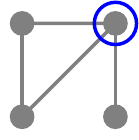}} & \raisebox{-.4cm}{\includegraphics[scale=.6]{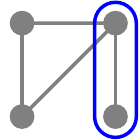}} & \raisebox{-.4cm}{\includegraphics[scale=.6]{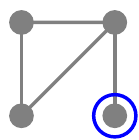}} \\[.6cm]
\text{$\b{g}$-vectors} & \compactVectorT{1}{0}{0} & \compactVectorT{0}{1}{0} & \compactVectorT{0}{0}{1} & \compactVectorT{1}{-1}{1} & \compactVectorT{1}{-1}{0} & \compactVectorT{-1}{1}{0} & \compactVectorT{-1}{0}{1} & \compactVectorT{-1}{0}{0} & \compactVectorT{0}{-1}{1} & \compactVectorT{0}{-1}{0} & \compactVectorT{0}{0}{-1} \\[.6cm]
\text{facet}		& -1 & 1 & -1 & 1 & 0 & 0 & 0 & 0 & 0 & 0 & 0 \\
\text{defining}		& 1 & -1 & 0 & 0 & 0 & 1 & 0 & 0 & 0 & 0 & 0 \\
\text{inequalities}	& 0 & 1 & -1 & 0 & 0 & -1 & 1 & 0 & 0 & 0 & 0 \\
				& 1 & 0 & 0 & -1 & 0 & 0 & 0 & 0 & 1 & 0 & 0 \\
				& 0 & 0 & -1 & 1 & 0 & 0 & 1 & 0 & -1 & 0 & 0 \\
				& 0 & 0 & 0 & 1 & -1 & 0 & 0 & 0 & -1 & 1 & 0 \\
				& 0 & 0 & 0 & 0 & 0 & 1 & -1 & 0 & 1 & 0 & 0 \\
				& 0 & 0 & 0 & 0 & 0 & 0 & 1 & -1 & -1 & 1 & 0 \\
				& 0 & 0 & 0 & 0 & 0 & 0 & 0 & 0 & 1 & -1 & 1 \\
				& 0 & 0 & 0 & 0 & 1 & 0 & 0 & 1 & 0 & -1 & 0 \\
				& 0 & 0 & 1 & 0 & 0 & 0 & -1 & 1 & 0 & 0 & 0 \\
				& 0 & 0 & 1 & -1 & 1 & 0 & 0 & 0 & 0 & 0 & 0 \\[.2cm]
\end{array}
\]

\end{example}

\begin{example}
\label{exm:constructionsGraphicalNestedFan}
We can exploit \cref{coro:typeConeGraphicalNestedFan} to show that certain height functions belong to the type cone of~$\nestedFan[\graphG]$ and recover some classical constructions of the graph associahedron.

\begin{enumerate}[(i)]
\item Consider the height function~$\b{h} \in \R^{\tubes}$ given by~$\b{h}_{\tube} \eqdef -3^{|\tube|}$. Then for any exchangeable tubes~$\tube$ and~$\tube'$, we have
\[
\qquad\qquad
\dotprod{\b{n}(\tube, \tube')}{\b{h}} = - 3^{|\tube|} - 3^{|\tube'|} + 3^{|\tube \cup \tube'|} + \sum_{\tube[s] \in \connectedComponents(\tube \cap \tube')} 3^{|\tube[s]|} \ge - 2 \cdot 3^{|\tube \cup \tube'|-1} + 3^{|\tube \cup \tube'|} > 0.
\]
Therefore, the height function~$\b{h}$ belongs to the type cone~$\typeCone(\nestedFan[\graphG])$.
The corresponding polytope~$P_\b{h} \eqdef \bigset{\b{x} \in \R^\ground}{\dotprod{\gvector{\tube}}{\b{x}} \le \b{h}_{\tube} \text{ for } \tube \in \tubes}$ is the graph associahedron constructed by S.~Devadoss's in~\cite{Devadoss}.
\item Consider the height function~$\b{h} \in \R^{\tubes}$ given by~$\b{h}_{\tube} \eqdef -|\bigset{\tube[s] \in \tubes}{\tube[s] \subseteq \tube}|$. Then for any exchangeable tubes~$\tube$ and~$\tube'$, we have
\[
\qquad\qquad
\dotprod{\b{n}(\tube, \tube')}{\b{h}} = |\bigset{\tube[s]\in\tubes}{\tube[s] \not\subseteq \tube \text{ and } \tube[s] \not\subseteq \tube' \text{ but } \tube[s] \subseteq \tube \cup \tube'}| > 0
\]
since~$\tube \cup \tube'$ fulfills the conditions on~$\tube[s]$.
Therefore, the height function~$\b{h}$ belongs to the type cone~$\typeCone(\nestedFan[\graphG])$.
The corresponding polytope~${P_\b{h} \eqdef \bigset{\b{x} \in \R^\ground}{\dotprod{\gvector{\tube}}{\b{x}} \le \b{h}_{\tube} \text{ for } \tube \in \tubes}}$ is the graph associahedron constructed by A.~Postnikov's in~\cite{Postnikov}.
\end{enumerate}
\end{example}

Note that many inequalities of \cref{coro:typeConeGraphicalNestedFan} are redundant.
In the remaining of this section, we describe the facet-defining inequalities of the type cone of the graphical nested fans.
We say that an exchangeable pair~$\{\tube, \tube'\}$ of tubes of~$\graphG$ is
\begin{itemize}
\item \defn{extremal} if its  corresponding inequality in \cref{coro:typeConeGraphicalNestedFan} defines a facet of~$\typeCone(\nestedFan[\graphG])$,
\item \defn{maximal} if~${\tube \ssm \{v\} = \tube' \ssm \{v'\}}$ for some neighbor~$v$ of~$\tube'$ and some neighbor~$v'$ of~$\tube$.
\end{itemize}
We can now state our main result on graphical nested complexes.

\begin{theorem}
\label{thm:extremalExchangeablePairsGraphicalNestedFan}
An exchangeable pair is extremal if and only if it is maximal.
\end{theorem}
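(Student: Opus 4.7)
By Corollary~3.6, the deformation cone $\ctypeCone(\nestedFan[\graphG])$ is the polyhedron cut out by the linear conditions $\b{h}_K = 0$ for connected components $K$ of $\graphG$ and the wall-crossing inequalities $\dotprod{\b{n}(\tube_i, \tube_i')}{\b{h}} \ge 0$ for exchangeable pairs $\{\tube_i, \tube_i'\}$. The task is to show that, among these, the irredundant (facet-defining) inequalities are precisely those indexed by maximal exchangeable pairs.

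For \emph{non-maximal $\Rightarrow$ redundant}, the plan is to proceed by induction on $|\tube \symdif \tube'|$ and decompose $\b{n}(\tube, \tube')$ as a non-negative combination of wall-crossing normals for exchangeable pairs with strictly smaller symmetric difference. Given a non-maximal exchangeable pair, let $v \in \tube \ssm \tube'$ and $v' \in \tube' \ssm \tube$ be the unique neighbors from Proposition~3.2(i). The idea is to interpolate from $\tube$ to $\tube'$ along a sequence of maximal flips in the nested complex, using intermediate tubes such as $\tube \cup \{v'\}$ and $(\tube \ssm \{u\}) \cup \{v'\}$ for carefully chosen non-cut vertices $u \in \tube \ssm \tube'$ of $\tube$ satisfying that $v'$ still has a neighbor in $\tube \ssm \{u\}$. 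Such intermediate tubes yield a partial telescope of the form $\b{n}(\tube, (\tube \ssm \{u\}) \cup \{v'\}) + \b{n}(\tube \cup \{v'\}, \tube')$; comparing to $\b{n}(\tube, \tube')$ produces additional ``correction terms'' that themselves turn out to coincide with wall-crossing normals of smaller exchanges involving the intermediate unions and intersections. The example $\tube = \{a,b,c\}$, $\tube' = \{c,d,e\}$ in a path graph, where $\b{n}(\tube, \tube')$ expands into four maximal-exchange normals---two along the direct flip path $\tube \to \{b,c,d\} \to \tube'$ and two corrections $\b{n}(\{a,b,c,d\},\{b,c,d,e\})$ and $\b{n}(\{b,c\},\{c,d\})$---illustrates this pattern.

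For \emph{maximal $\Rightarrow$ extremal}, let $\{\tube, \tube'\}$ be a maximal exchangeable pair, with $\tube = S \cup \{v\}$ and $\tube' = S \cup \{v'\}$. The plan is to exhibit a height vector $\b{h}^\star \in \ctypeCone(\nestedFan[\graphG])$ saturating only the $\{\tube, \tube'\}$ inequality; the existence of such an $\b{h}^\star$ forces the inequality to be facet-defining. Geometrically, $\b{h}^\star$ corresponds to the polytope obtained from a sufficiently generic graph associahedron $P_{\b{h}^{(0)}}$ by contracting just the edge associated to the maximal exchange $\{\tube, \tube'\}$, whose direction is $\b{e}_v - \b{e}_{v'}$ (computed from the linear dependence in Proposition~3.2(iii), using that $v$ and $v'$ lie in the same connected component of $\graphG$). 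Concretely, starting from a generic height $\b{h}^{(0)}$ in the interior of $\typeCone(\nestedFan[\graphG])$---for instance, the Devadoss height from Example~3.8 perturbed to break ties---define $\b{h}^\star := \b{h}^{(0)} - \mu\, \b{f}_\tube$ with $\mu > 0$ the smallest value making the $\{\tube, \tube'\}$ inequality tight. Because decreasing $\b{h}_\tube$ only affects the slacks of wall-crossing inequalities whose pair involves $\tube$ (i.e., with $\tube \in \{\tube_i, \tube_i'\}$ or $\tube$ among the negatively supported tubes), genericity of $\b{h}^{(0)}$ ensures that exactly one such slack reaches zero first, and a choice of $\b{h}^{(0)}$ that is close (in the facet normal direction) to the $\{\tube, \tube'\}$ wall forces this first-hit facet to be the desired one.

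The main obstacle is the non-maximal step. Choosing the leaf vertex $u$ to guarantee both that $\tube \ssm \{u\}$ is a tube and that $(\tube \ssm \{u\}) \cup \{v'\}$ is a tube already requires some care, and handling the possibility that $\tube \cap \tube'$ is disconnected (so that the wall-crossing normal involves a sum over several components) complicates the bookkeeping further. More deeply, while the ``path-plus-corrections'' template succeeds for the four-term decomposition above, it is not obvious a priori that the correction terms remain non-negative when $|\tube \symdif \tube'|$ is large and the intermediate tubes proliferate. Verifying that a clean inductive decomposition always exists---or, equivalently, that the cone generated by maximal-exchange normals contains every wall-crossing normal---is the principal combinatorial burden.
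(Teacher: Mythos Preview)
Your proposal is a sketch with genuine gaps in both directions.

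\textbf{Non-maximal $\Rightarrow$ redundant.} Your decomposition is more complicated than needed, and you correctly flag that you cannot control the correction terms. The paper avoids this entirely with a clean two-term identity: if $w \ne v$ is any non-disconnecting vertex of $\tube \ssm \tube'$, then
\[
\b{n}(\tube, \tube') \;=\; \b{n}\big(\tube \ssm \{w\},\, \tube'\big) \;+\; \b{n}\big(\tube,\, (\tube \cup \tube') \ssm \{w\}\big),
\]
and both summands are wall-crossing normals of genuine exchangeable pairs. No corrections appear, and each summand has strictly smaller symmetric difference, so redundancy follows at once. The key is that the second intermediate tube is $(\tube \cup \tube') \ssm \{w\}$, a tube \emph{containing} $\tube'$, rather than one of the form $\tube \cup \{v'\}$ or $(\tube \ssm \{u\}) \cup \{v'\}$ that you propose; your choice is what forces the extra terms whose sign you cannot guarantee.

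\textbf{Maximal $\Rightarrow$ extremal.} Your argument is circular. Decreasing $\b{h}_\tube$ from a generic interior point will indeed hit \emph{some} supporting hyperplane first, but you give no reason it is the $\{\tube,\tube'\}$ hyperplane rather than one of the many other wall-crossing inequalities in which $\tube$ appears with positive coefficient. Saying ``start close to the $\{\tube,\tube'\}$ wall'' presupposes that this wall actually supports a facet of the type cone, which is precisely the claim to be proved. The paper takes a different route: it exhibits an explicit vector $\b{w} \in \R^{\tubes}$ (not a height in the type cone, just a separating witness) satisfying $\dotprod{\b{n}(\tube,\tube')}{\b{w}} < 0$ and $\dotprod{\b{n}(\tilde\tube,\tilde\tube')}{\b{w}} > 0$ for every other maximal exchangeable pair $\{\tilde\tube,\tilde\tube'\}$. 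Concretely, $\b{w} = \b{x} + \delta\,\b{y} + \varepsilon\,\b{z}$ for suitable $0 < \delta \ll \varepsilon < 1$, where $\b{x}, \b{y}, \b{z}$ are defined by counting tubes relative to the set $\alpha(\tube,\tube') = \set{\tube[s] \in \tubes}{\tube[s] \not\subseteq \tube,\; \tube[s] \not\subseteq \tube',\; \tube[s] \subseteq \tube \cup \tube'}$; the required sign pattern is then checked by a short case analysis. Combined with the first part (non-maximal normals are positive combinations of maximal ones), this shows the $\{\tube,\tube'\}$ inequality is irredundant.
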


\begin{proof}
We treat separately the two implications:

\para{Extremal $\implies$ maximal}
Consider an exchangeable pair~$\{\tube, \tube'\}$ of tubes of~$\graphG$.
By \cref{prop:exchangeablePairsGraphicalNestedFan}, $\tube'$ has a unique neighbor~$v$ in~$\tube \ssm \tube'$ and $\tube$ has a unique neighbor~$v'$ in~$\tube' \ssm \tube$.
Therefore, $\tube \ssm \tube'$ and~$\tube' \ssm \tube$ are both connected.
Assume that $\{\tube, \tube'\}$ is not maximal, for instance that~$\tube \ssm \tube' \ne \{v\}$, and let $w\ne v$ be a non-disconnecting node of $\tube \ssm \tube'$.
By \cref{prop:exchangeablePairsGraphicalNestedFan}, $\tilde \tube \eqdef \tube \ssm \{w\}$ and $\tube'$ are exchangeable, and $\tilde \tube' \eqdef (\tube \cup \tube') \ssm \{w\}$ and $\tube$ are exchangeable as well.
Moreover, we have
\begin{align*}
\b{n}(\tilde \tube, \tube') + \b{n}( \tube, \tilde \tube')
& = \Big( \b{f}_{\tilde \tube} + \b{f}_{\tube'} - \b{f}_{\tilde \tube\cup \tube'} - \sum_{\tube[s] \in \connectedComponents(\tilde \tube \cap \tube')} \b{f}_{\tube[s]} \Big)
+ \Big( \b{f}_{\tube} + \b{f}_{\tilde \tube'} - \b{f}_{\tube\cup \tilde \tube'} - \sum_{\tube[s] \in \connectedComponents(\tube \cap \tilde \tube')} \b{f}_{\tube[s]} \Big) \\
& = \b{f}_{\tube} + \b{f}_{\tube'} - \b{f}_{\tube\cup \tube'} - \sum_{\tube[s] \in \connectedComponents(\tube \cap \tube')} \b{f}_{\tube[s]} =\b{n}(\tube, \tube'),
\end{align*}
as $\tilde \tube \cup \tube' = \tilde \tube'$, $\tilde \tube \cap \tube' = \tube \cap \tube'$, $\tube \cup \tilde \tube' = \tube \cup \tube'$ and $\connectedComponents(\tube \cap \tilde \tube') = \connectedComponents(\tilde \tube) = \tilde \tube$.
Therefore $\b{n}(\tube, \tube')$ defines a redundant inequality and $\{\tube, \tube'\}$ is not an extremal exchangeable pair.
The proof is symmetric if~$\tube' \ssm \tube \ne \{v'\}$.

\para{Maximal $\implies$ extremal}
Let $\{\tube, \tube'\}$ be a maximal exchangeable pair.
To prove that $\{\tube, \tube'\}$ is extremal, we will construct a vector $\b{w} \in \R^{\tubes}$ such that $\dotprod{\b{n}(\tube, \tube')}{\b{w}}< 0$, but $\dotprod{\b{n}(\tilde \tube, \tilde \tube')}{\b{w}}>0$ for any other maximal exchangeable pair $\{\tilde \tube, \tilde \tube'\}$. 
This will show that the inequality induced by~$\{\tube, \tube'\}$ is not redundant.

Define $\alpha(\tube,\tube') \eqdef \set{\tube[s] \in \tubes}{\tube[s] \not\subseteq \tube \text{ and } \tube[s] \not\subseteq \tube' \text{ but } \tube[s] \subseteq \tube \cup \tube'}$.
Note that~$\alpha(\tube,\tube')$ is non-empty since it contains~$\tube \cup \tube'$.
Define three vectors~$\b{x}, \b{y}, \b{z} \in \R^{\tubes}$~by
\begin{align*}
\b{x}_{\tube[s]} & \eqdef -|\bigset{\tube[r] \in \tubes \ssm \alpha(\tube, \tube')}{\tube[r] \subseteq \tube[s]}|,
\\
\b{y}_{\tube[s]} & \eqdef -|\bigset{\tube[r] \in \alpha(\tube, \tube')}{\tube[r] \subseteq \tube[s]}|,
\\
\b{z}_{\tube[s]} & \eqdef \begin{cases}
	-1 & \text{if } \tube \subseteq \tube[s] \text{ or } \tube' \subseteq \tube[s], \\
	0 & \text{otherwise},
\end{cases}
\end{align*}
for each tube~$\tube[s] \in \tubes$.

We will prove below that their scalar products with~$\b{n}(\tilde \tube, \tilde \tube')$ for any maximal exchangeable pair $\{\tilde \tube, \tilde \tube'\}$ satisfy the following inequalities
\[
\renewcommand{\arraystretch}{1.3}
\begin{array}{l|ccc}
& \dotprod{\b{n}(\tilde \tube, \tilde \tube')}{\b{x}} & \dotprod{\b{n}(\tilde \tube, \tilde \tube')}{\b{y}} & \dotprod{\b{n}(\tilde \tube, \tilde \tube')}{\b{z}} \\
\hline
\text{if } \{\tube, \tube'\} = \{\tilde \tube, \tilde \tube'\} & = 0 & = |\alpha(\tube, \tube')| & = -1 \\
\text{if } \alpha(\tilde \tube, \tilde \tube') \not\subseteq \alpha(\tube,\tube') & \ge 1 & \ge 0 & \ge -1 \\
\text{otherwise} & = 0 & \ge 1 & \ge 0
\end{array}
\renewcommand{\arraystretch}{1}
\]
It immediately follows from this table that the vector $\b{w} \eqdef \b{x} + \delta \b{y} + \varepsilon \b{z}$ fulfills the desired properties for any~$\delta, \varepsilon$ such that~$0 < \delta \cdot |\alpha(\tube, \tube')| < \varepsilon < 1$.

To prove the inequalities of the table, observe that for any maximal exchangeable pair~$\{\tilde \tube, \tilde \tube'\}$,
\begin{itemize}
\item $\dotprod{\b{n}(\tilde \tube, \tilde \tube')}{\b{x}} = |\alpha(\tilde \tube, \tilde \tube') \ssm \alpha(\tube, \tube')|$,
\item $\dotprod{\b{n}(\tilde \tube, \tilde \tube')}{\b{y}} = |\alpha(\tilde \tube, \tilde \tube') \cap \alpha(\tube, \tube')|$,
\item $\dotprod{\b{n}(\tilde \tube, \tilde \tube')}{\b{z}} \ge -1$ since~$\b{z}_{\tilde \tube} = -1$ or~$\b{z}_{\tilde \tube'} = -1$ implies~$\b{z}_{\tilde \tube \cup \tilde \tube'} = -1$,
\item $\dotprod{\b{n}(\tilde \tube, \tilde \tube')}{\b{z}} \ge 0$ when $\{\tube, \tube'\} \ne \{\tilde \tube, \tilde \tube'\}$ but $\alpha(\tilde \tube, \tilde \tube') \subseteq \alpha(\tube,\tube')$. Indeed $\alpha(\tilde \tube, \tilde \tube') \subseteq \alpha(\tube,\tube')$ implies~$\tilde \tube \cup \tilde \tube' \subseteq \tube \cup \tube'$. If~$\tube \subseteq \tilde \tube$, then~$\tube \subseteq \tilde \tube \subsetneq \tilde \tube \cup \tilde \tube' \subseteq \tube \cup \tube'$, which implies that~${\tube = \tilde \tube}$ by maximality of~$\tube$ in~$\tube \cup \tube'$. Similarly, $\tube' \subseteq \tilde \tube$ implies $\tube' = \tilde \tube$. Hence, if~$\b{z}_{\tilde \tube} = -1$, then by definition~$\tube \subseteq \tilde \tube$ or~$\tube' \subseteq \tilde \tube$, which implies that~$\tilde \tube \in \{\tube, \tube'\}$. Similarly~$\b{z}_{\tilde \tube'} = -1$ implies~${\tilde \tube' \in \{\tube, \tube'\}}$. Hence, $\b{z}_{\tilde \tube} = -1 = \b{z}_{\tilde \tube'}$ implies~$\tilde \tube = \tilde \tube'$ (impossible since~$\tilde \tube$ and~$\tilde \tube'$ are exchangeable) or~$\{\tube, \tube'\} = \{\tilde \tube, \tilde \tube'\}$ (contradicting our assumption). Therefore, at most one of~$\b{z}_{\tilde \tube}$ and~$\b{z}_{\tilde \tube'}$ equals to~$-1$, and if exactly one does, then~$\b{z}_{\tilde \tube \cup \tilde \tube'} = -1$. We conclude that~$\dotprod{\b{n}(\tilde \tube, \tilde \tube')}{\b{z}} \ge 0$.
\qedhere
\end{itemize}
\end{proof}

The following statement reformulates \cref{thm:extremalExchangeablePairsGraphicalNestedFan}.

\begin{corollary}
\label{coro:extremalExchangeablePairsGraphicalNestedFan}
The extremal exchangeable pairs for the nested fan of~$\graphG$ are precisely the pairs of tubes~${\tube[s] \ssm \{v'\}}$ and~${\tube[s] \ssm \{v\}}$ for any tube~$\tube[s] \in \tubes$ and distinct non-disconnecting vertices~$v,v'$~of~$\tube[s]$.
\end{corollary}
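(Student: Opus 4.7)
The plan is to derive the corollary directly from Theorem~\ref{thm:extremalExchangeablePairsGraphicalNestedFan} by exhibiting a bijection between maximal exchangeable pairs and pairs of the form $\{\tube[s]\ssm\{v\}, \tube[s]\ssm\{v'\}\}$ with $v,v'$ distinct non-disconnecting vertices of a tube $\tube[s]$. Since Theorem~\ref{thm:extremalExchangeablePairsGraphicalNestedFan} already identifies the extremal pairs with the maximal ones, the only task is to reformulate the maximality condition in the stated form.

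First, starting from a maximal exchangeable pair $\{\tube,\tube'\}$, I would set $\tube[s] \eqdef \tube \cup \tube'$ and observe that $\tube[s]$ is a tube: indeed $\tube$ is connected, and $\tube' \ssm \tube = \{v'\}$ where $v'$ is a neighbor of $\tube$ by the maximality hypothesis, so $\tube[s] = \tube \cup \{v'\}$ is connected. Symmetrically $\tube[s] = \tube' \cup \{v\}$. The relations $\tube \ssm \{v\} = \tube' \ssm \{v'\}$ together with $v \in \tube$, $v' \in \tube'$, $v \notin \tube'$ and $v' \notin \tube$ force $\tube = \tube[s] \ssm \{v'\}$ and $\tube' = \tube[s] \ssm \{v\}$ with $v \ne v'$. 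The fact that $\tube$ and $\tube'$ are themselves tubes means precisely that removing $v'$ (respectively $v$) from $\tube[s]$ leaves a connected subgraph, so $v$ and $v'$ are non-disconnecting vertices of $\tube[s]$.

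Conversely, starting from a tube $\tube[s]$ and distinct non-disconnecting vertices $v, v'$ of $\tube[s]$, set $\tube \eqdef \tube[s] \ssm \{v'\}$ and $\tube' \eqdef \tube[s] \ssm \{v\}$. Both are tubes by the non-disconnecting property, and they satisfy $\tube \ssm \{v\} = \tube[s]\ssm\{v,v'\} = \tube' \ssm \{v'\}$. Since $\tube[s]$ is connected and contains at least two vertices, $v$ has a neighbor in $\tube[s] \ssm \{v\} = \tube'$ and similarly $v'$ has a neighbor in $\tube$, so $v$ is a neighbor of $\tube'$ and $v'$ is a neighbor of $\tube$. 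By Proposition~\ref{prop:exchangeablePairsGraphicalNestedFan}\,(i) the pair $\{\tube,\tube'\}$ is exchangeable, and by construction it is maximal.

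Combining these two directions with Theorem~\ref{thm:extremalExchangeablePairsGraphicalNestedFan} yields the claimed description of the extremal exchangeable pairs. No step poses a genuine obstacle: the only subtle point is verifying connectedness at each stage, which is immediate from the hypotheses on neighbors and non-disconnecting vertices.
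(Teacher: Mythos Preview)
Your proposal is correct and follows exactly the approach the paper intends: the paper gives no explicit proof, merely stating that the corollary ``reformulates'' \cref{thm:extremalExchangeablePairsGraphicalNestedFan}, and you have simply spelled out this reformulation by exhibiting the straightforward bijection between maximal exchangeable pairs and pairs $\{\tube[s]\ssm\{v\},\tube[s]\ssm\{v'\}\}$. The one place where you could be slightly more explicit is in the forward direction, where you assert $v\in\tube$ and $v'\in\tube'$: these hold because otherwise $\tube\ssm\{v\}=\tube$ would force $\tube\subseteq\tube'$, contradicting exchangeability via \cref{prop:exchangeablePairsGraphicalNestedFan}\,(i); but this is a routine check.
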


We derive from \cref{thm:extremalExchangeablePairsGraphicalNestedFan,coro:extremalExchangeablePairsGraphicalNestedFan} the irredundant facet description of the type cone~$\typeCone(\nestedFan[\graphG])$.



\begin{corollary}
\label{coro:facetDescriptionTypeConeGraphicalNestedFan}
For any graph~$\graphG$, the type cone of the nested fan~$\nestedFan[\graphG]$ is given by the following irredundant facet description
\[
\typeCone(\nestedFan[\graphG]) = \set{\b{h} \in \R^{\tubes}}{\begin{array}{l} \b{h}_{K} = 0 \text{ for any connected component } K \in \connectedComponents(\graphG) \text{, and}\\ 
\b{h}_{\tube[s] \ssm \{v'\}} + \b{h}_{\tube[s] \ssm \{v\}} > \b{h}_{\tube[s]} + \b{h}_{\tube[s] \ssm \{v, v'\}} \text{ for any tube } \tube[s] \in \tubes \\ \text{and distinct non-disconnecting vertices } v, v' \in \tube[s] \end{array}}.
\]\end{corollary}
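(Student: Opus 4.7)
The plan is to derive this corollary as an almost immediate packaging of the results established just above it in the section. The starting point will be \cref{coro:typeConeGraphicalNestedFan}, which already parametrizes $\typeCone(\nestedFan[\graphG])$ as the set of height vectors $\b{h} \in \R^{\tubes}$ satisfying $\b{h}_K = 0$ for each connected component $K \in \connectedComponents(\graphG)$ together with the wall-crossing inequality $\b{h}_\tube + \b{h}_{\tube'} > \b{h}_{\tube \cup \tube'} + \sum_{\tube[s] \in \connectedComponents(\tube \cap \tube')} \b{h}_{\tube[s]}$ for every exchangeable pair $\{\tube, \tube'\}$. So the only thing left is to cut this description down to an irredundant one.

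Next I will invoke \cref{thm:extremalExchangeablePairsGraphicalNestedFan} (equivalently its reformulation \cref{coro:extremalExchangeablePairsGraphicalNestedFan}) to discard all non-maximal exchangeable pairs, since the theorem says precisely that the extremal (\ie facet-defining) pairs among all exchangeable pairs are exactly the maximal ones. After this trimming, the surviving inequalities are indexed by pairs of the form $\tube = \tube[s] \ssm \{v'\}$, $\tube' = \tube[s] \ssm \{v\}$ where $\tube[s] \in \tubes$ and $v, v'$ are two distinct non-disconnecting vertices of $\tube[s]$ (the non-disconnecting condition ensures that $\tube$ and $\tube'$ are themselves tubes).

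Finally I will rewrite the wall-crossing inequality for such a maximal pair in the form announced in the statement. Since $\tube \cup \tube' = \tube[s]$ and $\tube \cap \tube' = \tube[s] \ssm \{v,v'\}$, the inequality of \cref{coro:typeConeGraphicalNestedFan} specializes to
\[
\b{h}_{\tube[s] \ssm \{v'\}} + \b{h}_{\tube[s] \ssm \{v\}} > \b{h}_{\tube[s]} + \sum_{\tube[r] \in \connectedComponents(\tube[s] \ssm \{v,v'\})} \b{h}_{\tube[r]},
\]
which is exactly the inequality in the statement (using the shorthand $\b{h}_{\tube[s] \ssm \{v,v'\}}$ for the sum over connected components when $\tube[s] \ssm \{v,v'\}$ fails to be connected, with the convention that this term is $0$ when $\tube[s] \ssm \{v,v'\} = \varnothing$). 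There is no obstacle here: the whole weight of the argument sits in \cref{thm:extremalExchangeablePairsGraphicalNestedFan}, and what remains is just translating between the two parametrizations of the same set of inequalities.
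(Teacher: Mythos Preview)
Your proposal is correct and matches the paper's approach exactly: the paper presents this corollary without proof, deriving it directly from \cref{thm:extremalExchangeablePairsGraphicalNestedFan} and \cref{coro:extremalExchangeablePairsGraphicalNestedFan}, and your write-up simply spells out that derivation. Your observation that $\b{h}_{\tube[s] \ssm \{v,v'\}}$ must be read as the sum over connected components (since $\tube[s] \ssm \{v,v'\}$ need not be connected even when $v,v'$ are individually non-disconnecting) is a useful clarification that the paper leaves implicit.
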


\begin{remark}
For instance,
\begin{enumerate}[(i)]
\item for the complete graph~$\graphG[K]_n$, all the inequalities~$\b{h}_{U \ssm \{v\}} + \b{h}_{U \ssm \{v'\}} > \b{h}_{U} + \b{h}_{U \ssm \{v,v'\}}$ for~${v,v' \in \ground}$ and~$\{v,v'\} \subseteq U \subseteq \ground$ are facet defining inequalities of~$\typeCone(\nestedFan[{\graphG[K]_n}])$.
\item for the path~$\graphG[P]_n$, only the inequalities $\b{h}_{[i,j-1]} + \b{h}_{[i+1,j]} > \b{h}_{[i,j]} + \b{h}_{[i+1,j-1]}$ for~$1 \le i < j \le n$ are facet defining inequalities of~$\typeCone(\nestedFan[{\graphG[P]_n}])$ (where~$\b{h}_\varnothing = 0$ by convention).
\end{enumerate}
\end{remark}

We derive from \cref{coro:extremalExchangeablePairsGraphicalNestedFan} the number of facets of the type cone~$\typeCone(\nestedFan[\graphG])$.
For a tube~$\tube$ of~$\graphG$, we denote by~$\nonDisconnecting(\tube)$ the number of non-disconnecting vertices of~$\tube$.
In other words, $\nonDisconnecting(\tube)$ is the number of tubes covered by~$\tube$ in the inclusion poset of all tubes of~$\graphG$.

\begin{corollary}
\label{coro:numberFacetsTypeConeGraphicalNestedFan}
The type cone~$\typeCone(\nestedFan[\graphG])$ has~$\sum\limits_{\tube[s] \in \tubes} \binom{\nonDisconnecting(\tube[s])}{2}$ facets.
\end{corollary}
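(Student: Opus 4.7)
The plan is to derive this count directly from the characterization of extremal exchangeable pairs in \cref{coro:extremalExchangeablePairsGraphicalNestedFan}, together with the fact (coming from the facet description in \cref{coro:facetDescriptionTypeConeGraphicalNestedFan}) that each extremal exchangeable pair contributes exactly one facet of~$\typeCone(\nestedFan[\graphG])$. Thus it suffices to count the extremal exchangeable pairs.

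By \cref{coro:extremalExchangeablePairsGraphicalNestedFan}, every extremal exchangeable pair arises in the form $\bigl\{\tube[s] \ssm \{v\}, \tube[s] \ssm \{v'\}\bigr\}$ for some tube $\tube[s] \in \tubes$ and some pair of distinct non-disconnecting vertices $v, v' \in \tube[s]$. This gives a surjection from the set of triples~$(\tube[s], \{v,v'\})$, where~$\tube[s]$ is a tube of~$\graphG$ and~$\{v,v'\}$ is an unordered pair of distinct non-disconnecting vertices of~$\tube[s]$, onto the set of extremal exchangeable pairs.

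The key step is to observe that this map is also injective. Indeed, given the unordered pair $\bigl\{\tube[s] \ssm \{v\}, \tube[s] \ssm \{v'\}\bigr\}$, one recovers~$\tube[s]$ as the union $(\tube[s] \ssm \{v\}) \cup (\tube[s] \ssm \{v'\})$ (since $v \ne v'$, the two missing vertices are each supplied by the other tube), and then~$\{v, v'\}$ as the symmetric difference $(\tube[s] \ssm \{v\}) \symdif (\tube[s] \ssm \{v'\})$. Hence the parametrization is a bijection.

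Finally, for each fixed tube $\tube[s] \in \tubes$, the number of unordered pairs of distinct non-disconnecting vertices of~$\tube[s]$ is precisely $\binom{\nonDisconnecting(\tube[s])}{2}$ by definition of~$\nonDisconnecting(\tube[s])$. Summing over all tubes~$\tube[s] \in \tubes$ yields the announced count~$\sum_{\tube[s] \in \tubes} \binom{\nonDisconnecting(\tube[s])}{2}$. No genuine obstacle is expected: the whole argument is a clean bookkeeping consequence of \cref{coro:extremalExchangeablePairsGraphicalNestedFan}.
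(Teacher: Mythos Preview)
Your proof is correct and follows exactly the route the paper intends: the corollary is stated in the paper without proof as an immediate consequence of \cref{coro:extremalExchangeablePairsGraphicalNestedFan} (and \cref{coro:facetDescriptionTypeConeGraphicalNestedFan}), and you have simply spelled out the only nontrivial bookkeeping step, namely the injectivity of the parametrization $(\tube[s],\{v,v'\}) \mapsto \{\tube[s]\ssm\{v\},\tube[s]\ssm\{v'\}\}$.
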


The formula of \cref{coro:numberFacetsTypeConeGraphicalNestedFan} can be made more explicit for specific families of graph associahedra discussed in the introduction and illustrated in \cref{fig:specialGraphAssociahedra}.

\begin{proposition}
The number of facets of the type cone~$\typeCone(\nestedFan[\graphG])$ is:
\begin{itemize}
\item $2^{n-2}\binom{n}{2}$ for the permutahedron (complete graph associahedron),
\item $\binom{n}{2}$ for the associahedron (path associahedron),
\item $3\binom{n}{2} - n$ for the cyclohedron (cycle associahedron),
\item $n-1+2^{n-3}\binom{n-1}{2}$ for the stellohedron (star associahedron).
\end{itemize}
\end{proposition}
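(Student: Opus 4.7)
The plan is to apply \cref{coro:numberFacetsTypeConeGraphicalNestedFan} to each of the four graphs and evaluate $\sum_{\tube\in\tubes}\binom{\nonDisconnecting(\tube)}{2}$. In every case this reduces to (i) enumerating the tubes, \ie the connected induced subgraphs, and (ii) identifying, for each tube $\tube$, which of its vertices are non-disconnecting in the induced subgraph. The counting then leans mostly on the identity
\[
\sum_{k=0}^{m}\binom{m}{k}\binom{k}{2}=\binom{m}{2}\,2^{m-2},
\]
obtained by double counting ordered pairs of marked elements in a subset of $[m]$.

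For the complete graph $K_n$, every induced subgraph is itself complete, so $\nonDisconnecting(U)=|U|$ for every non-empty $U\subseteq[n]$; summing gives $\sum_{k=1}^n\binom{n}{k}\binom{k}{2}=\binom{n}{2}2^{n-2}$. For the path $P_n$, the tubes are the intervals $[i,j]$; a singleton contributes $0$, and every interval of length at least two has exactly two non-disconnecting vertices (its endpoints), contributing $\binom{2}{2}=1$, for a total of $\binom{n}{2}$. For the cycle $C_n$, the tubes are the whole cycle together with all proper arcs; the $n$ singleton arcs contribute $0$, each of the $n(n-2)$ proper arcs of length between $2$ and $n-1$ has exactly two non-disconnecting vertices and thus contributes $1$, while the whole cycle has all $n$ vertices non-disconnecting and contributes $\binom{n}{2}$; summing yields $n(n-2)+\binom{n}{2}=3\binom{n}{2}-n$.

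The star $S_n$ requires the most care. Denoting by $c$ the center and by $L$ a subset of the $n-1$ leaves, the tubes are the leaf singletons and $\{c\}$ (each contributing $0$), the $n-1$ edges $\{c,\ell\}$ (every vertex non-disconnecting, contributing $1$ each), and the tubes $\{c\}\cup L$ with $|L|=k\geq 2$. For this last family, removing the center shatters the tube into $k$ isolated leaves while removing a leaf leaves a smaller star, so the non-disconnecting vertices are precisely the $k$ leaves, with contribution $\binom{k}{2}$. Summing over $k\geq 2$ and invoking the identity above gives $(n-1)+\binom{n-1}{2}2^{n-3}$.

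I expect the only real obstacle to be the case splits for the last two graphs: for the star, one must separate tubes of size at most two (where the center is non-disconnecting because removing it leaves a single leaf) from those of size at least three (where it becomes disconnecting), while not forgetting the singleton tubes that contribute zero; for the cycle, one must similarly distinguish the full cycle, which has no endpoints, from proper arcs. Once these distinctions are made, all four formulas follow from elementary binomial manipulations.
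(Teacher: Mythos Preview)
Your proof is correct and follows essentially the same approach as the paper: both count the triples consisting of a tube together with an unordered pair of non-disconnecting vertices in it. The only cosmetic difference is the order of summation—you fix the tube first and invoke the identity $\sum_k \binom{m}{k}\binom{k}{2}=\binom{m}{2}2^{m-2}$, whereas the paper fixes the pair $\{v,v'\}$ first and counts the tubes containing both as non-disconnecting vertices, which sidesteps the identity entirely (e.g.\ for the cycle one directly sees three completions of $\{v,v'\}$ in general and only two when $v,v'$ are adjacent, yielding $3\binom{n}{2}-n$ without the intermediate $n(n-2)+\binom{n}{2}$).
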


\begin{proof}
For the permutahedron, choose any two vertices~$v,v'$, and complete them into a tube by selecting any subset of the~$n-2$ remaining vertices.
For the associahedron, choose any two vertices~$v,v'$, and complete them into a tube by taking the path between them.
For the cyclohedron, choose the two vertices~$v,v'$, and complete them into a tube by taking either all the cycle, or one of the two paths between~$v$ and~$v'$ (this gives three options in general, but only two when~$v,v'$ are neighbors).
For the stellohedron, choose either~$v$ as the center of the star and~$v'$ as one of the $n-1$ leaves, or $v$~and~$v'$ as leaves of the star and complete them into a tube by taking the center and any subset of the $n-3$ remaining leaves.
\end{proof}


\subsection{Simplicial type cone}
\label{subsec:simplicialTypeConeGraphicalNestedFans}

To conclude on graphical nested fans, we characterize the graphs~$\graphG$ whose nested fan~$\nestedFan[\graphG]$ has a simplicial type cone.

\begin{proposition}
\label{prop:simplicialTypeConeGraphicalNestedFan}
The type cone~$\typeCone(\nestedFan[\graphG])$ is simplicial if and only if~$\graphG$ is a disjoint union~of~paths.
\end{proposition}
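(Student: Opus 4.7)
The plan is to reduce the statement to a counting identity via \cref{coro:numberFacetsTypeConeGraphicalNestedFan}. Since $\HH$ has dimension $|\ground| - |\connectedComponents(\graphG)|$ and the $\b{g}$-vectors of the connected components vanish (so the fan has $|\tubes| - |\connectedComponents(\graphG)|$ rays), the type cone is simplicial exactly when
\[
\sum_{\tube \in \tubes} \binom{\nonDisconnecting(\tube)}{2} \;=\; |\tubes| - |\ground|.
\]

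The first step is to turn this global identity into a pointwise condition on tubes. Each of the $|\ground|$ singleton tubes contributes $0$, so the right-hand side counts exactly the non-singleton tubes. Since any connected graph on at least two vertices has at least two non-disconnecting vertices (a classical fact, e.g.~the two endpoints of a longest path), every non-singleton tube contributes at least $1$, with equality iff it has exactly two non-disconnecting vertices. Hence simpliciality is equivalent to: every non-singleton tube of $\graphG$ has exactly two non-disconnecting vertices.

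The remaining step is to identify this condition with being a disjoint union of paths. The $(\Leftarrow)$ direction is transparent: any tube of such a $\graphG$ is a subpath of some component, and a path on at least two vertices has exactly its two endpoints as non-disconnecting vertices. For the $(\Rightarrow)$ direction I would argue contrapositively by exhibiting an obstruction tube. Assume $\graphG$ is not a disjoint union of paths. If $\graphG$ contains any cycle, then a shortest cycle $C$ is induced (any chord would give a shorter cycle), yielding a tube of size $\ge 3$ where every vertex is non-disconnecting (removing any single vertex leaves a path). Otherwise $\graphG$ is a forest with some vertex $v$ of degree $\ge 3$; taking any three neighbors $a,b,c$ of $v$, the set $\{v,a,b,c\}$ induces a claw $K_{1,3}$ (no edges among $\{a,b,c\}$ since a forest is triangle-free), furnishing a tube with three non-disconnecting vertices (the three leaves).

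The only mild subtlety is the case split in the contrapositive: one must confirm that each candidate obstruction is genuinely an induced subgraph of $\graphG$ and actually supplies at least three non-disconnecting vertices. Both verifications are elementary, so no serious obstacle is anticipated beyond ensuring the dichotomy (cycle vs.~forest with high-degree vertex) is exhaustive.
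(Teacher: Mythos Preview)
Your proposal is correct and follows essentially the same approach as the paper: both reduce simpliciality to the counting identity $\sum_{\tube} \binom{\nonDisconnecting(\tube)}{2} = |\tubes| - |\ground|$, observe that each non-singleton tube contributes at least~$1$, and then characterize equality as ``every tube is a path''. The only minor difference is in the last step: the paper asserts directly that any non-path tube has at least three non-disconnecting vertices (citing spanning-tree leaves or the cycle case), whereas you argue contrapositively by exhibiting a single obstruction tube (a shortest cycle or an induced claw); your case analysis is arguably a bit more explicit than the paper's parenthetical justification.
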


\begin{proof}
Observe first that the graphical nested fan~$\nestedFan[\graphG]$ has~$N = |\tubes| - |\connectedComponents(\graphG)|$ rays and dimension~$n = |\ground|-|\connectedComponents(\graphG)|$.
Moreover, any tube~$\tube$ with~$|\tube| \ge 2$ has two non-disconnecting vertices when it is a path, and at least three non-disconnecting vertices otherwise (the leaves of an arbitrary spanning tree of~$\tube$, or any vertex if it is a cycle).
Therefore, each tube of~$\tubes$ which is not a singleton contributes to at least one extremal exchangeable pair.
We conclude that the number of extremal exchangeable pairs is at least
\[
|\tubes| - |\ground| = (|\tubes| - |\connectedComponents(\graphG)|) - (|\ground|-|\connectedComponents(\graphG)|) = N - n,
\]
with equality if and only if all tubes of~$\graphG$ are paths, \ie if and only if~$\graphG$ is a collection of paths.
Hence, $\typeCone(\nestedFan[\graphG])$ is simplicial if and only if~$\graphG$ is a disjoint union of paths.
\end{proof}

The motivation to study the simpliciality of the type cone~$\typeCone(\nestedFan[\graphG])$ stems from the \defn{kinematic associahedra} of~\cite[Sect.~3.2]{ArkaniHamedBaiHeYan}.
These polytopes are alternative realizations of the associahedron obtained as sections of the kinematic space (the positive orthant in~$\smash{\R^{\binom{n}{2}}}$) by a well-chosen affine subspace parametrized by positive vectors.
While these polytopes are just affinely equivalent to the realizations in~$\R^\ground$, they have the advantage of being more natural from the scattering amplitudes perspective~\cite{ArkaniHamedBaiHeYan}.
As observed in~\cite{PadrolPaluPilaudPlamondon}, such realizations can be directly obtained from the facet description of the type cone, when the latter is simplicial.
Hence, combining \cref{prop:simplicialTypeCone,prop:simplicialTypeConeGraphicalNestedFan,coro:facetDescriptionTypeConeGraphicalNestedFan} produces kinematic realizations of all graph associahedra of disjoint union of paths (\ie all Cartesian products of associahedra).
Our next statement only recalls the construction of the kinematic associahedron as it serves as a prototype for \cref{prop:kinematicNestohedraInterval} that will describe new families of kinematic nestohedra.

\begin{proposition}
\label{prop:kinematicAssociahedra}
For any~$\b{p} \in \R_{>0}^{\binom{[n]}{2}}$, the polytope~$R_\b{p}(n)$ defined as the intersection of the positive orthant~$ \set{\b{z} \in \R^{\set{[i,j]}{1 \le i \le j \le n}}}{\b{z} \ge 0}$ with the hyperplanes
\begin{itemize}
\item $\b{z}_{[1,n]} = 0$ and $\b{z}_{[i,i]} = 0$ for~$i \in [n]$,
\item $\b{z}_{[i,j-1]} + \b{z}_{[i+1,j]} - \b{z}_{[i,j]} + \b{z}_{[i+1,j-1]} = \b{p}_{[i,j]}$ for all~$1 \le i < j \le n$,
\end{itemize}
is an associahedron whose normal fan is~$\nestedFan[{\graphG[P]_n}]$.
Moreover, the polytopes~$R_\b{p}(n)$ for~${\b{p} \in \R_{>0}^{\binom{[n]}{2}}}$ describe all polytopal realizations of~$\nestedFan[{\graphG[P]_n}]$ (up to translations).
\end{proposition}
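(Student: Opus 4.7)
The plan is a direct specialization of Proposition~\ref{prop:simplicialTypeCone} to the nested fan $\nestedFan[{\graphG[P]_n}]$ of the path. First, Proposition~\ref{prop:simplicialTypeConeGraphicalNestedFan} guarantees that the type cone $\typeCone(\nestedFan[{\graphG[P]_n}])$ is simplicial, since the path is (trivially) a disjoint union of paths. Second, Corollary~\ref{coro:facetDescriptionTypeConeGraphicalNestedFan} then provides the irredundant facet description: the $\binom{n}{2}$ facets are defined by the inequalities
\[
\b{h}_{[i,j-1]} + \b{h}_{[i+1,j]} > \b{h}_{[i,j]} + \b{h}_{[i+1,j-1]} \quad \text{for } 1 \le i < j \le n,
\]
with the convention $\b{h}_\varnothing = 0$, and the inner normal vectors of these facets supply the rows of the matrix $\b{K}$ appearing in Proposition~\ref{prop:simplicialTypeCone}.

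With these two ingredients, Proposition~\ref{prop:simplicialTypeCone} immediately yields that, for every $\b{p} \in \R_{>0}^{\binom{n}{2}}$, the polytope $\{\b{z} \ge 0 : \b{K}\b{z} = \b{p}\}$ realizes $\nestedFan[{\graphG[P]_n}]$, and that every polytopal realization of this fan arises this way (up to translation). The displayed family of linear equations in the statement is simply $\b{K}\b{z} = \b{p}$ unpacked coordinate-wise, after augmenting the ambient space with the dummy coordinate $\b{z}_{[1,n]}$ (set to zero because $[1,n]$ is a connected component of $\graphG[P]_n$, whose $\b{g}$-vector vanishes) and fixing a canonical representative for the translation lineality via $\b{z}_{[i,i]} = 0$ for $i \in [n]$.

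The main obstacle, such as it is, lies entirely in bookkeeping: one must carefully match signs when reading off the rows of $\b{K}$ from the wall-crossing inequalities of Corollary~\ref{coro:facetDescriptionTypeConeGraphicalNestedFan}, handle the boundary convention $\b{h}_\varnothing = 0$ when $j = i+1$, and verify that the resulting system together with the positivity constraints $\b{z} \ge 0$ indeed cuts out a polytope of the expected dimension $n - 1$. No conceptual difficulty arises, since the statement is a concrete instance of the general machinery already developed; its interest lies in recognising that the kinematic associahedra of~\cite{ArkaniHamedBaiHeYan} fit naturally into the type cone framework and thereby serve as a prototype for the kinematic nestohedra to follow.
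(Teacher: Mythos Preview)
Your proposal is correct and matches the paper's own approach exactly: the paper does not spell out a separate proof but presents the statement as the immediate combination of \cref{prop:simplicialTypeCone}, \cref{prop:simplicialTypeConeGraphicalNestedFan}, and \cref{coro:facetDescriptionTypeConeGraphicalNestedFan}, which is precisely the reduction you describe. Your remark that the residual work is pure bookkeeping (signs, the boundary convention for~$j=i+1$, and the role of the constraints~$\b{z}_{[1,n]}=0$ and~$\b{z}_{[i,i]}=0$) is also in the spirit of the paper, which treats the proposition as a prototype for \cref{prop:kinematicNestohedraInterval} rather than as a result requiring independent argument.
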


%
%


\section{Type cones of arbitrary nested fans}
\label{sec:typeConeNestedFan}

We now extend our results from graph associahedra to nestohedra.
In the general situation, the set of tubes is replaced by a building set~$\building$, and the tubings are replaced by $\building$-nested sets (this generalization can equivalently be interpreted as replacing the graph by an arbitrary hypergraph). 
As in the graphical case, the nested sets define a nested complex and a nested fan, which is the normal fan of the nestohedron.
In this section, we describe the type cones of arbitrary nested fans.
We follow the same scheme as in \cref{sec:typeConeGraphicalNestedFan}, even if the general situation is significantly more intricate (\cref{rem:differencesGraphicalExchangeables,rem:differencesGraphicalExchangeRelation} highlight some of the complications of the general case).


\pagebreak
\subsection{Nested complex, nested fan, and nestohedron}
\label{subsec:nestedComplexNestedFan}

We first recall the definitions of arbitrary building sets, nested complexes, nested fans and nestohedra, following~\cite{Postnikov, FeichtnerSturmfels, Zelevinsky, Pilaud-removahedra}.

\subsubsection{Building sets}
A \defn{building set}~$\building$ on a ground set~$\ground$ is a set of non-empty subsets of~$\ground$ such that
\begin{itemize}
\item if~$B,B' \in \building$ and~$B \cap B' \ne \varnothing$, then~$B \cup B' \in \building$, and
\item $\building$ contains all singletons~$\{v\}$ for~$v \in \ground$.
\end{itemize} 
We denote by~$\connectedComponents(\building)$ the set of \defn{connected components} of~$\building$, defined as the (inclusion) maximal elements of~$\building$.
We denote by~$\elementary(\building)$ the set of \defn{elementary blocks} of~$\building$, defined as the blocks~$B \in \building$ such that~$|B| > 1$, and $B = B' \cup B''$ implies~$B' \cap B'' = \varnothing$ for any~$B', B'' \in \building \ssm \{B\}$.
For instance, consider the building set~$\building_\circ$ on~$[9]$ defined by
\[
\building_\circ \eqdef \{1, 2, 3, 4, 5, 6, 7, 8, 9, 14, 25, 123, 456, 789, 1234, 1235, 1456, 2456, 12345, 12456, 123456\}
\]
(since all labels have a single digit, we can abuse notation and write $123$ for $\{1,2,3\}$).
Its connected components are $\connectedComponents(\building_\circ) = \{123456, 789\}$, and its elementary blocks are $\elementary(\building_\circ) = \{14, 25, 123, 456, 789\}$, which are represented in \cref{fig:exmNested}\,(left).

\begin{remark}
\label{rem:elementary}
If~$B \in \building$ is elementary, then the maximal blocks of~$\building$ strictly contained in~$B$ are disjoint.
Conversely, if there exist two disjoint maximal blocks~$M, N \in \building$ strictly contained in~$B \in \building$, then~$B$ is elementary.
Otherwise, there would be~$B', B'' \in \building \ssm \{B\}$ such that $B = B' \cup B''$ and $B' \cap B'' \ne \varnothing$.
By maximality, $M$ and~$N$ are not strict subsets of~$B'$ and~$B''$, hence $M$ and~$N$ intersect both~$B'$ and~$B''$.
Since~$M \cap B' \ne \varnothing$, we have~$M \cup B' \in \building$.
As $M \subseteq M \cup B' \subseteq B$, we obtain again by maximality of~$M$ that~$M = M \cup B'$ or $M \cup B' = B$.
In the former case, we have~$\varnothing \ne B' \cap N \subseteq M \cap N$ contradicting our assumption on~$M$ and~$N$.
In the latter case, we have~$N \subseteq B' \ssm M$ contradicting the maximality of~$N$.
\end{remark}

\begin{remark}
\label{rem:graphicalBuildingSet}
For a graph~$\graphG$ with vertex set~$\ground$, the set~$\tubes$ of all tubes of~$\graphG$ is a \defn{graphical building set}.
The blocks of~$\connectedComponents(\tubes)$ are the vertex sets of the connected components~$\connectedComponents(\graphG)$ of~$\graphG$, and the blocks of~$\elementary(\tubes)$ are the edges of~$\graphG$.
\end{remark}

\begin{remark}
\label{rem:hypergraphs}
Note that not all building sets are graphical building sets.
It was in fact proved in~\cite[Prop.~7.3]{Zelevinsky} that a building set is graphical if and only if for any~$B \in \building$ and~$\c{C} \subset \building$, if~$B \cup \bigcup \c{C} \in \building$, then there is~$C \in \c{C}$ such that~$B \cup C \in \building$.
However, arbitrary building sets can be interpreted using hypergraphs~\cite{Berge} instead of graphs.
More precisely, a hypergraph~$\hypergraph$ on~$\ground$ defines a building set~$\building\hypergraph$ on~$\ground$ given by all non-empty subsets of~$\ground$ which induce connected subhypergraphs of~$\hypergraph$ (a path in~$\hypergraph$ is a sequence of vertices where any two consecutive ones belong to a common hyperedge of~$\hypergraph$).
Conversely, a building set~$\building$ on~$\ground$ is the building set of various hypergraphs on~$\ground$, all containing the hypergraph with hyperedge set~$\elementary(\building)$.
See \cite{DosenPetric} for details.
\end{remark}

\subsubsection{Nested complex}
Given a building set~$\building$, a \defn{$\building$-nested set}~$\nested$ is a subset of~$\building$ such that
\begin{itemize}
\item for any~$B,B' \in \nested$, either~$B \subseteq B'$ or~$B' \subseteq B$ or~$B \cap B' = \varnothing$,
\item for any~$k \ge 2$ pairwise disjoint~$B_1,\dots,B_k \in \nested$, the union~$B_1 \cup \dots \cup B_k$ is not in~$\building$, and
\item $\nested$ contains~$\connectedComponents(\building)$.
\end{itemize}
These are the original conditions that appeared for instance in~\cite{Postnikov}.
In this paper, we prefer to use the following convenient reformulation, similar to that of~\cite{Zelevinsky}:  $\nested \subseteq \building$ is a $\building$-nested set if and only if~$\connectedComponents(\building) \subseteq \nested$ and the union~$\bigcup \nested[X]$ of any subset~$\nested[X] \subseteq \nested$ does not belong to~$\building \ssm \nested[X]$.
%
It is known that all inclusion maximal nested sets have~$|V|$ blocks.
The \defn{$\building$-nested complex} is the simplicial complex~$\nestedComplex(\building)$ whose faces are $\nested \ssm \connectedComponents(\building)$ for all $\building$-nested sets~$\nested$.
It is a simplicial sphere of dimension~$|\ground| - |\connectedComponents(\building)|$.
Note that it is convenient to include~$\connectedComponents(\building)$ in all $\building$-nested sets as in~\cite{Postnikov} for certain combinatorial manipulations, but to remove $\connectedComponents(\building)$ from all $\building$-nested sets as in~\cite{Zelevinsky} when defining the $\building$-nested complex.
If~$\nested \ssm \{B\} = \nested' \ssm \{B'\}$ for two maximal $\building$-nested sets~$\nested$ and~$\nested'$ and two building blocks~$B$ and~$B'$, we say that~$\nested$ and $\nested'$ are \defn{adjacent} and that~$B$ and~$B'$ are~\defn{exchangeable}.

For instance, \cref{fig:exmNested}\,(middle) represents the two adjacent maximal $\building_\circ$-nested sets
\[
\nested_\circ \eqdef \{3, 4, 5, 7, 8, 14, 789, 12345, 123456\}
\quad\text{and}\quad
\nested'_\circ \eqdef \{3, 4, 5, 7, 8, 25, 789, 12345, 123456\}.
\]

\begin{figure}
	\capstart
	\centerline{\includegraphics[scale=.9]{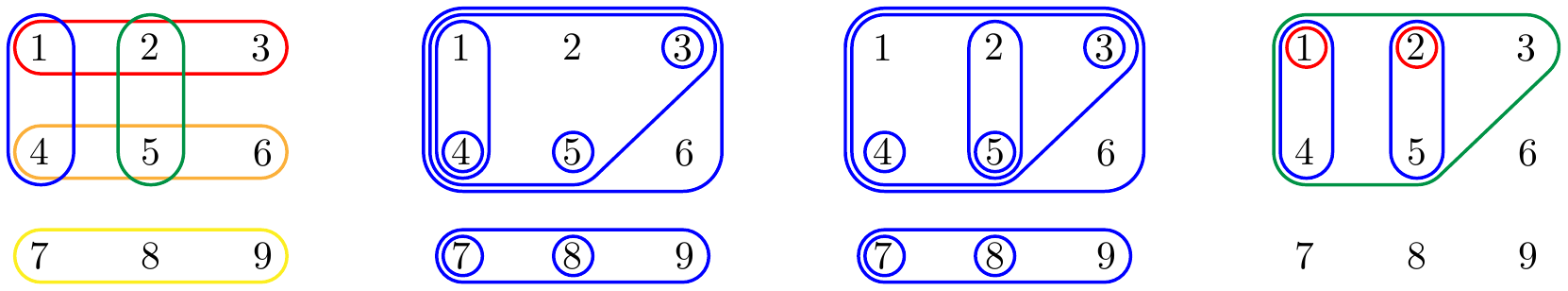}}
	\caption{The elementary blocks of a building set~$\building_\circ$ (left), two adjacent maximal $\building_\circ$-nested sets (middle), and the corresponding frame (right).}
	\label{fig:exmNested}
\end{figure}

\begin{remark}
For a graph~$\graphG$, a set of tubes of~$\building\graphG$ is nested if and only if its tubes are pairwise compatible in the sense of \cref{subsec:graphicalNestedComplexGraphicalNestedFan} (either nested or non-adjacent).
The nested complex~$\nestedComplex(\building\graphG)$ thus coincides with the graphical nested complex~$\nestedComplex(\graphG)$ introduced in \cref{subsec:graphicalNestedComplexGraphicalNestedFan} (which justifies our notation there).
Note that, in contrast to the graphical nested complexes, not all nested complexes are flag (\ie clique complexes of their graphs).
\end{remark}

For a $\building$-nested set~$\nested$ and~$B \in \nested$, we call \defn{root} of~$B$ in~$\nested$ the set~$\rootset{B}{\nested} \eqdef B \ssm \bigcup_{C} C$ where the union runs over~$C \in \nested$ such that~$C \subsetneq B$.
The $\building$-nested set~$\nested$ is maximal if and only if all~$\rootset{B}{\nested}$ are singletons for~$B \in \nested$.
In that case, we abuse notation writing~$\rootset{B}{\nested}$ for the only element of this singleton.
For instance, in the maximal $\building_\circ$-nested sets~$\nested_\circ$ and~$\nested'_\circ$ represented in \cref{fig:exmNested}\,(middle), we have $\rootset{14}{\nested_\circ} = 1 = \rootset{12345}{\nested'_\circ}$ and $\rootset{12345}{\nested_\circ} = 2 = \rootset{25}{\nested'_\circ}$.

\subsubsection{Nested fan}
We still denote by~$(\b{e}_v)_{v \in \ground}$ the canonical basis of~$\R^\ground$.
We consider the subspace~${\HH \eqdef \bigset{\b{x} \in \R^\ground}{\sum_{v \in B} x_v = 0 \text{ for all } B \in \connectedComponents(\building)}}$ and let~$\pi : \R^\ground \to \HH$ denote the orthogonal projection onto~$\HH$.
The \defn{$\b{g}$-vector} of a building block~$B$ of~$\building$ is the projection~$\gvector{B} \eqdef \pi \big( \sum_{v \in B} \b{e}_v \big)$ of the characteristic vector of~$B$.
We set~$\gvectors{\nested} \eqdef \set{\gvector{B}}{B \in \nested}$ for a $\building$-nested set~$\nested$.
Note that by definition, $\gvector{K} = \b{0}$ for all connected components~$K \in \connectedComponents(\building)$.
The vectors~$\gvector{B}$ with~$B \in \building$ support a complete simplicial fan realization of the nested complex.
See~\cref{fig:nestedFans}.

\begin{theorem}[\cite{Postnikov, FeichtnerSturmfels, Zelevinsky}]
\label{thm:nestedFan}
For any building set~$\building$, the set of cones
\[
\nestedFan[\building] \eqdef \set{\R_{\ge 0} \, \gvectors{\nested}}{\nested \text{ nested set of } \building}
\]
is a complete simplicial fan of~$\HH$, called the \defn{nested fan} of~$\building$, which realizes the nested complex~$\nestedComplex(\building)$.
\end{theorem}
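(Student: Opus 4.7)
The plan is to verify the three fan axioms---simpliciality of each cone, completeness, and the intersection-as-common-face property---by comparing $\nestedFan[\building]$ to the braid fan of~$\HH$. I treat maximal cones throughout; other cones follow by taking faces.

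For simpliciality, I exploit the root decomposition. For a maximal $\building$-nested set~$\nested$, the roots $\{\rootset{B}{\nested}\}_{B \in \nested}$ are singletons that partition~$\ground$, and each $B \in \nested$ satisfies $B = \rootset{B}{\nested} \sqcup \bigsqcup_C C$, where the union runs over the maximal $C \in \nested$ with $C \subsetneq B$. Projecting to~$\HH$ gives
\[
\gvector{B} = \pi(\b{e}_{\rootset{B}{\nested}}) + \sum_{C} \gvector{C},
\]
which, when $\nested$ is ordered top-down by reverse inclusion, forms a unit-diagonal triangular system relating $\gvectors{\nested \ssm \connectedComponents(\building)}$ to the family $\{\pi(\b{e}_{\rootset{B}{\nested}}) : B \in \nested \ssm \connectedComponents(\building)\}$. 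This latter family is a basis of~$\HH$ (it is the projection of a system of representatives of~$\ground$ modulo the connected components), hence so is the former, and $\R_{\ge 0} \gvectors{\nested}$ is a simplicial cone of full dimension.

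For completeness, I construct a map from listings $\sigma = (\sigma_1, \dots, \sigma_n)$ of~$\ground$ (where $n \eqdef |\ground|$) to maximal $\building$-nested sets via
\[
\sigma \mapsto \nested_\sigma \eqdef \set{B^\sigma_v}{v \in \ground}, \quad B^\sigma_v \eqdef \bigcup \set{B \in \building}{v \text{ is the }\sigma\text{-minimum of } B}.
\]
The closure of~$\building$ under intersecting unions ensures $B^\sigma_v \in \building$, and the compatibility conditions for nested sets are readily verified, so~$\nested_\sigma$ is a maximal $\building$-nested set. Inverting the triangular system from the previous paragraph yields the expansion $\b{x} = \sum_{B \in \nested_\sigma \ssm \connectedComponents(\building)} \lambda_B \gvector{B}$ with $\lambda_B = x_{\rootset{B}{\nested_\sigma}} - x_{\rootset{B^\uparrow}{\nested_\sigma}}$, where~$B^\uparrow$ denotes the parent of~$B$ in the rooted forest structure of~$\nested_\sigma$. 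Since parents in~$\nested_\sigma$ carry $\sigma$-earlier roots than their children, each $\lambda_B$ is strictly positive on the open braid cone $C_\sigma \eqdef \set{\b{x} \in \HH}{x_{\sigma_1} < \dots < x_{\sigma_n}}$. Completeness of $\nestedFan[\building]$ then follows from completeness of the braid fan.

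For the intersection property, the intersection $\nested \cap \nested'$ of two $\building$-nested sets is itself $\building$-nested (compatibility and non-merging conditions are inherited, and both contain $\connectedComponents(\building)$). The inclusion $\R_{\ge 0} \gvectors{\nested \cap \nested'} \subseteq \R_{\ge 0} \gvectors{\nested} \cap \R_{\ge 0} \gvectors{\nested'}$ is immediate; the reverse follows from uniqueness of the expansion in each simplicial cone, combined with the fact---granted by completeness and the explicit formula for~$\lambda_B$---that the open maximal cones are pairwise disjoint. The main obstacle is carrying out the M\"obius inversion over~$\nested_\sigma$ cleanly and verifying that the coefficients $\lambda_B$ collapse to single coordinate differences $x_{\rootset{B}{\nested_\sigma}} - x_{\rootset{B^\uparrow}{\nested_\sigma}}$; this is where the rooted forest structure of~$\nested_\sigma$ and, in particular, the pairwise disjointness of the maximal children of each block play a central role.
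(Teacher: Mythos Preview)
The paper does not prove this theorem; it is quoted from~\cite{Postnikov,FeichtnerSturmfels,Zelevinsky}. In those references the fan property is obtained \emph{indirectly}: one first builds the nestohedron (e.g.\ as the Minkowski sum $\sum_{B\in\building}\triangle_B$, see \cref{thm:nestohedron}) and then observes that $\nestedFan[\building]$ is its normal fan, so the intersection-as-common-face axiom comes for free. Your route is genuinely different and more hands-on; the simpliciality argument via the unit-triangular system is correct, and so is the coverage argument---the formula $\lambda_B=x_{\rootset{B}{\nested}}-x_{\rootset{B^\uparrow}{\nested}}$ in fact holds for \emph{every} maximal nested set $\nested$ (it follows in one line from $\b{e}_{\rootset{B}{\nested}}=\one_B-\sum_C\one_C$, summed over maximal children $C$ of $B$), so this is not the ``main obstacle'' you flag.

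The actual gap is in the intersection property. You assert that it ``follows from uniqueness of the expansion in each simplicial cone, combined with the fact\dots that the open maximal cones are pairwise disjoint,'' but neither ingredient is established, and together they would still not suffice. First, showing that each open braid chamber $C_\sigma$ lies in \emph{some} open nested cone does not show it lies in \emph{only one}; two full-dimensional simplicial cones can have overlapping interiors without being equal. To get disjointness you must prove that the system of strict inequalities $x_{\rootset{B}{\nested}}>x_{\rootset{B^\uparrow}{\nested}}$ (over all $B\in\nested$) determines $\nested$ uniquely from a generic $\b{x}$---an inductive argument on the forest (showing that the children of each $B$ are forced to be the connected components of $B\ssm\{\rootset{B}{\nested}\}$) that you do not carry out. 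Second, even granting pairwise disjoint interiors and full coverage, a finite family of full-dimensional simplicial cones need not be the maximal cones of a fan: two such cones can meet in a set that is a face of neither. Closing this requires either a local check at every codimension-one wall (the wall is a facet of exactly two maximal cones, lying on opposite sides) or the polytope shortcut the cited references use.
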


\begin{remark}
For a graph~$\graphG$, the nested fan~$\nestedFan[\building\graphG]$ coincides with the graphical nested fan~$\nestedFan[\graphG]$ introduced in \cref{subsec:graphicalNestedComplexGraphicalNestedFan} (which justifies our notation there).
\end{remark}

\begin{figure}[h]
	\capstart
	\centerline{\includegraphics[scale=.55]{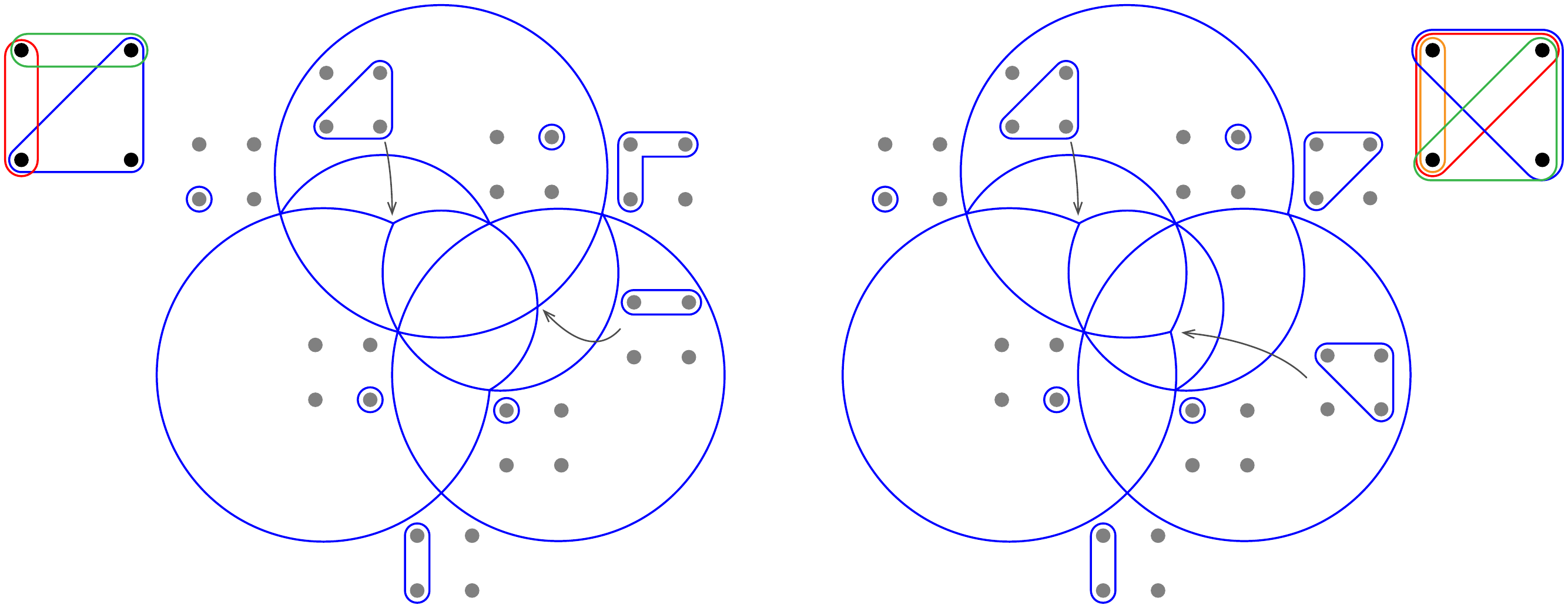}}
	\caption{Two nested fans. The rays are labeled by the corresponding blocks. As the fans are $3$-dimensional, we intersect them with the sphere and stereographically project them from the direction~$(-1,-1,-1)$.}
	\label{fig:nestedFans}
\end{figure}

\subsubsection{Nestohedron}
Again, the $\building$-nested fan is always the normal fan of a polytope, as shown in~\cite{Postnikov, FeichtnerSturmfels, Zelevinsky}.
We still denote by ~$\triangle_U \eqdef \conv\set{\b{e}_u}{u \in U}$ the face of the standard simplex~$\triangle_\ground$ corresponding to a subset~$U$ of~$\ground$.

\begin{figure}
	\capstart
	\centerline{\includegraphics[scale=.55]{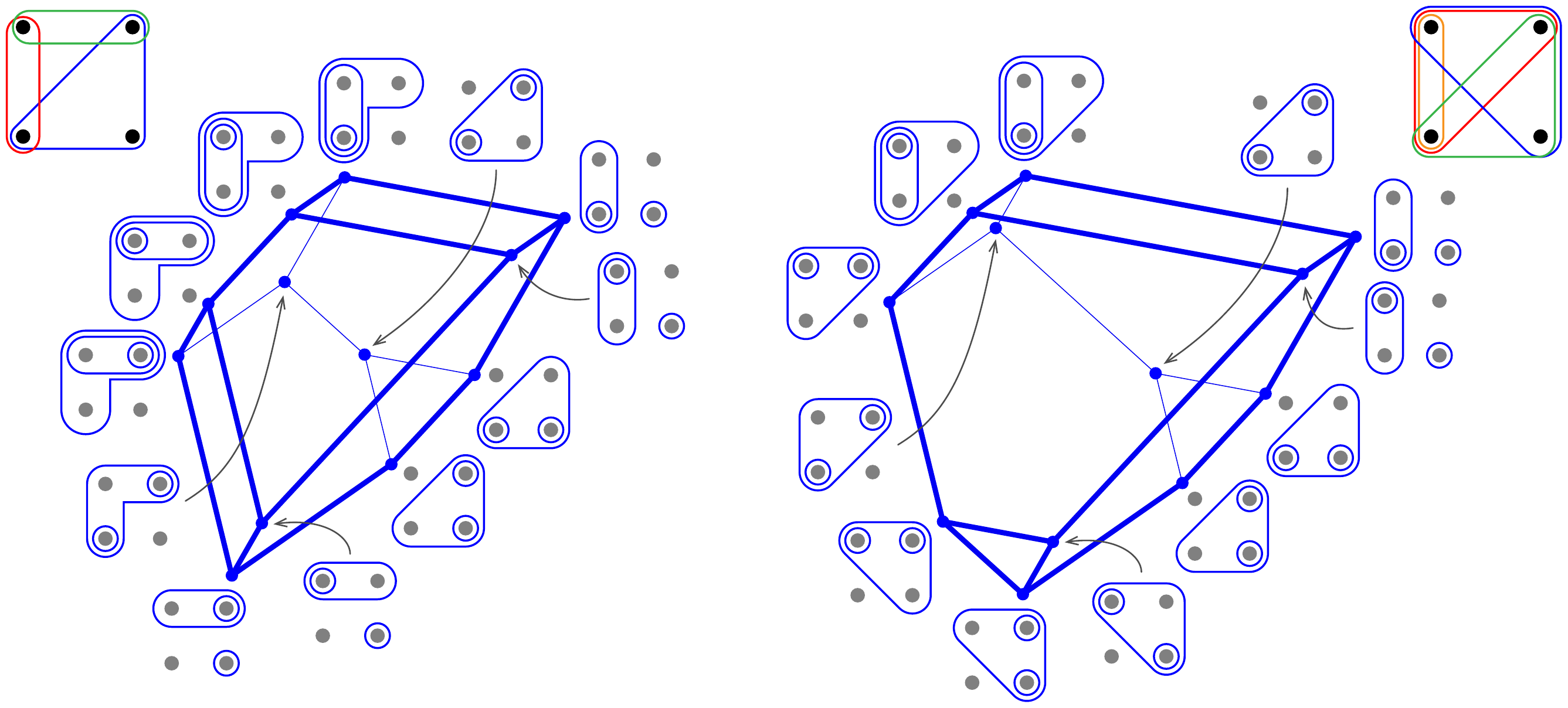}}
	\caption{Two nestohedra, realizing the nested fans of \cref{fig:nestedFans}. The vertices are labeled by the corresponding maximal nested sets.}
	\label{fig:nestohedra}
\end{figure}

\begin{theorem}[\cite{Postnikov, FeichtnerSturmfels, Zelevinsky}]
\label{thm:nestohedron}
For any building set~$\building$, the nested fan~$\nestedFan[\building]$ is the normal fan of a polytope.
For instance, $\nestedFan[\building]$ is the normal fan of
\begin{enumerate}[(i)]
\item the intersection of~$\HH$ with the hyperplanes~$\dotprod{\gvector{B}}{\b{x}} \le -3^{|B|}$ for all~$B \in \building$~\mbox{\cite{Devadoss, Pilaud-removahedra},}
\item the Minkowski sum~$\sum_{B \in \building} \triangle_B$ of the faces of the standard simplex given by all blocks of~$\building$~\cite{Postnikov}.
\end{enumerate}
\end{theorem}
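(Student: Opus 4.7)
The plan is to apply \cref{prop:characterizationPolytopalFan} to the essential complete simplicial fan~$\nestedFan[\building]$ of \cref{thm:nestedFan}: it suffices, in each case, to exhibit a height vector~$\b{h} \in \R^\building$ whose wall-crossing inequalities at every pair of adjacent maximal cones of~$\nestedFan[\building]$ are strict.

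The technical heart is to pin down, for each pair of adjacent maximal $\building$-nested sets~$\nested$ and~$\nested'$ with~$\nested \ssm \{B\} = \nested' \ssm \{B'\}$, the unique linear dependence among the $\b{g}$-vectors of~$\nested \cup \nested'$, normalized so that the coefficients of~$\gvector{B}$ and~$\gvector{B'}$ sum to~$2$. Letting~$P$ denote the smallest block of~$\nested \cap \nested'$ strictly containing both~$B$ and~$B'$, the expected identity, generalizing \cref{prop:exchangeablePairsGraphicalNestedFan}(iii), should read
\[
\gvector{B} + \gvector{B'} + \sum_{K \in \connectedComponents(P \ssm (B \cup B'))} \gvector{K} = \gvector{P} + \sum_{K \in \connectedComponents(B \cap B')} \gvector{K},
\]
where~$\connectedComponents(X)$ denotes the inclusion-maximal blocks of~$\building$ contained in~$X$. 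The identity reduces to a characteristic-vector bookkeeping, contingent on the combinatorial fact that the blocks of~$\connectedComponents(X)$ are pairwise disjoint and partition~$X$ for both~$X = B \cap B'$ and~$X = P \ssm (B \cup B')$: disjointness uses the closure of~$\building$ under unions of intersecting blocks, and the partition property uses that all singletons belong to~$\building$. Uniqueness of the dependence is automatic since the rays of two adjacent maximal simplicial cones span a codimension-one subspace.

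With this dependence in hand, construction~(i) is verified by setting~$\b{h}_B \eqdef -3^{|B|}$ and checking the strict positivity of
\[
3^{|P|} + \sum_{K \in \connectedComponents(B \cap B')} 3^{|K|} - 3^{|B|} - 3^{|B'|} - \sum_{K \in \connectedComponents(P \ssm (B \cup B'))} 3^{|K|},
\]
which follows from a $3$-adic estimate using $|B|, |B'| < |P|$ and $|K| < |P|$ for every~$K$ appearing, together with the size identity~$|B| + |B'| + \sum_{K \in \connectedComponents(P \ssm (B \cup B'))} |K| = |P| + |B \cap B'|$; this generalizes \cref{exm:constructionsGraphicalNestedFan}(i). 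Construction~(ii) is handled by observing that heights are additive under Minkowski sum and that~$\triangle_C$ contributes~$-1$ to the height at~$\gvector{X}$ exactly when~$C \subseteq X$, giving the height vector~$\b{h}_B \eqdef -|\{C \in \building : C \subseteq B\}|$; substituting into the dependence and applying inclusion-exclusion (relying again on the partition property of~$\connectedComponents$) collapses the wall-crossing inequality to the cardinality of the set~$\{C \in \building : C \subseteq P,\; C \not\subseteq B,\; C \not\subseteq B',\; C \cap (B \cup B') \ne \varnothing\}$, which is at least~$1$ because it contains~$P$ itself; this generalizes \cref{exm:constructionsGraphicalNestedFan}(ii).

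The main obstacle is the linear-dependence step: whereas in \cref{prop:exchangeablePairsGraphicalNestedFan}(iii) the dependence depends only on the exchanged pair~$\tube, \tube'$, for arbitrary building sets one must incorporate the surrounding combinatorial ``frame''~$(P, \connectedComponents(B \cap B'), \connectedComponents(P \ssm (B \cup B')))$. Identifying~$P$ unambiguously from the adjacency of~$\nested$ and~$\nested'$, and verifying that~$\connectedComponents(\cdot)$ partitions the appropriate sets by blocks of~$\building$, requires a careful use of the building-set axioms; this added subtlety is precisely what differentiates the graphical from the general case and motivates the notion of exchange frame that is developed in the remainder of \cref{sec:typeConeNestedFan}.
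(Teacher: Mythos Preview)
Your approach is correct and matches exactly what the paper does (the theorem is cited without proof, but the later material in \cref{prop:exchangeRelation} and \cref{exm:constructionsNestedFan} constitutes precisely the argument you outline). There is, however, one substantive point you understate.

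You correctly write down the candidate dependence
\[
\gvector{B} + \gvector{B'} + \sum_{K \in \connectedComponents(P \ssm (B \cup B'))} \gvector{K} = \gvector{P} + \sum_{K \in \connectedComponents(B \cap B')} \gvector{K},
\]
and you note that it holds as an identity of characteristic vectors once one knows that~$\connectedComponents(X)$ partitions~$X$. But for this to be \emph{the} linear dependence among the rays of~$\nested \cup \nested'$, every block appearing on either side must actually belong to~$\nested \cap \nested'$. You have~$P \in \nested \cap \nested'$ by your definition of~$P$, but the membership of each~$K \in \connectedComponents(B \cap B')$ and each~$K \in \connectedComponents(P \ssm (B \cup B'))$ in~$\nested \cap \nested'$ is a genuine combinatorial claim that does not follow from the partition property. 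This is the content of \cref{prop:forcedBlocks} in the paper, and its proof requires checking, for each such~$K$, that~$\nested \cup \{K\}$ remains a $\building$-nested set (hence~$K \in \nested$ by maximality); the argument uses the characterization of the pivots from \cref{prop:exchangeablePairsNestedFan} in an essential way for the components of~$P \ssm (B \cup B')$. Your final paragraph flags the identification of~$P$ and the partition property as the obstacles, but misses this forced-membership step, which is arguably the more delicate half of \cref{prop:exchangeRelation}.
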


\begin{definition}
Any polytope whose normal fan is the nested fan~$\nestedFan[\building]$ is called a \defn{nestohedron} of~$\building$ and denoted by~$\Nest$.
\end{definition}

For instance, \cref{fig:nestohedra} represents the nestohedra realizing the nested fans of \cref{fig:nestedFans} and obtained using the construction (ii) of \cref{thm:nestohedron}.

\begin{remark}
For a graph~$\graphG$, the nestohedra of~$\building\graphG$ are the graph associahedra of~$\graphG$.
\end{remark}

\subsubsection{Restrictions and contractions}
Following~\cite{Zelevinsky}, we describe a structural decomposition of links in nested complexes.
For any~$U \subseteq \ground$, define
\begin{itemize}
\item the \defn{restriction} of~$\building$ to~$U$ as the building set~$\building_{|U} \eqdef \set{B \in \building}{B \subseteq U}$,
\item the \defn{contraction} of~$U$ in~$\building$ as the building set~$\building_{/U} \eqdef \set{C \subseteq \ground \ssm U}{C \in \building \text{ or } C \cup U \in \building}$.
\end{itemize}

\begin{proposition}[{\cite[Prop.~3.2]{Zelevinsky}}]
\label{prop:links}
For $U \in \building \ssm \connectedComponents(\building)$, the link~$\set{C \subseteq \building \ssm \{U\}}{C \cup \{U\} \in \nestedComplex(\building)}$ is isomorphic to the Cartesian product~$\nestedComplex(\building_{|U}) \times \nestedComplex(\building_{/U})$.
\end{proposition}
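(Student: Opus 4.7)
The plan is to exhibit an explicit simplicial isomorphism $\Phi$ from the link of~$U$ to $\nestedComplex(\building_{|U}) \times \nestedComplex(\building_{/U})$. Given a $\building$-nested set~$\nested$ containing~$U$, any other block~$B \in \nested$ must be pairwise compatible with~$U$, and hence satisfies exactly one of $B \subseteq U$, $U \subseteq B$, or $B \cap U = \varnothing$. This suggests splitting $\nested$ by setting
\[
\nested_1 \eqdef \set{B \in \nested}{B \subseteq U}, \qquad \nested_2 \eqdef \set{B \ssm U}{B \in \nested,\ B \not\subseteq U},
\]
and defining $\Phi(\nested) \eqdef (\nested_1, \nested_2)$. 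The same formulas apply to any subface of the link obtained by dropping blocks of~$\nested$.

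The first step is to check that $\nested_1$ is a $\building_{|U}$-nested set: each of its blocks lies in $\building_{|U}$ by construction; $U$ is the unique maximal block of $\building_{|U}$ and lies in~$\nested_1$ since $U \in \nested$; pairwise compatibility is inherited from $\nested$; and the union axiom transfers immediately because $\building_{|U} \subseteq \building$. Likewise, each $B \ssm U \in \nested_2$ lies in $\building_{/U}$ by the definition of contraction (either $B \cap U = \varnothing$ so $B \ssm U = B \in \building$, or $U \subsetneq B$ so $(B \ssm U) \cup U = B \in \building$). Membership of $\connectedComponents(\building_{/U})$ in $\nested_2$ follows from $\connectedComponents(\building) \subseteq \nested$ by inspecting separately the connected component of~$\building$ containing~$U$ and those disjoint from~$U$.

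For the inverse, given $(\nested_1, \nested_2)$, reconstruct $\nested \eqdef \nested_1 \cup \set{\Psi_2(C)}{C \in \nested_2}$, where $\Psi_2(C) \eqdef C \cup U$ if $C \cup U \in \building$, and $\Psi_2(C) \eqdef C$ otherwise. The apparent ambiguity (when both $C$ and $C \cup U$ could belong to~$\building$) is resolved by the nested axiom itself: if $B \in \nested$ satisfies $B \cap U = \varnothing$, then $\{B, U\} \subseteq \nested$ forces $B \cup U \notin \building$, so whenever $C \cup U \in \building$ the only valid preimage of~$C$ under $B \mapsto B \ssm U$ is $B = C \cup U$. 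Reversing the verifications of the previous paragraph confirms that $\Psi(\nested_1, \nested_2)$ is a $\building$-nested set containing~$U$, and the maps $\Phi$ and~$\Psi$ are mutually inverse by construction.

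Finally, $\Phi$ visibly respects inclusion of nested sets (the splitting is blockwise), so it induces the desired simplicial isomorphism between the link of~$U$ and the Cartesian product $\nestedComplex(\building_{|U}) \times \nestedComplex(\building_{/U})$. The main technical hurdle is the careful verification of the union axiom for~$\nested_2$ inside $\building_{/U}$: one must translate a union in $\building_{/U}$ back to the corresponding union in $\building$, correctly tracking whether~$U$ must be adjoined, and this is the single place where the defining axioms of a building set are genuinely invoked.
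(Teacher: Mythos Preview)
The paper does not supply its own proof of this proposition; it is quoted verbatim from \cite[Prop.~3.2]{Zelevinsky} and used as a black box. Your sketch follows the standard argument (essentially Zelevinsky's): split a $\building$-nested set containing~$U$ into the blocks contained in~$U$ and the $U$-complements of the remaining blocks, and observe that the assignment is an order-preserving bijection. Your handling of the apparent ambiguity in~$\Psi_2$ (when both $C$ and $C \cup U$ lie in~$\building$) via the nested-set union axiom is exactly the right observation.

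The one reservation is that your final paragraph explicitly names the verification of the union axiom for~$\nested_2$ inside~$\building_{/U}$ as ``the main technical hurdle'' and then does not carry it out. For a complete proof you should actually write this step: given pairwise disjoint $C_1, \dots, C_k \in \nested_2$ with $\bigcup_i C_i \in \building_{/U}$, lift each $C_i$ to its preimage $B_i \in \nested$, argue (splitting on whether $\bigcup_i C_i$ or $\big(\bigcup_i C_i\big) \cup U$ lies in~$\building$) that the corresponding union of the~$B_i$, possibly together with~$U$, lies in~$\building$, and conclude from the nested axiom for~$\nested$ that it already lies in~$\nested$. The same remark applies to the sentence ``Reversing the verifications of the previous paragraph confirms that $\Psi(\nested_1,\nested_2)$ is a $\building$-nested set'': this reversal is not entirely symmetric and deserves at least a line. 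As a plan the proposal is correct; as a proof it stops just short of the computations it itself identifies as essential.
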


In particular, two building blocks $B$ and~$B'$ in~$U$ (resp.~in~$\ground \ssm U$) are exchangeable in~$\nestedComplex(\building)$ if and only if they are exchangeable in~$\nestedComplex(\building_{|U})$ (resp.~in~$\nestedComplex(\building_{/U})$).

Slightly abusing notation when~$\building$ is clear from the context, we define the \defn{connected components} of~$U$ as~$\connectedComponents(U) \eqdef \connectedComponents(\building_{|U})$.
For instance, for the building set~$\building_\circ$ whose elementary blocks are represented in \cref{fig:exmNested}\,(left) and $U = \{1, 2, 4, 5, 7, 8\}$, we have~${\building_\circ}_{|U} = \{1, 2, 4, 5, 7, 8, 14, 25\}$ so that~$\connectedComponents(U) = \{14, 25, 7, 8\}$.
Note that the definition of building sets implies that
\begin{itemize}
\item for any~$U \subseteq \ground$, the connected components $\connectedComponents(U)$ define a partition of~$U$,
\item for any~$U, U' \subseteq \ground$ such that~$U \cap U' = \varnothing$ and there is no~$B \in \building$ with~$B \subseteq U \sqcup U'$ and~$U \cap B \ne \varnothing \ne U' \cap B$, we have~$\connectedComponents(U \sqcup U') = \connectedComponents(U) \sqcup \connectedComponents(U')$.
\end{itemize}


\subsection{Exchangeable building blocks and exchange frames}
\label{subsec:exchangeableBuildingBlocks}

We now provide an analogue of \cref{prop:exchangeablePairsGraphicalNestedFan} characterizing the exchangeable blocks for arbitrary building sets.
The situation is however much more technical, as highlighted in \cref{rem:differencesGraphicalExchangeables,rem:differencesGraphicalExchangeRelation}.
We start with two useful lemmas.

\begin{lemma}
\label{lem:minimalElementContainingBlock}
For any $\building$-nested set~$\nested$ and any block~$B \in \building \ssm \connectedComponents(\building)$, the set~$\bigset{C \in \nested}{B \subsetneq C}$ admits a unique (inclusion) minimal element~$M$.
Moreover, if $B \notin \nested$, then~$M$ is also the unique (inclusion) maximal element of~$\bigset{C \in \nested}{\rootset{C}{\nested} \cap B \ne \varnothing}$.
\end{lemma}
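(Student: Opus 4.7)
The plan is to handle the two assertions in order, using only the nested-set axioms in the reformulation given just before the lemma.

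For the first part, I would first note that the set $\bigset{C \in \nested}{B \subsetneq C}$ is non-empty: since $B \notin \connectedComponents(\building)$, $B$ is strictly contained in some connected component $K \in \connectedComponents(\building) \subseteq \nested$. I would then show this set is totally ordered by inclusion. Indeed, if $C_1, C_2 \in \nested$ both strictly contain $B$, then $B \subseteq C_1 \cap C_2$ and $B \neq \varnothing$, so the first axiom of nested sets forces $C_1 \subseteq C_2$ or $C_2 \subseteq C_1$. Being a finite non-empty chain, the set has a unique minimum $M$.

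For the second part, assume $B \notin \nested$ and let $C_1, \dots, C_k$ be the (inclusion) maximal elements of $\nested$ strictly contained in $M$. By definition of $M$, these $C_i$ are pairwise disjoint, and $\rootset{M}{\nested} = M \ssm (C_1 \cup \cdots \cup C_k)$. I would prove $\rootset{M}{\nested} \cap B \neq \varnothing$ by contradiction: if $B \subseteq C_1 \cup \cdots \cup C_k$, let $I = \{i : B \cap C_i \neq \varnothing\}$. The case $|I| = 1$ gives $B \subseteq C_i \subsetneq M$, and then either $B = C_i \in \nested$ (contradicting $B \notin \nested$) or $B \subsetneq C_i$ (contradicting minimality of $M$). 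The case $|I| \geq 2$ is the crux: since $B \in \building$ intersects each $C_i$ ($i \in I$) non-trivially, each $B \cup C_i$ is in $\building$, and a chain of such unions (all sharing points of $B$) shows that $B \cup \bigcup_{i \in I} C_i = \bigcup_{i \in I} C_i$ belongs to $\building$. Taking $\nested[X] = \{C_i : i \in I\} \subseteq \nested$, its union lies in $\building$ but is not one of its elements (since the $C_i$ are disjoint and $|I| \geq 2$), contradicting the nested-set property that $\bigcup \nested[X] \notin \building \ssm \nested[X]$.

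Finally, I would show $M$ is the unique maximal element of $\bigset{C \in \nested}{\rootset{C}{\nested} \cap B \neq \varnothing}$. Given any such $C$, pick $v \in \rootset{C}{\nested} \cap B$. Then $v \in C \cap M$, so $C$ and $M$ are comparable. If $M \subsetneq C$, then $M$ is a strict $\nested$-subset of $C$ containing $v$, so $v \notin \rootset{C}{\nested}$, a contradiction. Hence $C \subseteq M$, which proves both the maximality and the uniqueness.

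The main obstacle is the $|I| \geq 2$ step in showing $\rootset{M}{\nested} \cap B \neq \varnothing$: it requires simultaneously using that $B$ is a block (to iteratively close up unions inside $\building$) and invoking the nested-set axiom on the family of disjoint children $\{C_i\}_{i\in I}$. The other ingredients are straightforward consequences of the axioms.
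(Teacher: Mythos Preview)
Your proof is correct. The first part is identical to the paper's. For the second part, you and the paper take reversed routes: the paper starts from the set~$\nested[Y] \eqdef \bigset{C \in \nested}{\rootset{C}{\nested} \cap B \ne \varnothing}$, shows directly that $\bigcup \nested[Y] \in \building$ (since every element of~$\nested[Y]$ meets~$B$ and in fact $B \subseteq \bigcup \nested[Y]$), concludes via the nested-set axiom that $M' \eqdef \bigcup \nested[Y]$ lies in~$\nested[Y]$ and is its unique maximum, and then checks that $M'$ is the minimum of~$\nested[X]$, hence equals~$M$. You instead start from~$M$, show $M \in \nested[Y]$ by analysing its children~$C_i$ in~$\nested$ and deriving a contradiction from the nested-set axiom applied to $\{C_i\}_{i \in I}$, and then verify that every element of~$\nested[Y]$ is contained in~$M$. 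Both arguments hinge on the same mechanism (building-set closure of a family meeting~$B$, followed by the nested-set axiom), but the paper's version is slightly more economical since it handles existence and maximality of the maximum of~$\nested[Y]$ in one stroke, whereas your version separates membership of~$M$ in~$\nested[Y]$ from its maximality; on the other hand, your local analysis through the children of~$M$ is perhaps more transparent about where the contradiction comes from.
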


\begin{proof}
Let~$\nested[X] \eqdef \bigset{C \in \nested}{B \subsetneq C}$ and~$\nested[Y] \eqdef \bigset{C \in \nested}{\rootset{C}{\nested} \cap B \ne \varnothing}$.
Note first that neither~$\nested[X]$ nor~$\nested[Y]$ are empty since $\varnothing \ne B \notin \connectedComponents(\building)$. 
Since all elements of~$\nested[X]$ contain~$B$ and~$\nested$ is a \mbox{$\building$-nested} set, $\nested[X]$ forms a chain by inclusion, and thus admits a unique inclusion minimal element~$M$.
Moreover, any building block in~$\nested[Y]$ intersects~$B$ so that~$\bigcup \nested[Y] = B \cup \bigcup \nested[Y]$ is in~$\building$.
Hence, $\nested[Y]$ admits a unique maximal element~$M' \eqdef \bigcup \nested[Y]$.
By definition, $B \subseteq M'$.
If~$B \notin \nested$, then~$B \ne M'$ since~$M' \in \nested[Y] \subseteq \nested$.
Hence, $M' \in \nested[X]$.
Moreover, for any~$C \in \nested$ such that~$C \subsetneq M'$, we have~$C \cap \rootset{M'}{\nested} = \varnothing$ so that~$B \not\subseteq C$ and~$C \notin \nested[X]$.
We conclude that~$M' = M$.
\end{proof}

\begin{lemma}
\label{lem:upperBlocksExchange}
If~$\nested$ and~$\nested'$ are two adjacent maximal $\building$-nested sets with~$\nested \ssm \{B\} = \nested \ssm \{B'\}$, then $\set{C \in \nested}{B \subsetneq C} = \set{C' \in \nested'}{B' \subsetneq C'}$.
\end{lemma}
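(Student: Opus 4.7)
The plan is to produce in each nested set a ``universal minimum'' above the exchanged block, using \cref{lem:minimalElementContainingBlock}, and then show that these two minima coincide and strictly contain both $B$ and $B'$. Since connected components of $\building$ lie in every maximal nested set, neither $B$ nor $B'$ is in $\connectedComponents(\building)$, so \cref{lem:minimalElementContainingBlock} applies: let $M$ (resp.\ $M'$) be the unique minimum of $\set{C \in \nested}{B \subsetneq C}$ (resp.\ of $\set{C' \in \nested'}{B' \subsetneq C'}$). Since $M \ne B$ and $M' \ne B'$, both $M$ and $M'$ lie in the common part $\nested \cap \nested' = \nested \ssm \{B\} = \nested' \ssm \{B'\}$.

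The hard step will be to prove that $B' \subsetneq M$ (the symmetric statement $B \subsetneq M'$ then follows by swapping the roles of $\nested$ and $\nested'$). I would argue by contradiction: assume $B' \not\subsetneq M$. Using $B' \ne M$ (as $M \in \nested$ but $B' \notin \nested$), this gives $\set{C' \in \nested'}{C' \subsetneq M} = \set{C \in \nested \cap \nested'}{C \subsetneq M}$, while $\set{C \in \nested}{C \subsetneq M}$ is the same set augmented by $B$. Since $\rootset{M}{\nested}$ and $\rootset{M}{\nested'}$ are both singletons by maximality, comparing them forces $B \subseteq \bigcup_{C \in \nested \cap \nested',\, C \subsetneq M} C$. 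Each such $C$ is in $\nested$ and strictly below $M$, so by the nested condition and by minimality of $M$, one must have $C \subsetneq B$ or $C \cap B = \varnothing$ (the middle case $B \subsetneq C \subsetneq M$ contradicts minimality of $M$, and $C = B$ is impossible since $B \notin \nested'$). Hence $B \subseteq \bigcup_{C \in \nested,\, C \subsetneq B} C$, contradicting that $\rootset{B}{\nested}$ is a singleton.

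With $B' \subsetneq M$ and $B \subsetneq M'$ in hand, the equality $M = M'$ follows: $M'$ lies in $\nested$ and strictly contains $B$, so $M \subseteq M'$ by minimality of $M$, and symmetrically $M' \subseteq M$. Finally, since $B \subsetneq M$ and $B' \subsetneq M$, the chain $\set{C \in \nested}{B \subsetneq C}$ coincides with $\set{C \in \nested}{M \subseteq C} = \set{C \in \nested \cap \nested'}{M \subseteq C}$, and symmetrically for $\nested'$, so the two chains agree, completing the proof.
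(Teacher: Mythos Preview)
Your proof is correct, but it takes a genuinely different route from the paper's argument.

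The paper argues directly: assuming there is some $C \in \nested \cap \nested'$ with $B \subsetneq C$ but $B' \not\subseteq C$, it shows that $\nested \cup \nested'$ is itself a $\building$-nested set (by a case analysis on subsets $\nested[X] \subseteq \nested \cup \nested'$ whose union lies in~$\building$, using~$C$ to absorb~$B$ whenever both~$B$ and~$B'$ appear), contradicting the maximality of~$\nested$. This is short and uses only the raw definition of nested sets.

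Your argument instead exploits the root structure of maximal nested sets. You localize the problem at the minimal block~$M$ above~$B$ via \cref{lem:minimalElementContainingBlock}, and then compare $\rootset{M}{\nested}$ with $\rootset{M}{\nested'}$: if $B' \not\subseteq M$, the two roots differ only in whether~$B$ is removed, and the singleton condition forces~$B$ to be covered by strictly smaller blocks of~$\nested$, contradicting that $\rootset{B}{\nested}$ is nonempty. This is a cardinality argument rather than a nested-set verification, and it leans more heavily on \cref{lem:minimalElementContainingBlock} and on the characterization of maximality via singleton roots. What it buys you is a more structural picture: you identify the common parent~$M = M'$ explicitly and deduce the equality of chains from it, which is exactly the information needed downstream for the definition of the exchange frame. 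The paper's proof is more economical but less constructive in this sense.
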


\begin{proof}
Assume for instance that there is~$C \in \nested \cap \nested'$ such that~$B \subsetneq C$ but~$B' \not\subseteq C$.
We then claim that~$\nested \cup \nested'$ would be a $\building$-nested set, contradicting the maximality of~$\nested$ and~$\nested'$.
Consider a subset~$\nested[X]$ of~$\nested \cup \nested'$ whose union~$\bigcup \nested[X]$ is in~$\building$.
If~$B \notin \nested[X]$, then~$\nested[X] \subseteq \nested'$, hence $\bigcup \nested[X]$ is in~$\nested[X]$ as~$\nested'$ is a $\building$-nested set.
Similarly, if~$B' \notin \nested[X]$, then~$\bigcup \nested[X]$ is in~$\nested[X]$.
Assume now that both~$B$ and~$B'$ belong to~$\nested[X]$.
Define~$\nested[Y] \eqdef \{C\} \cup \nested[X] \ssm \{B\}$.
Note that $\nested[Y] \subseteq \nested'$ since~$B \notin \nested[Y]$.
Moreover, $\bigcup \nested[Y] = C \cup \bigcup \nested[X]$ belongs to~$\building$ since~$C$ and~$\bigcup \nested[X]$ both belong to~$\building$ and intersect~$B$.
Hence, $\bigcup \nested[Y]$ is in~$\nested[Y]$ since~$\nested'$ is a $\building$-nested set.
Note that~$\bigcup \nested[Y] \ne C$ since~$B' \not\subseteq C$ and~$B' \in \nested[Y]$.
Therefore~$\bigcup \nested[Y]$ is in~$\nested[X]$, and thus~$\bigcup \nested[X] = \bigcup \nested[Y]$ is in~$\nested[X]$.
\end{proof}

For two adjacent maximal $\building$-nested sets~$\nested$ and~$\nested'$ with~$\nested \ssm \{B\} = \nested \ssm \{B'\}$, we say that
\begin{itemize}
\item the unique minimal element~$P$ of $\bigset{C \in \nested}{B \subsetneq C}  = \bigset{C' \in \nested'}{B' \subsetneq C'}$ is the \defn{parent},
\item the vertices~$v \eqdef \rootset{P}{\nested'}$ and~$v' \eqdef \rootset{P}{\nested}$ are the  \defn{pivots}, and
\item the triple $(B, B', P)$ is the  \defn{frame}
\end{itemize}
of the exchange between~$\nested$ and~$\nested'$. Note that the parent is well-defined by \cref{lem:minimalElementContainingBlock,lem:upperBlocksExchange}.
We call an \defn{exchange frame} a triple~$(B, B', P)$ which is the frame of an exchange between two adjacent maximal $\building$-nested sets.
For instance, for the two adjacent maximal $\building_\circ$-nested sets~$\nested_\circ$ and~$\nested'_\circ$ represented in \cref{fig:exmNested}\,(middle), we have~$B = 14$, $B' = 25$, $P = 12345$, $v = 1$ and $v' = 2$.
The corresponding exchange frame is illustrated in \cref{fig:exmNested}\,(right).

We are now ready to characterize the pairs of exchangeable building blocks for arbitrary building sets.
For three blocks~$B, C, P \in \building$, we abbreviate the conditions~$B \cap C \ne \varnothing$ and $C \subseteq P$ but~$C \not\subseteq B$ into the short notation~$\leaving{B}{C}{P}$.
The following statement generalizes \cref{prop:exchangeablePairsGraphicalNestedFan}\,(i).

\begin{proposition}
\label{prop:exchangeablePairsNestedFan}
Two blocks~$B, B' \in \building$ are exchangeable in~$\nestedFan[\building]$ if and only if there exist a block~$P \in \building$, and some vertices~$v \in B \ssm B'$ and~$v' \in B' \ssm B$ such that
\begin{itemize}
\item $B \subsetneq P$ and~$B' \subsetneq P$, and
\item $v' \in C$ for any~$\leaving{B}{C}{P}$ while $v \in C'$ for any~$\leaving{B'}{C'}{P}$.
\end{itemize}
\end{proposition}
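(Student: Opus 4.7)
The plan is to prove both directions after a reduction via \cref{prop:links} to the case where $P$ is a connected component of $\building$: the exchange $(B, B')$ takes place inside the link of $P$, and the statement's conditions only involve blocks of $\building$ contained in $P$, so we may restrict to $\building_{|P}$ and assume $P = V$ is the top connected component. In this setting, $\nested$ and $\nested'$ are maximal $\building$-nested sets containing $P$, and the exchange pivots are $v = \rootset{P}{\nested'}$ and $v' = \rootset{P}{\nested}$.

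For the forward direction, I first establish the \emph{separation property}: $v' \in C$ for every $C \in \building$ with $\leaving{B}{C}{P}$. Arguing by contrapositive, assume $v' \notin C$. If $C \in \nested$, compatibility of $C$ with $B$ together with the minimality of $P$ above $B$ quickly forces $C \subseteq B$. If $C \notin \nested$, I apply \cref{lem:minimalElementContainingBlock} to produce the unique $M \in \nested$ with $C \subsetneq M$, which also equals $\max \{E \in \nested : \rootset{E}{\nested} \cap C \ne \varnothing\}$; since $v' \notin C$ gives $\rootset{P}{\nested} \cap C = \varnothing$, we deduce $M \ne P$. Picking any $u \in B \cap C$, the minimal $\nested$-block $m(u)$ containing $u$ satisfies $m(u) \subseteq B$ (by minimality, as $B \in \nested$ contains $u$) and $m(u) \subseteq M$ (as $\rootset{m(u)}{\nested} \ni u \in C$), so $M \cap B \ne \varnothing$; compatibility of $M$ with $B$ in $\nested$ combined with $M \ne P$ then forces $M \subseteq B$, hence $C \subseteq B$. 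Once separation is established, the membership conditions $v \in B \ssm B'$ and $v' \in B' \ssm B$ follow from a root-partition analysis; the alternative scenario $v = v' \in V \ssm (B \cup B')$ is ruled out by observing that $\nested_0 \cup \{B \cup B'\}$ (where $\nested_0 \eqdef \nested \ssm \{B\}$) would then be a third maximal nested set completing $\nested_0$---its nestedness follows precisely from the separation property applied to every potential union violation---contradicting the pseudomanifold property of the simplicial sphere $\nestedComplex(\building)$.

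For the reverse direction, the strategy is to build $\nested$ and $\nested'$ explicitly. The hypothesis yields the key structural fact that every $C \in \building$ contained in $V \ssm \{v, v'\}$ lies entirely in one of the four pairwise disjoint pieces $B \cap B'$, $B \ssm (B' \cup \{v\})$, $B' \ssm (B \cup \{v'\})$, and $V \ssm (B \cup B')$: any block straddling $B$ and its complement inside $V$ would produce a triple $\leaving{B}{C}{P}$ with $v' \notin C$, and symmetrically for $B'$. Choosing a maximal nested set $\mathcal{N}_i$ of the restriction of $\building$ to each piece and setting $\mathcal{N}_0 \eqdef \mathcal{N}_1 \cup \mathcal{N}_2 \cup \mathcal{N}_3 \cup \mathcal{N}_4 \cup \{V\}$, the candidates are $\nested \eqdef \mathcal{N}_0 \cup \{B\}$ and $\nested' \eqdef \mathcal{N}_0 \cup \{B'\}$, both of cardinality $|V|$. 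The technical crux is verifying that $\nested$ is a $\building$-nested set (by symmetry the same argument handles $\nested'$): for any $\nested[X] \subseteq \nested$ with $\bigcup \nested[X] \in \building$, the only non-trivial case is when $\bigcup \nested[X]$ mixes $B$ (or a block from $\mathcal{N}_1 \cup \mathcal{N}_2$) with a block from $\mathcal{N}_3 \cup \mathcal{N}_4$; then $\leaving{B}{\bigcup \nested[X]}{P}$ forces $v' \in \bigcup \nested[X]$, impossible since every piece avoids $v'$. The main obstacle throughout is organizing the piece decomposition and case analysis carefully so that the appropriate half of the hypothesis is invoked in each sub-case.
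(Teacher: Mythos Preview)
Your argument is correct, but it takes a more circuitous route than the paper's in both directions.

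For the forward direction, the paper proceeds in the opposite order: it first shows directly that~$v \in B \ssm B'$ and~$v' \in B' \ssm B$ by applying \cref{lem:minimalElementContainingBlock} to~$B$ in~$\nested'$ (resp.~$B'$ in~$\nested$), which immediately gives~$\rootset{P}{\nested'} \in B$ and~$\rootset{P}{\nested} \in B'$. This makes your ``alternative scenario~$v = v'$'' vacuous and the pseudomanifold argument unnecessary (and that argument would anyway require~$B \cup B' \in \building$, which need not hold). For the separation property, the paper applies \cref{lem:minimalElementContainingBlock} not to~$C$ itself but to the block~$B \cup C$: since~$B \subsetneq B \cup C \subseteq P$ and~$B \cup C \notin \nested$ by minimality of~$P$, the lemma yields~$v' = \rootset{P}{\nested} \in B \cup C$, hence~$v' \in C$. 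This one-line argument replaces your two-case analysis on whether~$C \in \nested$.

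For the reverse direction, the paper simply sets~$U \eqdef P \ssm \{v,v'\}$, picks an \emph{arbitrary} maximal $\building_{|U}$-nested set~$\nested[M]$, and checks that~$\nested[M] \cup \{B\}$ and~$\nested[M] \cup \{B'\}$ are maximal $\building_{|P}$-nested sets. Your four-piece decomposition is really the observation that the connected components of~$\building_{|U}$ refine the partition~$\{B \cap B', (B \ssm B') \ssm \{v\}, (B' \ssm B) \ssm \{v'\}, P \ssm (B \cup B')\}$; this is true and pleasant, but it only produces one particular~$\nested[M]$ and adds case analysis that the paper avoids. What your decomposition buys is an explicit structural picture of the neighbourhood of the exchange, at the cost of a longer verification.
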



\begin{proof}
Assume first that~$B$ and~$B'$ are exchangeable.
Let~$\nested$ and~$\nested'$ be two adjacent maximal $\building$-nested sets such that~$\nested \ssm \{B\} = \nested \ssm \{B'\}$.
Let~$P$ be the parent and $v,v'$ be the pivots of this exchange. 
Note that~$v \in B$ (by \cref{lem:minimalElementContainingBlock}) but $v \notin B'$ (by definition, since~$B' \in \nested'$ and~$B' \subsetneq P$).
Similarly, $v' \in B' \ssm B$.
Consider now a building block~$C$ such that~$\leaving{B}{C}{P}$.
By definition, $B \subsetneq B \cup C \subseteq P$ and~$B \cup C \in \building$.
If~$B \cup C = P$, then~$v' = \rootset{P}{\nested}$ belongs to~$B \cup C$ and thus to~$C$.
If~$B \cup C \ne P$, then~$P$ is the inclusion minimal element of~$\set{D \in \nested}{B \cup C \subsetneq D}$.
Since~$B \cup C \notin \nested$ by minimality of~$P$ in~$\set{D \in \nested}{B \subsetneq D}$, we obtain by \cref{lem:minimalElementContainingBlock} that~$v' = \rootset{P}{\nested}$ belongs to~$B \cup C$ and thus to~$C$.
Similarly, $v \in C'$ for any~$\leaving{B'}{C'}{P}$.

Conversely, consider~$B, B' \in \building$ so that there is~$P \in \building$, $v \in B \ssm B'$ and~${v' \in B' \ssm B}$ satisfying the conditions of \cref{prop:exchangeablePairsNestedFan}.
Let~$U \eqdef P \ssm \{v,v'\}$, and~$\nested[M]$ denote an arbitrary maximal $\building_{|U}$-nested set.
Let~$\nested \eqdef \nested[M] \cup \{B\}$ and~$\nested' \eqdef \nested[M] \cup \{B'\}$.
Consider a subset~$\nested[X]$ of~$\nested$ whose union~$\bigcup \nested[X]$ is in~$\building$.
If~$B \notin \nested[X]$, then~$\nested[X] \subseteq \nested[M]$, hence~$\bigcup \nested[X]$ is in~$\nested[X]$ since~$\nested[M]$ is a $\building_{|U}$-nested set.
If~$B \in \nested[X]$, since $B \cap \bigcup \nested[X] \ne \varnothing$ and~$\bigcup \nested[X] \subseteq P$ but~${v' \notin \bigcup \nested[X]}$, the conditions of \cref{prop:exchangeablePairsNestedFan} ensure that~$\bigcup \nested[X] \subseteq B$, so that~$\bigcup \nested[X] = B$ is in~$\nested[X]$.
Hence, $\nested$ is a $\building_{|P}$-nested set.
It is moreover maximal since~${|\nested| = |\nested[M] \cup \{B, P\}| = |\nested[M]| + 2 = |U| + 2 = |P|}$.
By symmetry, $\nested'$ is a maximal $\building_{|P}$-nested set.
Since~$\nested \ssm \{B\} = \nested' \ssm \{B'\}$, we obtain that~$B$ and~$B'$ are exchangeable in~$\nestedComplex(\building_{|P})$, hence in~$\nestedComplex(\building)$ by \cref{prop:links}.
The parent of this exchange is~$P$ and the pivots are $v$ and~$v'$.
\end{proof}

\begin{remark}
\label{rem:differencesGraphicalExchangeables}
For the graphical nested fans, \cref{prop:exchangeablePairsGraphicalNestedFan}\,(i) ensures that if~$B$ and~$B'$ are exchangeable, then $B \cup B'$ is always a block and is the only possible parent (note however that $B$ and $B'$ are not necessarily exchangeable when~$B \cup B'$ is a block).
In contrast to the graphical case, for a general building set,
\begin{itemize}
\item the same exchangeable blocks may admit several possible parents and pivots,
\item the set of parents does not necessarily admit a unique (inclusion) minimal element,
\item $B \cup B'$ is not always a block when~$B$ and~$B'$ are exchangeable. In other words, $B$ and~$B'$ can be exchangeable even if~$\{B,B'\} \cup \connectedComponents(\building)$ is a $\building$-nested set.
\end{itemize}
For instance, in the building set~$\building_\circ$ of \cref{fig:exmNested}\,(left), the blocks $B = 14$ and $B' = 25$ are simultaneously compatible and exchangeable. They are exchangeable with parent~$12345$ and pivots~$(1,2)$ or with parent~$12456$ and pivots~$(4,5)$.
\end{remark}

\begin{remark}
Observe also that it follows from the definitions that
\begin{itemize}
\item it suffices to check the condition of \cref{prop:exchangeablePairsNestedFan} for~$C$ and~$C'$ elementary blocks of~$\building$,
\item if~$B$ and~$B'$ are exchangeable, then~$B \not\subseteq B'$ and~$B' \not\subseteq B$,
\item if $(B, B', P)$ is an exchange frame and  ${B \cup B' \subseteq P' \subseteq P}$, then $(B, B', P')$ is also an exchange frame (using the same pivots),
\item if~$B$ and~$B'$ are exchangeable and~$B \cup B'$ is a block (in particular if~$B \cap B' \ne \varnothing$), then~$(B, B', B \cup B')$ is an exchange frame.
\end{itemize}
\end{remark}

We now apply \cref{prop:exchangeablePairsNestedFan} to identify some exchange frames that will play an important role in the description of the type cone of the $\building$-nested fan.

\begin{proposition}
\label{prop:maximalExchanges}
If~$B, B', P \in \building$ are such that~$B$ and~$B'$ are two distinct blocks of~$\building$ strictly contained in~$P$ and inclusion maximal inside~$P$, then~$(B, B', P)$ is an exchange frame.
\end{proposition}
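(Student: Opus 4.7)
The plan is to apply \cref{prop:exchangeablePairsNestedFan} directly: I need to produce vertices $v \in B \ssm B'$ and $v' \in B' \ssm B$ such that every block $C$ with $\leaving{B}{C}{P}$ contains $v'$ and, symmetrically, every block $C'$ with $\leaving{B'}{C'}{P}$ contains $v$. The claim will be that \emph{any} choice of $v \in B \ssm B'$ and $v' \in B' \ssm B$ works, which makes the proof very clean.

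First I check that these differences are nonempty. If we had $B \subseteq B'$, then since $B$ and $B'$ are both strictly contained in $P$ and $B \ne B'$, this would contradict the inclusion maximality of $B$ inside $P$; similarly $B' \not\subseteq B$. Hence $B \ssm B' \ne \varnothing$ and $B' \ssm B \ne \varnothing$, and we may pick arbitrary $v \in B \ssm B'$ and $v' \in B' \ssm B$.

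The central step is verifying the condition on $v'$ (the condition on $v$ is symmetric). Let $C \in \building$ with $\leaving{B}{C}{P}$, that is, $B \cap C \ne \varnothing$, $C \subseteq P$, and $C \not\subseteq B$. If $C = P$, then $v' \in B' \subseteq P = C$ and we are done. Otherwise $C \subsetneq P$; since $B \cap C \ne \varnothing$, the union $B \cup C$ belongs to $\building$, and since $C \not\subseteq B$ we have $B \subsetneq B \cup C \subseteq P$. The inclusion maximality of $B$ among blocks strictly contained in $P$ forces $B \cup C = P$, equivalently $P \ssm B \subseteq C$. Since $v' \in B' \ssm B \subseteq P \ssm B \subseteq C$, this gives $v' \in C$ as desired.

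By \cref{prop:exchangeablePairsNestedFan}, $B$ and $B'$ are exchangeable with parent $P$ and pivots $(v, v')$, so $(B, B', P)$ is an exchange frame. The only subtlety — and the reason the argument is so short — is the observation that maximality of $B$ inside $P$ promotes the hypothesis $B \cup C \in \building$ to the equality $B \cup C = P$; once this is seen, the containment $B' \ssm B \subseteq P \ssm B \subseteq C$ is automatic and no finer choice of pivot is needed.
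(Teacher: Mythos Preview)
Your proof is correct and follows essentially the same route as the paper: both arguments use the maximality of $B$ in $P$ to force $B \cup C = P$ whenever $\leaving{B}{C}{P}$, and then observe that any $v \in B \ssm B'$ and $v' \in B' \ssm B$ serve as pivots. Your explicit verification that $B \ssm B'$ and $B' \ssm B$ are nonempty is a small addition the paper leaves implicit, and your case split on $C = P$ is harmless but unnecessary (the general argument already covers it).
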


\begin{proof}
Consider~$C \in \building$ such that~$\leaving{B}{C}{P}$.
Since~$B \cap C \ne \varnothing$, we have~$B \cup C \in \building$. Since~$C \subseteq P$ and~$C \not\subseteq B$, we have~$B \subsetneq B \cup C \subseteq P$.
By maximality of~$B$ in~$P$, we obtain that~$B \cup C = P$. 
Hence, $\leaving{B}{C}{P}$ implies~$B' \ssm B \subseteq C$ and similarly~$\leaving{B'}{C'}{P}$ implies~$B \ssm B' \subseteq C'$.
Therefore, choosing any~$v \in B \ssm B'$ and~$v' \in B' \ssm B$, we obtain that $B, B', P, v, v'$ satisfy the conditions of \cref{prop:exchangeablePairsNestedFan}, and thus~$(B, B', P)$ is an exchange frame.
\end{proof}

We call \defn{maximal exchange frames} the exchange frames defined by \cref{prop:maximalExchanges}.
For~$P \in \building$, we will denote by~$\maximalBlocks(P)$ the maximal blocks of~$\building$ strictly contained in~$P$.


\subsection{$\b{g}$-vector dependences}
\label{subsec:gvectorDependences}

We now describe the exchange relations in the $\building$-nested fan~$\nestedFan[\building]$.
We first need to observe that certain building blocks are forced to belong to any two adjacent maximal nested sets with a given frame, generalizing \cref{prop:exchangeablePairsGraphicalNestedFan}\,(ii).

\begin{proposition}
\label{prop:forcedBlocks}
For two adjacent maximal $\building$-nested sets~$\nested$ and~$\nested'$ with~$\nested \ssm \{B\} = \nested' \ssm \{B'\}$ and parent~$P$, all connected components of~$\connectedComponents(B \cap B')$ and of~$\connectedComponents \big(P \ssm (B \cup B'))$ belong to~$\nested \cap \nested'$.
\end{proposition}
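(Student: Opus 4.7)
The plan is, for each $K$ in $\connectedComponents(B \cap B')$ or $\connectedComponents(P \ssm (B \cup B'))$, to show that $\nested \cup \{K\}$ is still a $\building$-nested set; maximality of $\nested$ then forces $K \in \nested$, and the symmetric statement for $\nested'$ follows by swapping the roles of $\nested \leftrightarrow \nested'$ and $B \leftrightarrow B'$. This requires checking pairwise compatibility of $K$ with every block of $\nested$, and the absence of a forbidden union.

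For compatibility, any $C \in \nested$ that strictly contains $P$ also contains $K$, while $C \in \{B, P\}$ is immediate. Otherwise $C \subsetneq P$ with $C \ne B$, so $C \in \nested \cap \nested'$, and by minimality of $P$ as the parent in both $\nested$ and $\nested'$ (via \cref{lem:upperBlocksExchange}), $C$ cannot strictly contain $B$ nor $B'$. Hence $C$ lies in one of the four zones $B \cap B'$, $B \ssm B'$, $B' \ssm B$, or $P \ssm (B \cup B')$; three of these are automatically disjoint from $K$, and within the zone containing $K$, maximality of $K$ inside that zone forces $C \subseteq K$ or $C \cap K = \varnothing$.

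For the forbidden-union condition, I would take $\nested[X] \subseteq \nested \cup \{K\}$ with $K \in \nested[X]$, write $\nested[Y] = \nested[X] \ssm \{K\}$, and assume $D \eqdef K \cup \bigcup \nested[Y] \in \building$. If some $C \in \nested[Y]$ contains $K$, then $D = \bigcup \nested[Y] \in \nested[Y]$ by nestedness of $\nested$. Otherwise, by compatibility, each $C \in \nested[Y]$ is either $\subseteq K$ (absorbed) or disjoint from $K$; collect the latter in $\nested[Y]_2$, and assume $\nested[Y]_2 \ne \varnothing$ (else $D = K \in \nested[X]$). Each $C \in \nested[Y]_2$ cannot contain $P$ (it would then contain $K$), so $C \subsetneq P$ or $C \cap P = \varnothing$. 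If some $C \in \nested[Y]_2$ were disjoint from $P$, then the subfamily of all such $C$ together with $\{P\} \subseteq \nested$ would have union $D \cup P \in \building$ (since $D \cap P \supseteq K$), and nestedness of $\nested$ would force $D \cup P$ to belong to that subfamily --- impossible, as $D \cup P \supsetneq P$ yet is not disjoint from $P$. Thus every $C \in \nested[Y]_2$ lies strictly inside $P$, giving $D \subseteq P$ and $D \supsetneq K$.

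To close the argument, maximality of $K$ in its zone implies $D$ meets $B \ssm K$ or $B' \ssm K$, and since $K \subseteq D$ is either contained in $B \cap B'$ or disjoint from $B \cup B'$, this forces either $\leaving{B}{D}{P}$ or $\leaving{B'}{D}{P}$. \cref{prop:exchangeablePairsNestedFan} then yields $v' \in D$ or $v \in D$ respectively. Since $v' \notin K$ and $v \notin K$ in both cases of the statement, the pivot must sit in some $C \in \nested[Y]_2$ with $C \subsetneq P$; but $\rootset{P}{\nested} = \{v'\}$ (resp.\ $\rootset{P}{\nested'} = \{v\}$) rules out $v' \in C$ (resp.\ $v \in C$) for any $C \in \nested \cap \nested'$ strictly inside $P$, and the exceptional candidate $C = B$ is excluded either because $v' \notin B$ or because $B \in \nested[Y]_2$ would imply $B \subseteq D$, contradicting $D \cap B = \varnothing$ in the relevant sub-situation. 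The main obstacle is this final pivot-tracking step, which requires carefully choosing which of $B$ or $B'$ to apply the exchange-frame condition to, depending both on the position of $K$ and on whether $D$ meets $B$ or $B'$.
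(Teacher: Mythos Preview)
Your proof is correct and follows the same strategy as the paper: verify that $\nested \cup \{K\}$ is again a $\building$-nested set and conclude by maximality, using the pivot characterization of \cref{prop:exchangeablePairsNestedFan} for the decisive step. The paper's execution is organized a bit differently: rather than first establishing laminarity of $K$ with $\nested$ and then splitting $\nested[Y]$ into blocks inside, containing, or disjoint from $K$, it works directly with the reformulated nested-set criterion and handles each case by replacing $K$ in the family $\nested[X]$ by $B$, $B'$, or $P$ (whichever contains $K$ and fits the case) so as to reduce to a subfamily of $\nested$ or $\nested'$; in your final contradiction step, the paper instead observes that the pivot $v'$ must lie in some $C \in \nested[X] \ssm \{K\}$ with $P \subseteq C$, whence $P = C \in \nested[X]$ already absorbs $K$. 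Two minor points on your write-up: the compatibility paragraph silently skips the (trivial) case $C \cap P = \varnothing$; and your closing remark about excluding $C = B$ becomes transparent if you simply note that one should always prefer $\leaving{B}{D}{P}$ whenever $D$ meets $B$ (which it does in particular as soon as $B \in \nested[Y]_2$), so that the pivot $v' = \rootset{P}{\nested}$ handles all of $\nested[Y]_2 \subseteq \nested$ uniformly, while in the residual sub-case $D \cap B = \varnothing$ one has $v \in B$ and hence $v \notin D$ outright.
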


\begin{proof}
Even if we discuss separately the elements of $\connectedComponents(B \cap B')$ from that of~$\connectedComponents(P \ssm (B \cup B'))$, the reader will see a lot of similarities in the arguments below.

We first consider~$K \in \connectedComponents(B \cap B')$ and prove that~$\nested \cup \{K\}$ is a $\building$-nested set, which proves that~$K \in \nested$ by maximality of~$\nested$.
Indeed, let us consider a subset~$\nested[X]$ of~$\nested \cup \{K\}$ whose union~$\bigcup \nested[X]$ is in~$\building$, and prove that~$\bigcup \nested[X]$ is in~$\nested[X]$.
We assume that~$K \in \nested[X]$, since otherwise~$\nested[X] \subseteq \nested$ so that~$\bigcup \nested[X]$ is in~$\nested[X]$ as~$\nested$ is a $\building$-nested set.
Assume now that~$B \in \nested[X]$ and define~$\nested[Y] \eqdef \nested[X] \ssm \{K\}$.
Since~$K \subseteq B \in \nested[X]$, we have~$\bigcup \nested[Y] = \bigcup \nested[X]$ in~$\building$, thus in~$\nested[Y] \subset \nested[X]$ since~$\nested[Y] \subseteq \nested$ and~$\nested$ is a $\building$-nested set.
It remains to consider the case when~$\nested[X] \subseteq (\nested \cap \nested') \cup \{K\}$.
Assume now that~$\bigcup \nested[X] \not\subseteq B$ and define~$\nested[Y] \eqdef \{B\} \cup \nested[X] \ssm \{K\}$.
Since~$K \subseteq B$, we have $\bigcup \nested[Y] = B \cup \bigcup \nested[X]$ which belongs to~$\building$ since~$B$ and~$\bigcup \nested[X]$ both belong to~$\building$ and intersect~$K$.
Hence, $\bigcup \nested[Y]$ is in~$\nested[Y]$ since~$\nested[Y] \subseteq \nested$ and~$\nested$ is a $\building$-nested set.
Note that~$\bigcup \nested[Y] \ne B$ by our assumption that~$\bigcup \nested[X] \not\subseteq B$.
Therefore, $\bigcup \nested[Y]$ is in~$\nested[X]$, and thus~$\bigcup \nested[X] = \bigcup \nested[Y]$ is in~$\nested[X]$.
By symmetry, we obtain that~$\bigcup \nested[X]$ is in~$\nested[X]$ if~$\bigcup \nested[X] \not\subseteq B'$.
Assume finally that~$\bigcup \nested[X] \subseteq B \cap B'$.
Then all the elements of~$\nested[X]$ are in~$B \cap B'$.
Since~$K \in \nested[X]$ is a connected component of~$B \cap B'$ and~$\bigcup \nested[X]$ is in~$\building$, this implies that~$\bigcup \nested[X] = K \in \nested[X]$.

We now consider~$K \in \connectedComponents(P \ssm (B \cup B'))$ and prove that~$\nested \cup \{K\}$ is a $\building$-nested set, which proves that~$K \in \nested$ by maximality of~$\nested$.
Indeed, let us consider a subset~$\nested[X]$ of~$\nested \cup \{K\}$ whose union~$\bigcup \nested[X]$ is in~$\building$, and prove that~$\bigcup \nested[X]$ is in~$\nested[X]$.
We assume that~$K \in \nested[X]$, since otherwise~$\nested[X] \subseteq \nested$ so that~$\bigcup \nested[X]$ is in~$\nested[X]$ as~$\nested$ is a $\building$-nested set.
Assume now that~$\bigcup \nested[X] \not\subseteq P$ and define~$\nested[Y] \eqdef \{P\} \cup \nested[X] \ssm \{K\}$.
Since~$K \subseteq P$, we have $\bigcup \nested[Y] = P \cup \bigcup \nested[X]$ which belongs to~$\building$ since~$P$ and~$\bigcup \nested[X]$ both belong to~$\building$ and intersect~$K$.
Hence, $\bigcup \nested[Y]$ is in~$\nested[Y]$ since~$\nested[Y] \subseteq \nested$ and~$\nested$ is a \mbox{$\building$-nested} set.
Note that~$\bigcup \nested[Y] \ne P$ by our assumption that~$\bigcup \nested[X] \not\subseteq P$.
Therefore, $\bigcup \nested[Y]$ is in~$\nested[X]$, and thus~$\bigcup \nested[X] = \bigcup \nested[Y]$ is in~$\nested[X]$.
Assume now that~${\bigcup \nested[X] \subseteq P \ssm (B \cup B')}$.
Then all elements of~$\nested[X]$ are in~$P \ssm (B \cap B')$.
Since~$K$ is a connected component of~$P \ssm (B \cap B')$ and~$\bigcup \nested[X]$ is in~$\building$, this implies that~${\bigcup \nested[X] = K \in \nested[X]}$.
Assume finally that~$\bigcup \nested[X]$ is contained in~$P$ and intersects~$B$ or~$B'$.
If~$B' \cap \bigcup \nested[X] \ne \varnothing$, then~${\leaving{B'}{\bigcup \nested[X]}{P}}$, thus~$v \eqdef \rootset{P}{\nested} \in \bigcup \nested[X]$ by \cref{prop:exchangeablePairsNestedFan}.
Hence in both cases~$B \cap \bigcup \nested[X] \ne \varnothing$, thus~${\leaving{B}{\bigcup \nested[X]}{P}}$, and thus~$v' \eqdef \rootset{P}{\nested} \in \bigcup \nested[X]$ by \cref{prop:exchangeablePairsNestedFan}.
Therefore, there is~$C \in \nested[X] \ssm \{K\} \subseteq \nested$ containing~$v'$.
Since~$v' = \rootset{P}{\nested}$, we obtain that~$P \subseteq C$, and hence $P=C$ because $C\subseteq \bigcup \nested[X] \subseteq P$.
Thus~$K \subseteq C$ and~$\bigcup \nested[X] = \bigcup \nested[Y]$ where~$\nested[Y] \eqdef \nested[X] \ssm \{K\}$.
Hence, $\bigcup \nested[Y]$ is in~$\nested[Y]$ since~$\nested[Y] \subseteq \nested$ and~$\nested$ is a $\building$-nested set.
We conclude that~$\bigcup \nested[X] = \bigcup \nested[Y]$ is in~$\nested[X]$.

We obtained that all blocks of~$\connectedComponents(B \cap B')$ and of~$\connectedComponents \big(P \ssm (B \cup B'))$ belong to~$\nested$, and thus also to~$\nested'$ by symmetry.
\end{proof}

We are now ready to describe the exchange relations in the $\building$-nested fan.
The main message here is that these relations only depend on the frames of the exchanges, generalizing \cref{prop:exchangeablePairsGraphicalNestedFan}\,(iii).

\begin{proposition}
\label{prop:exchangeRelation}
For two adjacent maximal $\building$-nested sets~$\nested$ and~$\nested'$ with~$\nested \ssm \{B\} = \nested' \ssm \{B'\}$ and parent~$P$, the unique (up to rescaling) linear dependence between the $\b{g}$-vectors of~$\nested \cup \nested'$ is
\begin{equation}
\label{eq:exchangeRelation}
\gvector{B} + \gvector{B'} + \sum_{K \in \connectedComponents(P \ssm (B \cup B'))} \gvector{K} = \gvector{P} + \sum_{K \in \connectedComponents(B \cap B')} \gvector{K}.
\end{equation}
In particular, the $\b{g}$-vector dependence only depends on the exchange frame~$(B,B',P)$.
\end{proposition}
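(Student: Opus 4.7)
The plan is to first verify that every building block appearing in~\eqref{eq:exchangeRelation} actually belongs to~$\nested \cup \nested'$, so that the relation is indeed a linear dependence among the $\b{g}$-vectors of~$\nested \cup \nested'$. The blocks $B$, $B'$, and $P$ are in~$\nested \cup \nested'$ by definition (the first two as the exchanged blocks, and $P$ as the parent, which lies in~$\nested \cap \nested'$ by \cref{lem:upperBlocksExchange}). The blocks in $\connectedComponents(B \cap B')$ and $\connectedComponents(P \ssm (B \cup B'))$ are in $\nested \cap \nested'$ by the preceding \cref{prop:forcedBlocks}.

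Next I would establish the identity already at the level of characteristic vectors in~$\R^\ground$. Because $\connectedComponents(B \cap B')$ partitions $B \cap B'$ and $\connectedComponents(P \ssm (B \cup B'))$ partitions $P \ssm (B \cup B')$, the claimed dependence~\eqref{eq:exchangeRelation} is the image under the projection~$\pi$ of the identity
\[
\sum_{v \in B} \b{e}_v + \sum_{v \in B'} \b{e}_v + \sum_{v \in P \ssm (B \cup B')} \b{e}_v = \sum_{v \in P} \b{e}_v + \sum_{v \in B \cap B'} \b{e}_v,
\]
which itself follows from the elementary inclusion--exclusion identity~$\one_B + \one_{B'} = \one_{B \cup B'} + \one_{B \cap B'}$ together with~$\one_{B \cup B'} + \one_{P \ssm (B \cup B')} = \one_P$ (valid since~${B \cup B' \subseteq P}$). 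Applying~$\pi$ produces~\eqref{eq:exchangeRelation}.

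For the uniqueness up to rescaling, I would invoke the simpliciality of the nested fan (\cref{thm:nestedFan}): the $\b{g}$-vectors of the non-trivial blocks of a maximal $\building$-nested set form a basis of~$\HH$, so $\gvectors{\nested}$ and $\gvectors{\nested'}$ are each linearly independent. Since $\nested \cup \nested'$ contains exactly one more non-trivial block than~$\nested$, the space of linear dependences among its $\b{g}$-vectors is one-dimensional, so~\eqref{eq:exchangeRelation} is the only dependence up to scaling. In particular it depends only on the triple~$(B, B', P)$ and not on the ambient nested sets~$\nested$ and~$\nested'$.

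The substantive content of the argument was already packaged in \cref{prop:forcedBlocks}; the only delicate point here is the bookkeeping of which blocks qualify as non-trivial (i.e.\ lie outside~$\connectedComponents(\building)$) when counting dimensions for the uniqueness argument, but since $\gvector{K} = \b{0}$ for every~$K \in \connectedComponents(\building)$, connected components contribute nothing to the dependence and can safely be ignored.
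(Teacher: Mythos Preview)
Your proof is correct and follows essentially the same approach as the paper: verify the identity at the level of characteristic vectors and project via~$\pi$, invoke \cref{prop:forcedBlocks} to ensure all blocks involved lie in~$\nested \cup \nested'$, and conclude uniqueness from the simpliciality of the nested fan. The paper's version is simply more terse, leaving the inclusion--exclusion computation and the dimension count for uniqueness implicit.
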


\begin{proof}
\cref{eq:exchangeRelation} is a valid linear dependence since it holds at the level of characteristic vectors, and~$\gvector{C} \eqdef \pi \big( \sum_{v \in C} \b{e}_v \big)$ where~$\pi$ is the orthogonal projection from~$\R^\ground$ to~$\HH$.
Since all building blocks involved in \cref{eq:exchangeRelation} belong to~$\nested \cup \nested'$ by \cref{prop:forcedBlocks}, we conclude that \cref{eq:exchangeRelation} is the unique (up to rescaling) linear dependence between the $\b{g}$-vectors of~$\nested \cup \nested'$.
\end{proof}

\begin{remark}
\label{rem:differencesGraphicalExchangeRelation}
For the graphical nested fans studied in \cref{subsec:exchangeableTubes}, the parent of the exchange of~$B$ and~$B'$ is always~${B \cup B'}$ and we recover the $\b{g}$-vector relation of \cref{prop:exchangeablePairsGraphicalNestedFan}\,(iii).
In contrast to the graphical case, for an arbitrary building set,
\begin{itemize}
\item the sum on the left of \cref{eq:exchangeRelation} is empty only when~$P = B \cup B'$,
\item \cref{eq:exchangeRelation} depends on the exchange frame~$(B, B', P)$, not only on the exchangeable building blocks~$B$ and~$B'$.
\end{itemize}
For instance, the $\b{g}$-vector relation of the exchange between the two adjacent maximal $\building_\circ$-nested sets~$\nested_\circ$ and~$\nested'_\circ$ represented in \cref{fig:exmNested}\,(middle) is $\b{g}_{14} + \b{g}_{25} + \b{g}_3 = \b{g}_{12345}$.
Another $\b{g}$-vector relation for the same exchangeable blocks~$B = 14$ and~$B' = 25$ is~$\b{g}_{14} + \b{g}_{25} + \b{g}_6 = \b{g}_{12456}$.
\end{remark}

\begin{remark}
\label{rem:Zelevinsky}
The $\b{g}$-vector dependences were already studied in~\cite{Zelevinsky}.
Namely, our \cref{prop:forcedBlocks,eq:exchangeRelation} are essentially Proposition 4.5 and Equation~(6.6) of~\cite{Zelevinsky}.
Our versions are however more precise since we obtained in \cref{prop:exchangeablePairsNestedFan} a complete characterization of the exchangeable building blocks of~$\building$, which was surprisingly missing in the literature.
\end{remark}

Note that while the $\b{g}$-vector dependence only depends on the exchange frame, different frames may lead to the same $\b{g}$-vector dependence.
In the next two statements, we describe which of the maximal exchange frames lead to the same $\b{g}$-vector dependence.
Remember that we denote by~$\maximalBlocks(P)$ the maximal blocks of~$\building$ strictly contained in a block~$P \in \building$.

\begin{proposition}
\label{prop:mutualizedExchange1}
For an elementary block~$P \in \elementary(\building)$, all exchange frames~$(B, B', P)$ for~$B \ne B'$ in~$\maximalBlocks(P)$ lead to the same $\b{g}$-vector dependence~$\sum_{B \in \maximalBlocks(P)} \gvector{B} = \gvector{P}$.
\end{proposition}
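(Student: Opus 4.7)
The plan is to show that when $P$ is elementary, the general exchange relation from \cref{prop:exchangeRelation} collapses to the simple identity $\sum_{B \in \maximalBlocks(P)} \gvector{B} = \gvector{P}$, independently of the choice of $B \ne B'$ in $\maximalBlocks(P)$.

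First, I would note that $(B, B', P)$ is indeed an exchange frame by \cref{prop:maximalExchanges}, since $B$ and $B'$ are distinct maximal blocks of $\building$ strictly contained in~$P$. Hence \cref{prop:exchangeRelation} applies and gives
\[
\gvector{B} + \gvector{B'} + \sum_{K \in \connectedComponents(P \ssm (B \cup B'))} \gvector{K} = \gvector{P} + \sum_{K \in \connectedComponents(B \cap B')} \gvector{K}.
\]
It then remains to identify the two sums appearing on this line.

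Next I would exploit the definition of elementary blocks via \cref{rem:elementary}: since~$P$ is elementary, the blocks of~$\maximalBlocks(P)$ are pairwise disjoint. In particular $B \cap B' = \varnothing$, so the right-hand sum vanishes. Moreover, because $\building$ contains every singleton~$\{v\}$ with~$v \in P$, each such singleton is contained in some block of~$\maximalBlocks(P)$, so $\maximalBlocks(P)$ actually forms a partition of~$P$. Consequently~$P \ssm (B \cup B')$ is the disjoint union of the blocks in~$\maximalBlocks(P) \ssm \{B, B'\}$, and each such block is maximal in~$\building_{|P \ssm (B \cup B')}$ (any strictly larger block in~$\building$ contained in~$P \ssm (B\cup B')$ would contradict its maximality inside~$P$). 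Therefore $\connectedComponents(P \ssm (B \cup B')) = \maximalBlocks(P) \ssm \{B, B'\}$.

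Substituting these two identifications into the exchange relation yields exactly $\sum_{M \in \maximalBlocks(P)} \gvector{M} = \gvector{P}$, which does not involve $B$ or $B'$, as claimed. The only potentially delicate point is the second identification, and the main obstacle is checking that no block of~$\building$ strictly larger than some~$M \in \maximalBlocks(P)\ssm\{B,B'\}$ can fit inside~$P \ssm (B \cup B')$; but any such block would still lie strictly inside~$P$, contradicting the maximality of~$M$ in~$\maximalBlocks(P)$.
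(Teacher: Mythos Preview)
Your proof is correct and follows essentially the same approach as the paper: both invoke \cref{prop:maximalExchanges} and \cref{prop:exchangeRelation}, use \cref{rem:elementary} to kill the sum over~$\connectedComponents(B\cap B')$, and then identify $\connectedComponents(P\ssm(B\cup B'))$ with~$\maximalBlocks(P)\ssm\{B,B'\}$. The only minor difference is in the justification of this last identification: the paper proves both inclusions directly (arguing that any $K\in\connectedComponents(P\ssm(B\cup B'))$ must lie in~$\maximalBlocks(P)$ because a strictly larger block meeting~$B$ would force $L\cup B=P$, contradicting elementarity), whereas you first observe that~$\maximalBlocks(P)$ partitions~$P$ and then deduce the identification from that; both arguments are short and equivalent in spirit.
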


\begin{proof}
Observe first that~$(B, B', P)$ is indeed an exchange frame by \cref{prop:maximalExchanges}.
We thus apply \cref{prop:exchangeRelation} to describe the corresponding $\b{g}$-vector dependence.
Observe first that the sum on the right of \cref{eq:exchangeRelation} is empty because~$B \cap B' = \varnothing$ by \cref{rem:elementary} since~$P$ is elementary and~${B, B' \in \maximalBlocks(P)}$.
The result thus follows from the observation that~$\connectedComponents(P \ssm (B \cup B')) = \maximalBlocks(P) \ssm \{B, B'\}$ which we prove next.

Let us consider~${K \in \connectedComponents(P \ssm (B \cup B'))}$ and prove that~$K \in \maximalBlocks(P) \ssm \{B, B'\}$.
Consider~$L \in \building$ such that~$K \subseteq L \subsetneq P$.
If~${L \cap B \ne \varnothing}$, then~$L \cup B \in \building$ and~$B \subsetneq L \cup B \subseteq P$, so that~$L \cup B = P$ by maximality of~$B$, contradicting the elementarity of~$P$.
Hence, $L \subseteq P \ssm (B \cup B')$, so that~$K = L$ by maximality of~$K$ in~$P \ssm (B \cup B')$.
We conclude that~$K \in \maximalBlocks(P) \ssm \{B, B'\}$.

Conversely, let us consider~$C \in \maximalBlocks(P) \ssm \{B, B'\}$ and prove that~$C \in \connectedComponents(P \ssm (B \cup B'))$.
Since~$P$ is elementary and~$B, B', C \in \maximalBlocks(P)$, the block $C$ is disjoint from~$B$ and~$B'$ by \cref{rem:elementary}.
Hence, ${C \subseteq P \ssm (B \cup B')}$ and thus~$C \in \connectedComponents(P \ssm (B \cup B'))$ by maximality~of~$C$.
\end{proof}

\begin{proposition}
\label{prop:mutualizedExchange2}
If~$(B_1, B'_1, P)$ and~$(B_2, B'_2, P)$ are two distinct maximal exchange frames with the same $\b{g}$-vector dependence, then $P$ is elementary.
\end{proposition}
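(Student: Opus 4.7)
The plan is to compare the two $\b{g}$-vector dependences head-on, using the explicit formula of \cref{prop:exchangeRelation}. For an exchange frame $(B, B', P)$, this dependence can be encoded as the formal $\pm 1$ combination
\[
R(B, B', P) \eqdef [B] + [B'] + \sum_{K \in \connectedComponents(P \ssm (B \cup B'))} [K] - [P] - \sum_{K \in \connectedComponents(B \cap B')} [K].
\]
A quick check shows that the blocks appearing in $R(B, B', P)$ are pairwise distinct: $P$ strictly contains every other listed block; $B$ and $B'$ cannot belong to $\connectedComponents(B \cap B')$ (as neither is contained in the other) nor to $\connectedComponents(P \ssm (B \cup B'))$ (as they are disjoint from $P \ssm (B \cup B')$); and the two families of components lie in the disjoint subsets $B \cap B'$ and $P \ssm (B \cup B')$. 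Hence the equality $R(B_1, B'_1, P) = R(B_2, B'_2, P)$ is equivalent to the equalities of the positive sets $\{B_i, B'_i\} \cup \connectedComponents(P \ssm (B_i \cup B'_i))$ and of the negative sets $\{P\} \cup \connectedComponents(B_i \cap B'_i)$.

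Next we would invoke the maximality hypothesis: for $B, B' \in \maximalBlocks(P)$ distinct, $B \cap B' \ne \varnothing$ would yield $B \cup B' \in \building$ with $B \subsetneq B \cup B' \subseteq P$, forcing $B \cup B' = P$ by maximality of $B$, and therefore $\connectedComponents(P \ssm (B \cup B')) = \varnothing$. Consequently, for any maximal exchange frame $(B, B', P)$ with $B \cap B' \ne \varnothing$, the positive set of $R(B, B', P)$ reduces to $\{B, B'\}$, a set of size exactly~$2$.

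Now we split into cases. Comparing the negative sets of $R(B_1, B'_1, P)$ and $R(B_2, B'_2, P)$ gives $\connectedComponents(B_1 \cap B'_1) = \connectedComponents(B_2 \cap B'_2)$. If $B_1 \cap B'_1 \ne \varnothing$, then $B_2 \cap B'_2 \ne \varnothing$ as well, and by the previous observation both positive sets are $\{B_1, B'_1\}$ and $\{B_2, B'_2\}$, of size~$2$. Their equality would then force $\{B_1, B'_1\} = \{B_2, B'_2\}$, contradicting the distinctness of the two frames. We therefore conclude $B_1 \cap B'_1 = \varnothing$, and since $B_1 \ne B'_1$ are two disjoint maximal blocks of $\building$ strictly contained in $P$, \cref{rem:elementary} yields that $P$ is elementary.

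The only mildly subtle point is the interpretation of ``same $\b{g}$-vector dependence'' as equality of the formal $\pm 1$ sums $R(B, B', P)$, consistent with the wording of \cref{prop:mutualizedExchange1}; once this is settled, the distinctness of the blocks in $R(B, B', P)$ makes the comparison a routine bookkeeping exercise, and the case $B_1 \cap B'_1 \ne \varnothing$ collapses immediately by a cardinality argument.
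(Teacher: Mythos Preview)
Your proof is correct and follows essentially the same approach as the paper: compare the two formal $\pm1$ combinations (what the paper writes as $\b{n}(B,B',P)$) and conclude via \cref{rem:elementary} once a disjoint pair of maximal subblocks of $P$ is found. The only difference is which disjoint pair is exhibited: the paper observes directly that since the frames are distinct, some $B_2 \notin \{B_1,B'_1\}$ must lie in $\connectedComponents(P\ssm(B_1\cup B'_1))$, yielding the cross-frame disjoint pair $B_1,B_2$, whereas you argue by a cardinality count on the positive sets (after matching the negative sets) that the within-frame pair $B_1,B'_1$ is already disjoint.
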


\begin{proof}
Since the exchange relations given by \cref{eq:exchangeRelation} for the exchange frames~$(B_1, B'_1, P)$ and~$(B_2, B'_2, P)$ coincide, $B_2$ and~$B'_2$ belong to~$\{B_1, B'_1\} \cup \connectedComponents(P \ssm (B_1 \cup B'_1))$.
Since~$(B_1, B'_1, P)$ and~$(B_2, B'_2, P)$ are distinct exchange frames, we can assume for instance that~$B_2$ does not belong to~$\{B_1, B'_1\}$.
Hence, $B_2$ belongs to~$\connectedComponents(P \ssm (B_1 \cup B'_1))$, thus~$B_1 \cap B_2 = \varnothing$, and therefore~$P$ is elementary by \cref{rem:elementary} since it contains two disjoint maximal blocks.
\end{proof}


\subsection{Type cone of nested fans}
\label{subsec:typeConeNestedFans}

As a consequence of \cref{prop:exchangeRelation}, we obtain the following redundant description of the type cone of the nested fan~$\nestedFan[\building]$.

\begin{corollary}
\label{coro:typeConeNestedFan}
For any building set~$\building$, the type cone of the nested fan~$\nestedFan[\building]$ is given by
\[
\typeCone(\nestedFan[\building]) = \set{\b{h} \in \R^{\tubes}}{\begin{array}{l} \b{h}_{B} = 0 \text{ for } B \in \connectedComponents(\building) \text{ and for any exchange frame } (B, B', P) \\ \b{h}_B + \b{h}_{B'} + \sum_{K \in \connectedComponents(P \ssm (B \cup B'))} \b{h}_K > \b{h}_P + \sum_{K \in \connectedComponents(B \cap B')} \b{h}_K \end{array}}.
\]
\end{corollary}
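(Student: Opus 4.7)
The plan is to apply Proposition~\ref{prop:characterizationPolytopalFan} to the nested fan~$\nestedFan[\building]$, using the exchange relation of Proposition~\ref{prop:exchangeRelation} to identify each wall-crossing inequality explicitly. Concretely, Proposition~\ref{prop:characterizationPolytopalFan} states that~$\b{h} \in \typeCone(\nestedFan[\building])$ if and only if, for every pair of adjacent maximal cones~$\R_{\ge 0}\gvectors{\nested}$ and~$\R_{\ge 0}\gvectors{\nested'}$ of~$\nestedFan[\building]$ with~$\nested \ssm \{B\} = \nested' \ssm \{B'\}$, the wall-crossing inequality holds, where the coefficients come from the unique linear dependence among the rays of~$\gvectors{\nested \cup \nested'}$, normalized so that the coefficients of~$\gvector{B}$ and~$\gvector{B'}$ sum to~$2$.

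I would then plug in Proposition~\ref{prop:exchangeRelation}: denoting by~$P$ the parent of the exchange, the dependence reads
\[
\gvector{B} + \gvector{B'} + \sum_{K \in \connectedComponents(P \ssm (B \cup B'))} \gvector{K} - \gvector{P} - \sum_{K \in \connectedComponents(B \cap B')} \gvector{K} = \b{0}.
\]
The coefficients of~$\gvector{B}$ and~$\gvector{B'}$ are both~$1$, so they sum to~$2$ and the normalization of Proposition~\ref{prop:characterizationPolytopalFan} is automatic. Reading off the wall-crossing inequality yields exactly
\[
\b{h}_B + \b{h}_{B'} + \sum_{K \in \connectedComponents(P \ssm (B \cup B'))} \b{h}_K > \b{h}_P + \sum_{K \in \connectedComponents(B \cap B')} \b{h}_K,
\]
which is the inequality associated to the exchange frame~$(B,B',P)$.

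For the equalities~$\b{h}_{B} = 0$ for~$B \in \connectedComponents(\building)$: since~$\gvector{B} = \b{0}$ for such blocks (they vanish under the projection~$\pi$ onto~$\HH$), the linear form~$\dotprod{\gvector{B}}{\b{x}}$ is identically zero on~$\HH$, so these rows of~$\b{G}\b{x} \le \b{h}$ reduce to~$0 \le \b{h}_B$, and equality must hold so that the corresponding inequalities belong to the lineality of the deformation cone (equivalently, they are a convention fixing representatives modulo translations in~$\R^\ground$). Finally, because Proposition~\ref{prop:exchangeRelation} shows that the dependence depends only on the exchange frame~$(B,B',P)$ and not on the particular pair~$\nested,\nested'$, indexing the inequalities by exchange frames rather than by pairs of adjacent maximal nested sets gives exactly the claimed description, possibly redundant (the irredundant version is deferred to Theorem~\ref{thm:extremalExchangeFramesNestedFan}).

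The statement is thus essentially a rewriting of the general wall-crossing description once the dependences are known, and the only real work lies in Propositions~\ref{prop:exchangeablePairsNestedFan} through~\ref{prop:exchangeRelation}; there is no serious obstacle at this stage beyond quoting them and verifying the normalization, which is immediate.
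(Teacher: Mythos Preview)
Your proposal is correct and matches the paper's approach exactly: the paper states this corollary as an immediate consequence of \cref{prop:exchangeRelation} without giving a separate proof, and the details you have filled in (applying \cref{prop:characterizationPolytopalFan}, checking the normalization, and noting that the dependence depends only on the frame) are precisely the intended justification.
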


We denote by~$\b{f}_B$ for~$B \in \building$ the canonical basis of~$\R^{\building}$ and by 
\[
\b{n}(B,B',P) \eqdef \Big( \b{f}_B + \b{f}_{B'} + \sum_{K \in \connectedComponents(P \ssm (B \cup B'))} \b{f}_K \Big) - \Big( \b{f}_P + \sum_{K \in \connectedComponents(B \cap B')} \b{f}_K \Big)
\]
the inner normal vector of the inequality of the type cone~$\typeCone(\nestedFan[\building])$ corresponding to an exchange frame~$(B, B', P)$ of~$\building$. Thus $\b{h} \in \typeCone(\nestedFan[\building])$ if and only if $\dotprod{\b{n}(B, B', P)}{\b{h}} > 0$ for all exchange frames~$(B, B', P)$ of~$\building$.

\pagebreak

\begin{example}
\label{exm:typeConeNestedFan}
Consider the nested fans illustrated in \cref{fig:nestedFans}.
The type cone of the left fan lives in~$\R^8$, has a lineality space of dimension~$3$ and $5$ facet-defining inequalities (given below). In particular, it is simplicial.
Note that as in \cref{fig:graphicalNestedFans}, we express the $\b{g}$-vectors in the basis given by the maximal tubing containing the first three tubes below.

\medskip
\centerline{$
\begin{array}{r|c@{\qquad}c@{\qquad}c@{\qquad}c@{\qquad}c@{\qquad}c@{\qquad}c@{\qquad}c}
\text{blocks} & \raisebox{-.4cm}{\includegraphics[scale=.6]{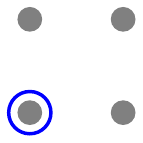}} & \raisebox{-.4cm}{\includegraphics[scale=.6]{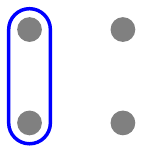}} & \raisebox{-.4cm}{\includegraphics[scale=.6]{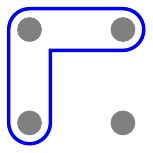}} & \raisebox{-.4cm}{\includegraphics[scale=.6]{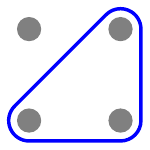}} & \raisebox{-.4cm}{\includegraphics[scale=.6]{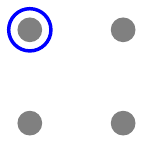}} & \raisebox{-.4cm}{\includegraphics[scale=.6]{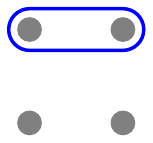}} & \raisebox{-.4cm}{\includegraphics[scale=.6]{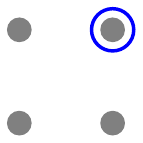}} & \raisebox{-.4cm}{\includegraphics[scale=.6]{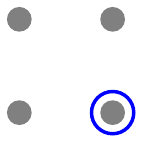}} \\[.6cm]
\text{$\b{g}$-vectors} & \compactVectorT{1}{0}{0} & \compactVectorT{0}{1}{0} & \compactVectorT{0}{0}{1} & \compactVectorT{1}{-1}{0} & \compactVectorT{-1}{1}{0} & \compactVectorT{-1}{0}{1} & \compactVectorT{0}{-1}{1} & \compactVectorT{0}{0}{-1} \\[.6cm]
\text{facet}		& 1 & -1 & 0 & 0 & 1 & 0 & 0 & 0 \\
\text{defining}		& 0 & 0 & 0 & 0 & 1 & -1 & 1 & 0 \\
\text{inequalities}	& 1 & 0 & 0 & -1 & 0 & 0 & 1 & 1 \\
				& 0 & 1 & -1 & 0 & -1 & 1 & 0 & 0 \\
				& -1 & 0 & 1 & 1 & 0 & 0 & -1 & 0 \\[.2cm]
\end{array}
$}

\medskip
\noindent
The type cone of the right fan lives in~$\R^8$, has a lineality space of dimension~$3$ and $7$ facet-defining inequalities (given below). In particular, it is not simplicial.

\[
\begin{array}{r|c@{\qquad}c@{\qquad}c@{\qquad}c@{\qquad}c@{\qquad}c@{\qquad}c@{\qquad}c}
\text{blocks} & \raisebox{-.4cm}{\includegraphics[scale=.6]{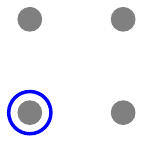}} & \raisebox{-.4cm}{\includegraphics[scale=.6]{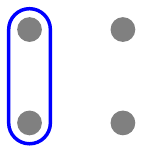}} & \raisebox{-.4cm}{\includegraphics[scale=.6]{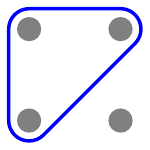}} & \raisebox{-.4cm}{\includegraphics[scale=.6]{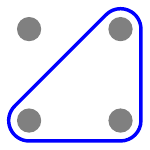}} & \raisebox{-.4cm}{\includegraphics[scale=.6]{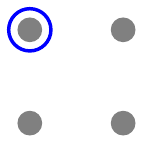}} & \raisebox{-.4cm}{\includegraphics[scale=.6]{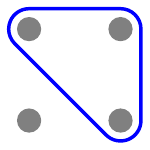}} & \raisebox{-.4cm}{\includegraphics[scale=.6]{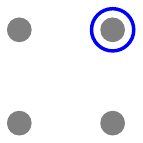}} & \raisebox{-.4cm}{\includegraphics[scale=.6]{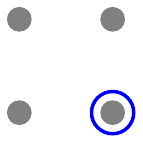}} \\[.6cm]
\text{$\b{g}$-vectors} & \compactVectorT{1}{0}{0} & \compactVectorT{0}{1}{0} & \compactVectorT{0}{0}{1} & \compactVectorT{1}{-1}{0} & \compactVectorT{-1}{1}{0} & \compactVectorT{-1}{0}{0} & \compactVectorT{0}{-1}{1} & \compactVectorT{0}{0}{-1} \\[.6cm]
\text{facet}		& 1 & -1 & 0 & 0 & 1 & 0 & 0 & 0 \\
\text{defining}		& 0 & 1 & -1 & 0 & 0 & 0 & 1 & 0 \\
\text{inequalities}	& 1 & 0 & 0 & -1 & 0 & 0 & 1 & 1 \\
				& 0 & 0 & 0 & 0 & 1 & -1 & 1 & 1 \\
				& -1 & 0 & 1 & 1 & 0 & 0 & -1 & 0 \\
				& 0 & 0 & 1 & 0 & -1 & 1 & -1 & 0 \\
				& 0 & 0 & 0 & 1 & 0 & 1 & -1 & -1 \\[.2cm]
\end{array}
\]
\medskip
\end{example}

\begin{example}
\label{exm:constructionsNestedFan}
We can exploit \cref{coro:typeConeNestedFan} to show that certain height functions belong to the type cone of~$\nestedFan[\building]$ and recover some classical constructions of the nestohedron, generalizing \cref{exm:constructionsGraphicalNestedFan}.

\begin{enumerate}[(i)]
\item Consider the height function~$\b{h} \in \R^\building$ given by~$\b{h}_B \eqdef -3^{|B|}$. Then for any exchange frame~$(B, B', P)$ of~$\building$, we have
\begin{align*}
\qquad\qquad
\dotprod{\b{n}(B, B')}{\b{h}}
& = - 3^{|B|} - 3^{|B'|} - \sum_{K \in \connectedComponents(P \ssm (B \cup B'))} 3^{|K|} + 3^{|P|} + \sum_{K \in \connectedComponents(B \cap B')} 3^{|K|} \\
& \ge - 2 \cdot 3^{|B \cup B'|-1} - 3^{|P \ssm (B \cup B'))|} + 3^{|P|} > 0.
\end{align*}
Therefore, the height function~$\b{h}$ belongs to the type cone~$\typeCone(\nestedFan[\building])$.
The corresponding polytope~$P_\b{h} \eqdef \bigset{\b{x} \in \R^\ground}{\dotprod{\gvector{B}}{\b{x}} \le \b{h}_B \text{ for } B \in \building}$ was constructed in~\cite{Pilaud-removahedra}, generalizing the graph associahedra of~\cite{Devadoss}.
\item Consider the height function~$\b{h} \in \R^\building$ given by~$\b{h}_B \eqdef -|\bigset{C \in \building}{C \subseteq B}|$. Then for any exchange frame~$(B, B', P)$ of~$\building$, we have
\[
\qquad\quad\;
\dotprod{\b{n}(B, B', P)}{\b{h}} = |\set{C \in \building}{C \not\subseteq B, \; C \not\subseteq B' \text{ and } C \not\subseteq P \ssm (B \cup B') \text{ but } C \subseteq P}| > 0
\]
since~$P$ fulfills the conditions on~$C$.
Therefore, the height function~$\b{h}$ belongs to the type cone~$\typeCone(\nestedFan[\building])$.
The corresponding polytope~${P_\b{h} \eqdef \bigset{\b{x} \in \R^\ground}{\dotprod{\gvector{B}}{\b{x}} \le \b{h}_B \text{ for } B \in \building}}$ is the nestohedron constructed by A.~Postnikov's in~\cite{Postnikov}.
\end{enumerate}
\end{example}

Note that many inequalities of \cref{coro:typeConeNestedFan} are redundant.
In the remaining of this section, we describe the facet-defining inequalities of~$\typeCone(\nestedFan[\building])$.
We say that an exchange frame~$(B, B', P)$~is
\begin{itemize}
\item \defn{extremal} if its corresponding inequality in \cref{coro:typeConeNestedFan} defines a facet of~$\typeCone(\nestedFan[\building])$,
\item \defn{maximal} if~$B$ and~$B'$ are both maximal building blocks in~$P$ as in \cref{prop:maximalExchanges}.
\end{itemize}
We can now state our main result on nested complexes, generalizing \cref{thm:extremalExchangeablePairsGraphicalNestedFan}.

\begin{theorem}
\label{thm:extremalExchangeFramesNestedFan}
An exchange frame is extremal if and only if it is maximal.
\end{theorem}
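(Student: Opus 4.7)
The proof will follow the two-direction blueprint of \cref{thm:extremalExchangeablePairsGraphicalNestedFan}. Two features make the nestohedral case more delicate than the graphical one: (i)~different parents give rise to genuinely different normal vectors, so the parent~$P$ must be tracked throughout every argument; and (ii)~by \cref{prop:mutualizedExchange1,prop:mutualizedExchange2}, distinct maximal frames may share the same normal vector precisely when~$P$ is elementary, so what we really prove is extremality of one inequality per such equivalence class.

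For the direction \textbf{extremal $\implies$ maximal}, suppose $(B, B', P)$ is a non-maximal exchange frame; by symmetry we may assume $B \notin \maximalBlocks(P)$ and pick some $\tilde B \in \maximalBlocks(P)$ with $B \subsetneq \tilde B$ (chosen, if possible, so that $B' \not\subseteq \tilde B$). The plan is to exhibit a block $\tilde B' \subsetneq \tilde B$ such that both $(\tilde B, B', P)$ and $(B, \tilde B', \tilde B)$ are exchange frames and
\[
\b{n}(B, B', P) \;=\; \b{n}(\tilde B, B', P) + \b{n}(B, \tilde B', \tilde B),
\]
which will make the inequality of $(B, B', P)$ redundant. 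The exchangeability of $(\tilde B, B', P)$ and $(B, \tilde B', \tilde B)$ will be verified via \cref{prop:exchangeablePairsNestedFan}, and the displayed identity reduces to a bookkeeping check on characteristic vectors, exploiting the partitions $\connectedComponents(\tilde B \cap B') = \connectedComponents(B \cap B') \sqcup \connectedComponents((\tilde B \cap B') \ssm B)$ and the analogous decomposition of $\connectedComponents(P \ssm (B \cup B'))$. If the resulting frame $(B, \tilde B', \tilde B)$ is itself non-maximal, we iterate; termination is guaranteed by induction on~$|P|$ since $|\tilde B| < |P|$.

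For the direction \textbf{maximal $\implies$ extremal}, let $(B, B', P)$ be a maximal exchange frame. Following the graphical scheme, I will construct a witness $\b{w} \eqdef \b{x} + \delta \b{y} + \varepsilon \b{z} \in \R^\building$ with $\dotprod{\b{n}(B,B',P)}{\b{w}} < 0$ and $\dotprod{\b{n}(\tilde B, \tilde B', \tilde P)}{\b{w}} \ge 0$ for every maximal frame $(\tilde B, \tilde B', \tilde P)$ whose normal vector differs from $\b{n}(B,B',P)$. Writing
\[
\alpha(B, B', P) \eqdef \set{C \in \building}{C \subseteq P, \; C \not\subseteq B, \; C \not\subseteq B', \; C \not\subseteq P \ssm (B \cup B')},
\]
the coordinates $\b{x}_C, \b{y}_C, \b{z}_C$ of each $C \in \building$ are defined respectively as minus the number of blocks $D \in \building \ssm \alpha(B,B',P)$ contained in~$C$, minus the number of blocks $D \in \alpha(B,B',P)$ contained in~$C$, and $-1$ when $B \subseteq C$ or $B' \subseteq C$ and $0$ otherwise. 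A straightforward generalization of the graphical inclusion-exclusion yields $\dotprod{\b{n}(\tilde B, \tilde B', \tilde P)}{\b{x}} = |\alpha(\tilde B, \tilde B', \tilde P) \ssm \alpha(B, B', P)|$ and $\dotprod{\b{n}(\tilde B, \tilde B', \tilde P)}{\b{y}} = |\alpha(\tilde B, \tilde B', \tilde P) \cap \alpha(B, B', P)|$, reducing the verification to a lower bound on $\dotprod{\b{n}(\tilde B, \tilde B', \tilde P)}{\b{z}}$.

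The main obstacles are twofold: identifying the block~$\tilde B'$ in the first direction (a natural candidate is a connected component of $(\tilde B \cap B') \ssm B$ belonging to~$\building$, with a more refined choice needed in general) so that both new frames are genuine exchange frames; and establishing the lower bound $\dotprod{\b{n}(\tilde B, \tilde B', \tilde P)}{\b{z}} \ge 0$ in the second direction whenever $\alpha(\tilde B, \tilde B', \tilde P) \subseteq \alpha(B, B', P)$ and $\b{n}(\tilde B, \tilde B', \tilde P) \ne \b{n}(B, B', P)$. In the graphical case the parent was forced to be $\tube \cup \tube'$, tightly constraining how $\tilde \tube$ could interact with $\tube$; here the parent is a free parameter, and the combined maximality of $B, B'$ in $P$ (together with \cref{rem:elementary}) must be leveraged to control when $\b{z}_{\tilde B}$ or $\b{z}_{\tilde B'}$ equals $-1$ without forcing $\b{n}(\tilde B, \tilde B', \tilde P) = \b{n}(B, B', P)$ via \cref{prop:mutualizedExchange1,prop:mutualizedExchange2}. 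Finally, one checks that $\b{x}, \b{y}, \b{z}$ take equal values on any two equivalent maximal frames, so that the witness respects the mutualization and the argument establishes extremality of one facet per equivalence class.
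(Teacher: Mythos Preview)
Your plan for \textbf{maximal $\implies$ extremal} is essentially the paper's proof: same $\alpha(B,B',P)$, same $\b{x},\b{y},\b{z}$, same witness $\b{w}=\b{x}+\delta\b{y}+\varepsilon\b{z}$. (Your claimed equalities for $\dotprod{\b{n}(\tilde B,\tilde B',\tilde P)}{\b{x}}$ and $\dotprod{\b{n}(\tilde B,\tilde B',\tilde P)}{\b{y}}$ are in fact correct; the paper states them as inequalities, which suffices.) The delicate part is indeed the bound on $\dotprod{\b{n}(\tilde B,\tilde B',\tilde P)}{\b{z}}$, where the extra summands from $\connectedComponents(\tilde P\ssm(\tilde B\cup\tilde B'))$ and $\connectedComponents(\tilde B\cap\tilde B')$ must be handled; this is where \cref{rem:elementary} and \cref{prop:mutualizedExchange1,prop:mutualizedExchange2} enter.

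Your plan for \textbf{extremal $\implies$ maximal} has a genuine gap. Your decomposition $\b{n}(B,B',P)=\b{n}(\tilde B,B',P)+\b{n}(B,\tilde B',\tilde B)$ requires $(\tilde B,B',P)$ to be an exchange frame, but if $B'\subseteq\tilde B$ then $B'$ and $\tilde B$ are nested and hence not exchangeable. Your parenthetical ``chosen, if possible, so that $B'\not\subseteq\tilde B$'' does not resolve this: it can be \emph{impossible}. Take $\building=\{1,2,3,4,12,123,1234\}$ and the exchange frame $(B,B',P)=(\{1\},\{2\},1234)$; the unique maximal block strictly below $P$ containing $B$ is $123$, which also contains $B'$, and by symmetry the same happens starting from $B'$. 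The paper treats this as a separate case with a \emph{different} decomposition $\b{n}(B,B',P)=\b{n}(B,B',M)+\b{n}(M,W,P)$, where $M=\tilde B$ and $W$ is a connected component of $P\ssm(B\cup B')$ meeting $P\ssm M$; here the second frame pairs $M$ with a block \emph{outside} $B\cup B'$, not inside it.

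Even when $B'\not\subseteq\tilde B$, your bookkeeping claim $\connectedComponents(\tilde B\cap B')=\connectedComponents(B\cap B')\sqcup\connectedComponents((\tilde B\cap B')\ssm B)$ is not justified: a block $C\subseteq\tilde B\cap B'$ with $C\cap B\ne\varnothing$ and $C\not\subseteq B$ is not excluded by the hypotheses. The paper's remedy is to take $\tilde B'$ to be the connected component $W$ of $\tilde B\cap B'$ \emph{containing the pivot $v'$}; then any such bridging block must contain $v'$ (by \cref{prop:exchangeablePairsNestedFan}) and hence lie in $W$, which yields the correct three-way split $\connectedComponents(\tilde B\cap B')=\connectedComponents((\tilde B\cap B')\ssm(B\cup W))\sqcup\connectedComponents(B\cap B'\ssm W)\sqcup\{W\}$. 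Your ``natural candidate'' (a component of $(\tilde B\cap B')\ssm B$) is not the same object and does not make the identities go through.
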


\begin{proof}
We treat separately the two implications:

\para{Extremal $\implies$ maximal}
Consider an exchange frame~$(B, B', P)$ of~$\building$, and fix pivot vertices~$v,v'$ satisfying the conditions of \cref{prop:exchangeablePairsNestedFan}.
We assume that this frame is not maximal, and prove that it is not extremal by showing that the normal vector~$\b{n}(B, B', P)$ of the corresponding inequality of the type cone~$\typeCone(\nestedFan[\building])$ is a positive linear combination of normal vectors of some other exchange frames.
By symmetry, we can assume that there is~$M \in \building$ such that~$B \subsetneq M \subsetneq P$ and we can assume that~$M$ is maximal for this property.
We decompose the proof into two cases, depending on whether~$B' \subseteq M$ or $B' \not\subseteq M$.

\medskip
\para{Case~1: $B' \subseteq M$}
Observe first that:
\begin{itemize}
\item $(B, B', M)$ is an exchange frame, since $(B, B', P)$ is an exchange frame and ${B \cup B' \subseteq M \subseteq P}$,
\item $(M, W, P)$ is an exchange frame for any connected component~$W$ of~$P \ssm (B \cup B')$ containing a vertex~$w \in P \ssm M$.
Indeed, we just check the conditions of \cref{prop:exchangeablePairsNestedFan} for~$v \in M \ssm W$ and~$w \in W \ssm M$:
	\begin{itemize}
	\item for any~$\leaving{M}{C}{P}$, we have~$w \in P \ssm M \subseteq C$ by maximality of~$M$.
	\item for any~$\leaving{W}{C'}{P}$, we have~$C' \subseteq P$ and~$C' \not\subseteq W$, hence~$C' \cap (B \cup B') \ne \varnothing$ since~$W$ is a connected component of~$P \ssm (B \cup B')$. Assume for instance that~$C' \cap B \ne \varnothing$ (the proof for~$C' \cap B' \ne \varnothing$ is symmetric). Since~$C' \cap W \ne \varnothing$, we obtain that~$\leaving{B}{C'}{P}$ and thus $v' \in C'$ by \cref{prop:exchangeablePairsNestedFan}. We therefore obtain that~$\leaving{B'}{C'}{P}$ and thus~$v \in C'$ by \cref{prop:exchangeablePairsNestedFan} again.
	\end{itemize}
\end{itemize}
We claim that these two exchange frames enable us to write 
\[
\b{n}(B, B', P) = \b{n}(B, B', M) + \b{n}(M, W, P).
\]
Proving this identity amounts to check that
\begin{equation}
\label{eq:connectedComponents1}
\connectedComponents(P \ssm (B \cup B')) \sqcup \connectedComponents(M \cap W) =  \connectedComponents(M \ssm (B \cup B')) \sqcup \connectedComponents(P \ssm (M \cup W)) \sqcup \{W\}.
\end{equation}
For this, we distinguish two subcases, depending on whether or not~$M$ and~$W$ intersect. 

\para{Subcase 1.1: $M \cap W = \varnothing$}
See \cref{fig:extremalImpliesMaximal}\,(left).
First, we claim that either~$C \cap M = \varnothing$ or~$C \subseteq M$ for any~$C \in \building$ with~$C \subseteq P \ssm (B \cup B')$.
Indeed, if~$C \cap M \ne \varnothing$, then~$C \cap W = \varnothing$ since~$M \cap W = \varnothing$ and~$W$ is a connected component of~$P \ssm (B \cup B')$.
Hence~$C \cup M \in \building$ and $B \subsetneq C \cup M \subsetneq P$, and thus~$C \subseteq M$ by maximality of~$M$.
We therefore obtain that
\[
\connectedComponents(P \ssm (B \cup B')) = \connectedComponents(M \ssm (B \cup B')) \sqcup \connectedComponents(P \ssm (M \cup W)) \sqcup \{W\}.
\]
This shows \cref{eq:connectedComponents1} since~$M \cap W = \varnothing$.

\para{Subcase 1.2: $M \cap W \ne \varnothing$}
See \cref{fig:extremalImpliesMaximal}\,(middle).
As~$M \cup W \in \building$ and~$B \subsetneq B \cup W \subseteq P$ and~$W \not\subseteq M$, we have~${P = M \cup W}$ by maximality of~$M$.
Since~$W \in \connectedComponents(P \ssm (B \cup B'))$, we have
\[
\connectedComponents(P \ssm (B \cup B')) = \connectedComponents(P \ssm (B \cup B' \cup W)) \sqcup \{W\} = \connectedComponents(M \ssm (B \cup B' \cup W)) \sqcup \{W\} \\
\]
Moreover, by maximality of~$W$, we obtain that there is no block of~$\building$ contained in~$M \ssm (B \cup B')$ and meeting both~$M \cap W$ and~$M \ssm (B \cup B' \cup W)$.
Hence
\[
\connectedComponents(M \cap W) \sqcup \connectedComponents(M \ssm (B \cup B' \cup W)) = \connectedComponents(M \ssm (B \cup B')).
\]
Combining these two identities proves \cref{eq:connectedComponents1} since~$P = M \cup W$.

\begin{figure}[t]
	\centerline{
		\begin{tabular}{c@{\qquad}c@{\qquad}c}
			\begin{tikzpicture}[every path/.style={draw, rounded corners=1ex, thick}]
				\draw[green] (0, 2.5) rectangle (4, 0) node[pos=0, xshift=1.5ex, yshift=-1.5ex] {$P$};
				\draw[purple] (.1, .1) rectangle (2.9, 1.9) node[pos=0, xshift=1.6ex, yshift=1.5ex] {$M$};
				\draw[red] (.2, 1.8) rectangle (1.7, .8) node[pos=0, xshift=1.5ex, yshift=-1.5ex] {$B$};
				\draw[blue] (1.3, 1.2) rectangle (2.8, .2) node[pos=1, xshift=-1.5ex, yshift=1.5ex] {$B'$};
				\draw[orange] (3, 1.3) rectangle (3.9, 2.4) node[pos=1, xshift=-1.6ex, yshift=-1.5ex] {$W$};
				\node[label={[label distance=-1.7ex] 180:{\red $v$}}] at (1.2, 1.3) {\red $\star$};
				\node[label={[label distance=-1.5ex] 0:{\blue $v\smash{'}$}}] at (1.8, .7) {\blue $\star$};
				\node[label={[label distance=-1.7ex] 180:{\orange $w$}}] at (3.6, 1.5) {\orange $\star$};
			\end{tikzpicture}
			&
			\begin{tikzpicture}[every path/.style={draw, rounded corners=1ex, thick}]
				\draw[green] (0, 2) -- (0, 0) -- (3, 0) -- (3, 1.2) -- (4, 1.2) -- (4, 2.5) -- (2.1, 2.5) -- (2.1, 2) -- cycle;  \node[green, right] at (0, 2.2) {$P = M \cup W$};
				\draw[purple] (.1, .1) rectangle (2.9, 1.9) node[pos=0, xshift=1.6ex, yshift=1.5ex] {$M$};
				\draw[red] (.2, 1.8) rectangle (1.7, .8) node[pos=0, xshift=1.5ex, yshift=-1.5ex] {$B$};
				\draw[blue] (1.3, 1.2) rectangle (2.8, .2) node[pos=1, xshift=-1.5ex, yshift=1.5ex] {$B'$};
				\draw[orange] (2.2, 1.3) rectangle (3.9, 2.4) node[pos=1, xshift=-1.6ex, yshift=-1.5ex] {$W$};
				\node[label={[label distance=-1.7ex] 180:{\red $v$}}] at (1.2, 1.3) {\red $\star$};
				\node[label={[label distance=-1.5ex] 0:{\blue $v\smash{'}$}}] at (1.8, .7) {\blue $\star$};
				\node[label={[label distance=-1.7ex] 180:{\orange $w$}}] at (3.6, 1.5) {\orange $\star$};
			\end{tikzpicture}
			&
			\begin{tikzpicture}[every path/.style={draw, rounded corners=1ex, thick}]
				\draw[green] (0, 2) -- (0, 0) -- (4, 0) -- (4, 1.3) -- (3, 1.3) -- (3, 2) -- cycle;  \node[green, right] at (0, 2.2) {$P = M \cup B'$};
				\draw[purple] (.1, .1) rectangle (2.9, 1.9) node[pos=0, xshift=1.6ex, yshift=1.5ex] {$M$};
				\draw[red] (.2, 1.8) rectangle (1.7, .8) node[pos=0, xshift=1.5ex, yshift=-1.5ex] {$B$};
				\draw[blue] (1.3, 1.2) rectangle (3.9, .2) node[pos=1, xshift=-1.5ex, yshift=1.5ex] {$B'$};
				\draw[orange] (1.5, 1) rectangle (2.8, .4) node[pos=1, xshift=-1.5ex, yshift=2ex] {$W$};
				\node[label={[label distance=-1.7ex] 180:{\red $v$}}] at (1.2, 1.3) {\red $\star$};
				\node[label={[label distance=-1.5ex] 0:{\blue $v\smash{'}$}}] at (1.8, .7) {\blue $\star$};
				\node[label={[label distance=-1.7ex] 180:{\blue $w$}}] at (3.6, 1) {\blue $\star$};
			\end{tikzpicture}
			\\[.1cm]
			Case 1.1
			&
			Case 1.2
			&
			Case 2
		\end{tabular}
		}
	\caption{Illustrations for the case analysis of the proof of \cref{thm:extremalExchangeFramesNestedFan}.}
	\label{fig:extremalImpliesMaximal}
\end{figure}
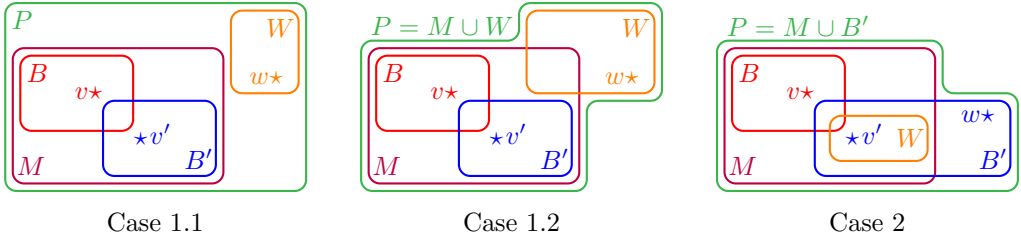

\medskip
\para{Case~2: $B' \not\subseteq M$}
See \cref{fig:extremalImpliesMaximal}\,(right).
Observe that:
\begin{itemize}
\item $(M, B', P)$ is an exchange frame. Indeed, we just check the conditions of \cref{prop:exchangeablePairsNestedFan} for~$v \in M \ssm B'$ and an arbitrary~$w \in B' \ssm M$:
	\begin{itemize}
	\item for any~$\leaving{M}{C}{P}$, we have~$w \in P \ssm M \subseteq C$ by maximality of~$M$.
	\item for any~$\leaving{B'}{C'}{P}$, we have~$v \in C'$ by \cref{prop:exchangeablePairsNestedFan}.
	\end{itemize}
\item $(B, W, M)$ is an exchange frame for the connected component~$W$ of~$M \cap B'$ containing~$v'$. Indeed, we just check the conditions of \cref{prop:exchangeablePairsNestedFan} for~$v \in B \ssm W$ and~$v' \in W \ssm B$:
	\begin{itemize}
	\item for any~$\leaving{B}{C}{M}$, we have~$\leaving{B}{C}{P}$ and thus $v' \in C$ by \cref{prop:exchangeablePairsNestedFan}.
	\item for any~$\leaving{W}{C'}{M}$, we have~$\leaving{B'}{C'}{P}$ and thus~$v \in C'$ by \cref{prop:exchangeablePairsNestedFan}.
	\end{itemize}
\end{itemize}
We claim that these two exchange frames enable to write 
\[
\b{n}(B, B', P) = \b{n}(M, B', P) + \b{n}(B, W, M).
\]
Proving this identity amounts to check that
\begin{equation}
\label{eq:connectedComponents2}
\connectedComponents(P \ssm (B \cup B')) \sqcup \connectedComponents(M \cap B')  \sqcup \connectedComponents(B \cap W) =  \connectedComponents(B \cap B') \sqcup \connectedComponents(P \ssm (M \cup B')) \sqcup \connectedComponents(M \ssm (B \cup W)) \sqcup \{W\}.
\end{equation}
To prove this, we observe that:
\begin{itemize}
\item Since~$W$ contains~$v'$, \cref{prop:exchangeablePairsNestedFan} ensures that there is no block of~$\building$ contained in~$M \cap B'$ and meeting both~$B$ and~$B' \ssm (B \cup W)$. Since~$W \in \connectedComponents(M \cap B')$, we thus obtain
\[
\connectedComponents(M \cap B') = \connectedComponents((M \cap B')  \ssm (B \cup W)) \sqcup \connectedComponents(B \cap B' \ssm W) \sqcup \{W\}.
\]
\item As~$W \in \connectedComponents(M \cap B')$, there is no block of~$\building$ contained in~$B \cap B'$ and meeting both~$B \cap W$ and~$B \cap B' \ssm W$, hence
\[
\connectedComponents(B \cap W) \sqcup \connectedComponents(B \cap B' \ssm W) = \connectedComponents(B \cap B').
\]
\item There is no block of~$\building$ contained in~$M \ssm (B \cup W)$ and meeting both~$M \ssm (B \cup B')$ and~$(M \cap B') \ssm (B \cup W)$ (such a block~$C$ would satisfy~$\leaving{B'}{C}{P}$ and~$v \notin C$, contradicting \cref{prop:exchangeablePairsNestedFan}). Hence
\[
\connectedComponents(M \ssm (B \cup B')) \sqcup \connectedComponents((M \cap B')  \ssm (B \cup W)) = \connectedComponents(M \ssm (B \cup W)).
\]
\end{itemize}
Combining these three identities proves~\eqref{eq:connectedComponents2} since $P = M \cup B'$ by maximality of~$M$.


\para{Maximal $\implies$ extremal}
Let~$(B, B', P)$ be a maximal exchange frame.
To prove that $(B, B', P)$ is extremal, we will construct a vector $\b{w} \in \R^{\building}$ such that ${\dotprod{\b{n}(B, B', P)}{\b{w}} < 0}$, but ${\dotprod{\b{n}(\tilde B, \tilde B', \tilde P)}{\b{w}} > 0}$ for any maximal exchange frame~$(\tilde B, \tilde B', \tilde P)$ with~$\b{n}(B, B', P) \ne \b{n}(\tilde B, \tilde B', \tilde P)$.
This will show that the inequality induced by~$(B, B', P)$ is not redundant.
Remember from \cref{prop:mutualizedExchange1,prop:mutualizedExchange2} that, as~$(B, B', P)$ and~$(\tilde B, \tilde B', \tilde P)$ are maximal exchange frames, $\b{n}(B, B', P) \ne \b{n}(\tilde B, \tilde B', \tilde P)$ if and only if~$P \ne \tilde P$, or~$P = \tilde P$ is not an elementary block.

Define $\alpha(B, B', P) \eqdef \set{C \in \building}{C \not\subseteq B, \; C \not\subseteq B' \text{ and } C \not\subseteq P \ssm (B \cup B') \text{ but } C \subseteq P}$.
Define three vectors~$\b{x}, \b{y}, \b{z} \in \R^{\building}$ by
\begin{align*}
\b{x}_C & \eqdef -|\bigset{D \in \building \ssm \alpha(B, B', P)}{D \subseteq C}|,
\\
\b{y}_C & \eqdef -|\bigset{D \in \alpha(B, B', P)}{D \subseteq C}|,
\\
\b{z}_C & \eqdef \begin{cases}
	-1 & \text{if } B \subseteq C \text{ or } B' \subseteq C, \\
	0 & \text{otherwise},
\end{cases}
\end{align*}
for each bock~$C \in \building$.

We will prove below that their scalar products with~$\b{n}(\tilde B, \tilde B', \tilde P)$ for any maximal exchange frame $(\tilde B, \tilde B', \tilde P)$ satisfy the following inequalities
\[
\renewcommand{\arraystretch}{1.3}
\begin{array}{l|ccc}
& \dotprod{\b{n}(\tilde B, \tilde B', \tilde P)}{\b{x}} & \dotprod{\b{n}(\tilde B, \tilde B', \tilde P)}{\b{y}} & \dotprod{\b{n}(\tilde B, \tilde B', \tilde P)}{\b{z}} \\
\hline
\text{if } \b{n}(B, B', P) = \b{n}(\tilde B, \tilde B', \tilde P) & = 0 & = |\alpha(B, B', P)| & = -1 \\
\text{if } \alpha(\tilde B, \tilde B', \tilde P) \not\subseteq \alpha(B, B', P) & \ge 1 & \ge 0 & \ge -1 \\
\text{otherwise} & \ge 0 & \ge 1 & \ge 0
\end{array}
\renewcommand{\arraystretch}{1}
\]
It immediately follows from this table that the vector $\b{w} \eqdef \b{x} + \delta \b{y} + \varepsilon \b{z}$ fulfills the desired properties for any~$\delta, \varepsilon$ such that $0 < \delta \cdot |\alpha(B, B', P)| < \varepsilon < 1$.

The equalities of the table are immediate.
To prove the inequalities, observe that for any maximal exchange frame~$(\tilde B, \tilde B', \tilde P)$,
\begin{itemize}
\item $\dotprod{\b{n}(\tilde B, \tilde B', \tilde P)}{\b{x}} \ge |\alpha(\tilde B, \tilde B', \tilde P) \ssm \alpha(B, B', P)|$,
\item $\dotprod{\b{n}(\tilde B, \tilde B', \tilde P)}{\b{y}} \ge |\alpha(\tilde B, \tilde B', \tilde P) \cap \alpha(B, B', P)|$,
\item $\dotprod{\b{n}(\tilde B, \tilde B', \tilde P)}{\b{z}} \ge -1$. Indeed, observe that~$\b{z}_{\tilde P} = -1$ as soon as~$\b{z}_{\tilde K} = -1$ for some ${\tilde K \in \{\tilde B, \tilde B'\} \sqcup \connectedComponents(\tilde P \ssm (\tilde B \cup \tilde B'))}$. This already implies that~$\dotprod{\b{n}(\tilde B, \tilde B', \tilde P)}{\b{z}} \ge -1$ except if~$\b{z}_{\tilde K} = \b{z}_{\tilde K'} = \b{z}_{\tilde K''} = -1$ for three distinct ${\tilde K, \tilde K', \tilde K'' \in \{\tilde B, \tilde B'\} \sqcup \connectedComponents(\tilde P \ssm (\tilde B \cup \tilde B'))}$. But since~$\tilde B$ and~$\tilde B'$ are the only intersecting blocks among~$\{\tilde B, \tilde B'\} \sqcup \connectedComponents(\tilde P \ssm (\tilde B \cup \tilde B'))$, the only option (up to permutation) is that~$\tilde K = \tilde B$ and~$\tilde K' = \tilde B'$ both contain~$B$ (resp.~$B'$), $K''$ contains~$B'$ (resp.~$B$), while none of the other blocks of~$\{\tilde B, \tilde B'\} \sqcup \connectedComponents(\tilde P \ssm (\tilde B \cup \tilde B'))$ meets~${B \cup B'}$. This implies that~$\b{z}_{\tilde P} = -1 = \b{z}_{L}$ for some~$L \in \connectedComponents(\tilde B \cap \tilde B')$, and thus~$\dotprod{\b{n}(\tilde B, \tilde B', \tilde P)}{\b{z}} \ge -1$.
\item $\dotprod{\b{n}(\tilde B, \tilde B', \tilde P)}{\b{z}} \ge 0$ when~$\b{n}(B, B', P) \ne \b{n}(\tilde B, \tilde B', \tilde P)$ but $\alpha(\tilde B, \tilde B', \tilde P) \subseteq \alpha(B, B', P)$. Indeed, $\alpha(\tilde B, \tilde B', \tilde P) \subseteq \alpha(B, B', P)$ implies that~$\tilde P \subseteq P$. Let ${\tilde K \in \{\tilde B, \tilde B'\} \sqcup \connectedComponents(\tilde P \ssm (\tilde B \cup \tilde B'))}$. If~$B \subseteq \tilde K$, then~$B \subseteq \tilde K \subsetneq \tilde P \subseteq P$ which implies that~$B =  \tilde K$ and~$P = \tilde P$ by maximality of~$B$ in~$P$. Similarly, $B' \subseteq \tilde K$ implies~$B' = \tilde K$ and~$P = \tilde P$. Hence, if~${\b{z}_{\tilde K} = -1}$, then by definition~$B \subseteq \tilde K$ or~$B' \subseteq \tilde K$, which implies that~$\tilde K \in \{B, B'\}$. Hence, if~${\tilde K \ne \tilde K'}$ are two distinct blocks of~$\{\tilde B, \tilde B'\} \sqcup \connectedComponents(\tilde P \ssm (\tilde B \cup \tilde B'))$  such that $\b{z}_{\tilde K} = -1 = \b{z}_{\tilde K'}$, then~$(B, B', P) = (\tilde K, \tilde K', \tilde P)$ and moreover either~$\{B, B'\} = \{\tilde K, \tilde K'\}$, or $\tilde K \cap \tilde K' = \varnothing$, so that~$P$ is elementary by \cref{rem:elementary} since it has two disjoint maximal blocks. In both cases, we obtain~${\b{n}(B, B', P) = \b{n}(\tilde B, \tilde B', \tilde P)}$ by \cref{prop:mutualizedExchange2}, contradicting our assumption. Therefore, at most one of~$\b{z}_{\tilde K}$ for ${\tilde K \in \{\tilde B, \tilde B'\} \sqcup \connectedComponents(\tilde P \ssm (\tilde B \cup \tilde B'))}$ equals to~$-1$, and if exactly one does, then~$\b{z}_{\tilde P} = -1$. We conclude that~$\dotprod{\b{n}(\tilde B, \tilde B', \tilde P)}{\b{z}} \ge 0$.
\qedhere
\end{itemize}
\end{proof}

We derive from \cref{thm:extremalExchangeFramesNestedFan} the facet description of the type cone~$\typeCone(\nestedFan[\building])$.
Remember that we denote by~$\maximalBlocks(P)$ the maximal blocks of~$\building$ strictly contained in a block~$P \in \building$.

\begin{corollary}
\label{coro:facetDescriptionTypeConeNestedFan}
The inequalities
\begin{itemize}
\item $\sum_{B \in\maximalBlocks(P)} \b{h}_{B} > \b{h}_P$ for any elementary block~$P$ of~$\building$,
\item $\b{h}_B + \b{h}_{B'} + \sum_{K \in \connectedComponents(P \ssm (B \cup B'))} \b{h}_K > \b{h}_P + \sum_{K \in \connectedComponents(B \cap B')} \b{h}_K$ for any block~$P$ of~$\building$ neither singleton nor elementary, and any two blocks~$B \ne B'$ in~$\maximalBlocks(P)$,
\end{itemize}
provide an irredundant facet description of the type cone~$\typeCone(\nestedFan[\building])$.
\end{corollary}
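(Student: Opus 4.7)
The plan is to combine the main technical theorem \cref{thm:extremalExchangeFramesNestedFan} with the grouping results of \cref{prop:mutualizedExchange1,prop:mutualizedExchange2}, and then verify that normal vectors associated to different parents never coincide. By \cref{thm:extremalExchangeFramesNestedFan} together with the parametrization of \cref{coro:typeConeNestedFan}, the facets of $\typeCone(\nestedFan[\building])$ are in bijection with the distinct normal vectors $\b{n}(B,B',P)$ coming from maximal exchange frames. By \cref{prop:maximalExchanges}, a maximal exchange frame is precisely a triple $(B,B',P)$ with $B \ne B'$ both in $\maximalBlocks(P)$; in particular $P$ is never a singleton, which accounts for the exclusion of singletons from the corollary's second item.

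Next I would split the enumeration according to whether $P$ is elementary. When $P$ is elementary, \cref{rem:elementary} yields $B \cap B' = \varnothing$, so the right-hand sum vanishes; moreover, as shown inside the proof of \cref{prop:mutualizedExchange1}, $\connectedComponents(P \ssm (B \cup B')) = \maximalBlocks(P) \ssm \{B,B'\}$. Collecting terms, the inequality from \cref{coro:typeConeNestedFan} collapses to $\sum_{B \in \maximalBlocks(P)} \b{h}_B > \b{h}_P$, independent of the chosen pair $\{B,B'\}$, producing exactly the first family. When $P$ is neither a singleton nor elementary, the inequality appears in the displayed form of the second family, and \cref{prop:mutualizedExchange2} guarantees that two distinct unordered pairs $\{B,B'\} \subseteq \maximalBlocks(P)$ yield two distinct normal vectors, hence two genuinely different facets.

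The last (and arguably only delicate) point is that inequalities attached to different parents can never coincide. For this I would observe that the negative support of $\b{n}(B,B',P)$ is precisely $\{P\} \cup \connectedComponents(B \cap B')$, with no cancellation against the positive support $\{B,B'\} \cup \connectedComponents(P \ssm (B \cup B'))$; a short case check using $B,B' \in \maximalBlocks(P)$ with $B \ne B'$ rules out any overlap (for example, any $K \in \connectedComponents(P \ssm (B \cup B'))$ is disjoint from $B \cup B'$ and strictly contained in $P$, while any $K \in \connectedComponents(B \cap B')$ is contained in $B \subsetneq P$). Since every element of $\connectedComponents(B \cap B')$ sits inside $B \subsetneq P$, the parent $P$ is recovered as the unique inclusion-maximal element of the negative support of $\b{n}(B,B',P)$. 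This establishes irredundancy across parents, and combined with the case analysis above completes the proof.
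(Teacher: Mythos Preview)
Your proposal is correct and follows exactly the route the paper intends: the paper states \cref{coro:facetDescriptionTypeConeNestedFan} without proof, as an immediate consequence of \cref{thm:extremalExchangeFramesNestedFan} combined with \cref{prop:mutualizedExchange1,prop:mutualizedExchange2}. Your explicit check that distinct parents~$P$ yield distinct normal vectors (via the observation that~$P$ is the unique inclusion-maximal element of the negative support of~$\b{n}(B,B',P)$, after verifying that the positive and negative supports do not overlap) is a detail the paper leaves implicit but which is indeed needed to justify the word ``irredundant'', and your argument for it is clean and correct.
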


\begin{corollary}
\label{coro:numberFacetsTypeConeNestedFan}
The number of facets of the the type cone~$\typeCone(\nestedFan[\building])$ is
\[
|\elementary(\building)| + \sum_P \binom{\maximalBlocks(P)}{2}
\]
where the sum runs over all blocks~$P$ of~$\building$ which are neither singletons nor elementary blocks.
\end{corollary}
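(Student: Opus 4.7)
The plan is essentially a direct counting argument applied to the irredundant facet description given by Corollary~\ref{coro:facetDescriptionTypeConeNestedFan}. That corollary partitions the facets of $\typeCone(\nestedFan[\building])$ into two disjoint families, each parameterized by combinatorial data of $\building$, so it remains only to tally each family and confirm no double counting occurs.

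First I would count the contributions of the first family in Corollary~\ref{coro:facetDescriptionTypeConeNestedFan}: there is exactly one inequality $\sum_{B \in \maximalBlocks(P)} \b{h}_B > \b{h}_P$ per elementary block $P \in \elementary(\building)$, contributing $|\elementary(\building)|$ facets. Then I would count the second family: for each block $P \in \building$ that is neither a singleton nor elementary, we get one inequality per unordered pair $\{B, B'\}$ of distinct elements of $\maximalBlocks(P)$, contributing $\binom{|\maximalBlocks(P)|}{2}$ facets for each such $P$. Summing over the two families yields the announced formula.

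The only subtlety is to check that inequalities coming from pairs $\{B, B'\}, \{B_1, B_1'\}$ with different parents $P \ne P_1$ (or with the same non-elementary parent) are genuinely distinct, and that no inequality of the second family coincides with one of the first family; but this is precisely what the irredundancy part of Corollary~\ref{coro:facetDescriptionTypeConeNestedFan} encodes, which in turn rests on Propositions~\ref{prop:mutualizedExchange1} and~\ref{prop:mutualizedExchange2} concerning when two maximal exchange frames yield the same $\b{g}$-vector dependence. Since all the hard work has been absorbed into Corollary~\ref{coro:facetDescriptionTypeConeNestedFan}, the proof reduces to this short bookkeeping and there is no real obstacle.
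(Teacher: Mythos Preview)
Your proposal is correct and matches the paper's approach: the paper states this corollary immediately after Corollary~\ref{coro:facetDescriptionTypeConeNestedFan} with no proof, treating it as a direct count of the irredundant inequalities listed there. Your remark that the absence of double counting is precisely the irredundancy assertion (ultimately resting on Propositions~\ref{prop:mutualizedExchange1} and~\ref{prop:mutualizedExchange2}) is exactly right.
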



\pagebreak
\subsection{Simplicial type cones and interval building sets}
\label{subsec:simplicialTypeConeNestedFans}

To conclude the paper, we characterize the building sets~$\building$ whose nested fan~$\nestedFan[\building]$ has a simplicial type cone and study in more details a specific family of such building sets.

\begin{proposition}
\label{prop:simplicialTypeConeNestedFan}
The type cone~$\typeCone(\nestedFan[\building])$ is simplicial if and only if all blocks of~$\building$ with at least three distinct maximal strict subblocks are elementary.
\end{proposition}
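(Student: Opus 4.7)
The plan is to convert simpliciality into a direct counting identity and then reduce it to an elementary binomial inequality. The nested fan~$\nestedFan[\building]$ is essential inside the $n$-dimensional subspace~$\HH$ with $n = |V| - |\connectedComponents(\building)|$, and it has $N = |\building| - |\connectedComponents(\building)|$ rays (one per non-component block, since $\gvector{K} = \b{0}$ for every $K \in \connectedComponents(\building)$). Hence $\typeCone(\nestedFan[\building])$ is simplicial exactly when it has $N - n = |\building| - |V|$ facets, which is the number of non-singleton blocks of~$\building$.

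On the other hand, \cref{coro:numberFacetsTypeConeNestedFan} counts these facets as $|\elementary(\building)| + \sum_{P} \binom{|\maximalBlocks(P)|}{2}$, where $P$ ranges over blocks of~$\building$ which are neither singletons nor elementary. Splitting the set of non-singleton blocks of $\building$ into its elementary and non-elementary parts, simpliciality becomes the identity
\[
\sum_{P} \binom{|\maximalBlocks(P)|}{2} = \big|\bigset{P \in \building}{P \text{ non-singleton and non-elementary}}\big|,
\]
the sum on the left running over the same indexing set as in \cref{coro:numberFacetsTypeConeNestedFan}.

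I would then observe the key fact that $|\maximalBlocks(P)| \ge 2$ for every non-singleton block~$P$: indeed, if $M$ were the unique maximal strict subblock of~$P$, then for any $v \in P \ssm M$ the singleton~$\{v\}$ would lie in $\maximalBlocks(P)$, contradicting uniqueness. Consequently each term $\binom{|\maximalBlocks(P)|}{2}$ of the sum is at least~$1$, with equality if and only if $|\maximalBlocks(P)| = 2$. Therefore the identity above holds precisely when every non-singleton non-elementary block has exactly two maximal strict subblocks, or equivalently every block with at least three maximal strict subblocks is elementary. This is exactly the claimed equivalence; the only non-bookkeeping step is the lower bound $|\maximalBlocks(P)| \ge 2$, which is the one mild conceptual obstacle but is settled by the short singleton argument above.
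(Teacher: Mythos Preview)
Your proof is correct and follows exactly the same approach as the paper: both reduce simpliciality to the facet count $|\building| - |V|$ and invoke \cref{coro:numberFacetsTypeConeNestedFan}. The paper simply asserts that the statement ``immediately follows'' from the corollary, whereas you spell out the arithmetic and supply the one missing observation---that $|\maximalBlocks(P)| \ge 2$ for every non-singleton block---which the paper takes for granted.
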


\begin{proof}
Recall that the nested fan~$\nestedFan[\building]$ has dimension~$|\ground| - |\connectedComponents(\building)|$ and has~$|\building| - |\connectedComponents(\building)|$ rays.
Hence, the type cone~$\typeCone(\nestedFan[\building])$ is simplicial if and only if it has~$|\building| - |\ground|$ facets.
The statement thus immediately follows from \cref{coro:numberFacetsTypeConeNestedFan}.
\end{proof}

We conclude the paper by focussing on the following special family of building sets which fulfills \cref{prop:simplicialTypeConeNestedFan} and is illustrated~in~\cref{fig:intervalNestedFans}.

\begin{figure}
	\capstart
	\centerline{\includegraphics[scale=.55]{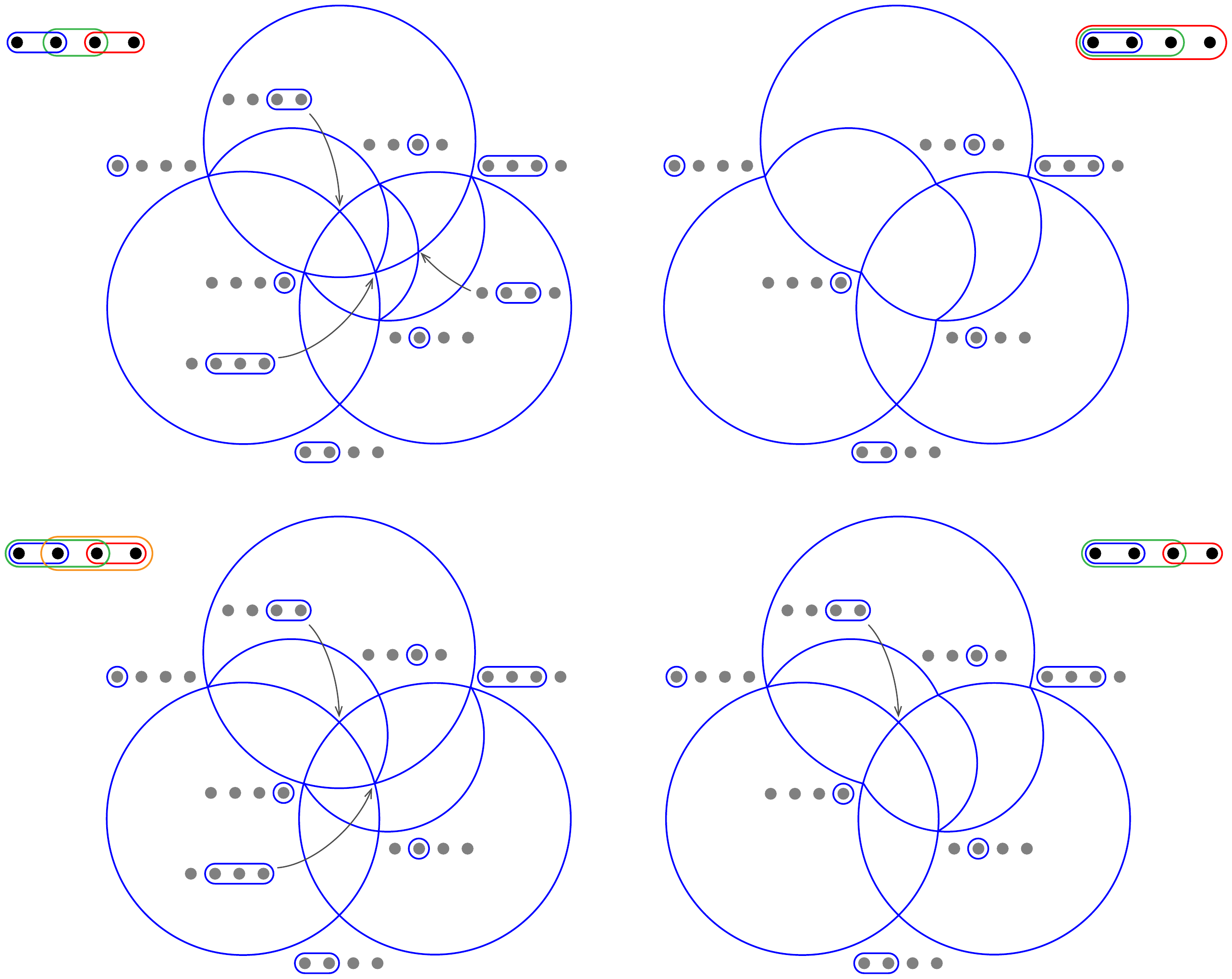}}
	\caption{Four interval nested fans. The top left one is the sylvester fan, the top right one is the Pitman-Stanley fan, the bottom left one is the freehedron fan, and only the top right one is a fertilotope fan. The rays are labeled by the corresponding blocks. As the fans are $3$-dimensional, we intersect them with the sphere and stereographically project them from the direction~$(-1,-1,-1)$.}
	\label{fig:intervalNestedFans}
\end{figure}

\begin{definition}
\label{def:intervalBuildingSet}
An \defn{interval building set} is a building set on~$[n] \eqdef \{1, \dots, n\}$ whose blocks are some intervals.
We call \defn{interval nested fan} and \defn{interval nestohedron} the nested fan and nestohedron of an interval building set.
\end{definition}

\begin{example}
Particularly relevant examples of interval nestohedra include:
\begin{itemize}
\item the classical associahedron of~\cite{ShniderSternberg,Loday} for the building set with all intervals of~$[n]$,
\item the Pitman-Stanley polytope of~\cite{PitmanStanley} for the building set with all singletons~$\{i\}$ and all initial intervals~$[i]$~for~${i \in [n]}$,
\item the freehedron of~\cite{Saneblidze-freehedron} for the building set with all singletons~$\{i\}$, all initial intervals~$[i]$~for~${i \in [n]}$, and all final intervals~$[n] \ssm [i]$~for~${i \in [n-1]}$,
\item the fertilotopes of~\cite{Defant-fertilitopes} for the binary building sets defined as the interval building sets where any two intervals are either nested or disjoint.
\end{itemize}
Note that, by definition, any interval nested fan coarsens the associahedron nested fan.
\end{example}

\begin{proposition}
\label{prop:simplicialTypeConeInterval}
For any interval building set~$\building$, the type cone~$\typeCone(\nestedFan[\building])$ is simplicial.
\end{proposition}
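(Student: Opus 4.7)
The plan is to invoke \cref{prop:simplicialTypeConeNestedFan}, which reduces the statement to showing that every block $P \in \building$ with at least three distinct maximal strict subblocks is elementary. By \cref{rem:elementary}, this amounts to proving that any two blocks of $\maximalBlocks(P)$ are disjoint whenever $|\maximalBlocks(P)| \ge 3$.

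Suppose for contradiction that $P = [a,b]$ is an interval block admitting three distinct maximal strict subblocks $B_1, B_2, B_3 \in \maximalBlocks(P)$ with $B_1 \cap B_2 \ne \varnothing$. By the building set axiom, $B_1 \cup B_2 \in \building$, and $B_1 \subsetneq B_1 \cup B_2 \subseteq P$, where the strict inclusion comes from $B_2 \not\subseteq B_1$. Maximality of $B_1$ in $P$ then forces $B_1 \cup B_2 = P$. Since $B_1$ and $B_2$ are intervals whose union is the interval $[a,b]$, whose intersection is non-empty, and neither of which contains the other, after relabeling we may write $B_1 = [a, b_1]$ and $B_2 = [a_2, b]$ with $a < a_2 \le b_1 < b$.

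Now write $B_3 = [a_3, b_3]$. As $B_3 \subseteq P$ but $B_3 \not\subseteq B_1$ and $B_3 \not\subseteq B_2$, we have $a_3 < a_2$ and $b_3 > b_1$, whence $B_3 \supseteq [a_2, b_1] \supseteq B_1 \cap B_2 \ne \varnothing$. Applying the same argument to the pairs $\{B_1, B_3\}$ and $\{B_2, B_3\}$ yields $B_1 \cup B_3 = P = B_2 \cup B_3$, forcing $b_3 = b$ and $a_3 = a$. Hence $B_3 = [a,b] = P$, contradicting $B_3 \in \maximalBlocks(P)$.

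The argument is essentially just interval arithmetic combined with the union-closure axiom of building sets, so there is no serious obstacle; the only thing to be slightly careful about is noting that, once two intersecting maximal strict subblocks of $P$ are forced to cover $P$, any third maximal strict subblock must straddle their overlap and therefore intersect both, which iterates the rigidity and collapses $B_3$ onto $P$.
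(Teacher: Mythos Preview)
Your proof is correct and follows essentially the same strategy as the paper: reduce to \cref{prop:simplicialTypeConeNestedFan}, then use interval arithmetic together with maximality to rule out a non-elementary block with three maximal strict subblocks. The only minor difference is in how the contradiction is reached: the paper orders the three intervals as $[a,b],[c,d],[e,f]$ with $a<c<e$ and $b<d<f$, observes that $[a,b]\cup[c,d]=[a,d]$ is a block strictly between $[a,b]$ and $[i,j]$ (using $d<f\le j$), and stops there; you instead push the union all the way up to $P$ and then show the third block collapses onto $P$. Both arguments are short and equivalent in spirit.
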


\begin{proof}
Assume that~$\building$ has a non-elementary block~$[i,j]$, with at least three distinct maximal strict subblocks~$[a,b]$, $[c,d]$ and~$[e,f]$.
Since~$[a,b]$, $[c,d]$ and~$[e,f]$ are pairwise non nested, we can assume up to permutation that~$a < c < e$ and~$b < d < f$.
Since~$[i,j]$ is not elementary, $[a,b] \cap [c,d] \ne \varnothing$ and thus~$[a,b] \cup [c,d] = [a,d]$ is a block of~$\building$.
This contradicts the maximality of~$[a,b]$ since ${[a,b] \subsetneq [a,d] \subsetneq [i,j]}$ as~$b < d < f \le j$.
\end{proof}

\begin{remark}
Note that there are building sets~$\building$ for which the type cone~$\typeCone(\nestedFan[\building])$ is simplicial, but which are not (isomorphic to) interval building sets.
See \eg \cref{fig:nestedFans}\,(left).
\end{remark}

We now translate the facet description of \cref{coro:facetDescriptionTypeConeNestedFan} to the specific case of interval building sets.
We need a few additional notations.
Consider an interval building set~$\building$ on~$[n]$.
For~${1 \le i < j \le n}$, define
\[
\ell(i,j) \eqdef \min\nolimits \set{k \in [i+1,j]}{[k,j] \in \building}
\qquad\text{and}\qquad
r(i,j) \eqdef \max\nolimits \set{k \in [i,j-1]}{[i,k] \in \building}.
\]
Note that~$\ell(i,j)$ and~$r(i,j)$ are well-defined since~$\building$ contain all singletons.
Observe that~${[i, r(i,j)]}$ and~$[\ell(i,j), j]$ are maximal strict subblocs of~$[i,j]$.
Therefore,
\begin{itemize}
\item if~$[i,j] \in \building$ is elementary, then we have~$r(i,j) < \ell(i,j)$ and the maximal strict subblocks of~$[i,j]$ are the intervals~$[s_{k-1}(i,j), s_k(i,j)-1]$ for~${k \in [p]}$ where the sequence ${s_0(i,j) < s_1(i,j) < \dots < s_p(i,j)}$ is defined by the boundary conditions~$s_0(i,j) \eqdef i$ and ${s_1(i,j) = r(i,j)+1}$ and ${s_p(i,j) \eqdef j+1}$, and the induction~$s_k(i,j) \eqdef r(s_{k-1}(i,j), j+1)+1$.
\item if~$[i,j] \in \building$ is not elementary, we have~$\ell(i,j) \le r(i,j)$ so that 
\[
\qquad
[i, r(i,j)] \cup [\ell(i,j), j] = [i,j]
\qquad\text{and}\qquad
[i, r(i,j)] \cap [\ell(i,j), j] = [\ell(i,j), r(i,j)]. 
\]
Thus $[i, r(i,j)]$ and~$[\ell(i,j), j]$ are the only maximal strict subblocks of~$[i,j]$.
Moreover, the connected components of~$[i, r(i,j)] \cap [\ell(i,j), j] = [\ell(i,j), r(i,j)]$ are the intervals ${[t_{k-1}(i,j), t_k(i,j)-1]}$ for~${k \in [q]}$~where the sequence ${t_0(i,j) < t_1(i,j) < \dots < t_q(i,j)}$ is defined by the boundary conditions~$t_0(i,j) \eqdef \ell(i,j)$ and~$t_q(i,j) \eqdef r(i,j)+1$, and the induction~${t_k(i,j) \eqdef r(t_{k-1}(i,j), r(i,j)+1) + 1}$.
\end{itemize}
Using these notations, the following statement is just a translation of \cref{coro:facetDescriptionTypeConeNestedFan}.

\begin{proposition}
\label{prop:facetDescriptionTypeConeNestedFanInterval}
Consider an interval building set~$\building$ on~$[n]$ and let~$\building^\star \eqdef \building \ssm \set{\{i\}}{i \in [n]}$ denote the blocks which are not singletons.
Then the inequalities
\begin{itemize}
\item $\sum_{k \in [p]} \b{h}_{[s_{k-1}(i,j), s_k(i,j)-1]} > \b{h}_{[i,j]}$ for all~$[i,j] \in \building^\star$ with~$r(i,j) < \ell(i,j)$,
\item $\b{h}_{[i, r(i,j)]} + \b{h}_{[\ell(i,j), j]} > \b{h}_{[i,j]} + \sum_{k \in [q]} \b{h}_{[t_{k-1}(i,j), t_k(i,j)-1]}$ for all~$[i,j] \in \building^\star$ with~${\ell(i,j) \le r(i,j)}$,
\end{itemize}
provide an irredundant facet description of the type cone~$\typeCone(\nestedFan[\building])$.
\end{proposition}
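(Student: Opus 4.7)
The plan is to specialize the irredundant facet description of \cref{coro:facetDescriptionTypeConeNestedFan} to interval building sets, by explicitly identifying, for each non-singleton block $P = [i,j] \in \building^\star$, its maximal strict subblocks and, in the non-elementary case, the connected components of the intersection of these subblocks.

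First, I would observe that $[i, r(i,j)]$ and $[\ell(i,j), j]$ are by definition blocks of~$\building$ strictly contained in $[i,j]$, and both are maximal strict subblocks: any larger interval strictly contained in $[i,j]$ extending $[i, r(i,j)]$ to the right would contradict the maximality of~$r(i,j)$, and extending it to the left is impossible since~$i$ is already the left endpoint (symmetrically for $[\ell(i,j), j]$). Combining this with \cref{rem:elementary}, I would then argue that $[i,j]$ is elementary if and only if $r(i,j) < \ell(i,j)$: if $[i,j]$ is elementary, then all pairs of maximal strict subblocks are disjoint by \cref{rem:elementary}, giving in particular $[i, r(i,j)] \cap [\ell(i,j), j] = [\ell(i,j), r(i,j)] = \varnothing$; conversely, if $r(i,j) < \ell(i,j)$ then $[i, r(i,j)]$ and $[\ell(i,j), j]$ are two disjoint maximal strict subblocks of $[i,j]$, so \cref{rem:elementary} again yields elementarity.

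In the non-elementary case $\ell(i,j) \le r(i,j)$, the proof of \cref{prop:simplicialTypeConeInterval} already shows that $[i,j]$ admits at most two distinct maximal strict subblocks, so $\maximalBlocks([i,j]) = \{[i, r(i,j)], [\ell(i,j), j]\}$. Taking $B = [i, r(i,j)]$ and $B' = [\ell(i,j), j]$ in the second bullet of \cref{coro:facetDescriptionTypeConeNestedFan}, I would note that $B \cup B' = [i,j] = P$ so $\connectedComponents(P \ssm (B \cup B')) = \varnothing$, while $B \cap B' = [\ell(i,j), r(i,j)]$, whose connected components are obtained by the greedy induction defining the~$t_k(i,j)$: the maximal block of~$\building$ contained in $[\ell(i,j), r(i,j)]$ and starting at the leftmost uncovered point $t_{k-1}(i,j)$ is precisely $[t_{k-1}(i,j), r(t_{k-1}(i,j), r(i,j)+1)] = [t_{k-1}(i,j), t_k(i,j)-1]$, and the induction terminates when $t_q(i,j) = r(i,j)+1$.

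In the elementary case $r(i,j) < \ell(i,j)$, the maximal strict subblocks of $[i,j]$ are pairwise disjoint by \cref{rem:elementary}; since singletons belong to~$\building$, every point of $[i,j]$ lies in some such subblock, so these subblocks partition $[i,j]$ into consecutive intervals. The leftmost piece is $[i, r(i,j)] = [s_0(i,j), s_1(i,j)-1]$ by the same maximality argument as above, and inductively the $k$-th piece is $[s_{k-1}(i,j), s_k(i,j)-1]$ with $s_k(i,j) = r(s_{k-1}(i,j), j+1) + 1$, the induction terminating at $s_p(i,j) = j+1$. Substituting into the first bullet of \cref{coro:facetDescriptionTypeConeNestedFan} yields the claimed inequality, completing the translation. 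The only real obstacle is verifying that the inductive definitions of $s_k$ and $t_k$ indeed match the greedy enumeration of maximal subblocks and connected components; this is essentially bookkeeping once the meaning of $r(\cdot,\cdot)$ and $\ell(\cdot,\cdot)$ is unpacked.
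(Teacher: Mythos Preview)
Your proposal is correct and follows exactly the paper's approach: the paper states that this proposition ``is just a translation of \cref{coro:facetDescriptionTypeConeNestedFan}'' using the notations $\ell(i,j)$, $r(i,j)$, $s_k(i,j)$, $t_k(i,j)$ introduced immediately before it, and you have simply spelled out that translation in detail, including the characterization of elementarity via $r(i,j) < \ell(i,j)$, the identification of $\maximalBlocks([i,j])$ in each case, and the greedy enumeration of connected components.
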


\begin{example}
For instance
\begin{itemize}
\item for the building set containing all intervals of~$[n]$, we have~$\ell(i,j) = i+1$ and~$r(i,j) = j-1$, so that the facet defining inequalities of the type cone are~$\b{h}_{[i,j-1]} + \b{h}_{[i+1,j]} > \b{h}_{[i,j]} + \b{h}_{[i+1,j-1]}$ for all~$1 \le i < j \le n$ (with the convention that~$\b{h}_{[i+1,j-1]} = 0$ for~$i+1=j$),
\item for the building set containing all singletons~$\{i\}$ and all intervals~$[i]$ for~$i \in [n]$, we have $r(1,j) = j-1 < j = \ell(1,j)$, so that the facet defining inequalities of the type cone are~$\b{h}_{[j-1]} + \b{h}_{\{j\}} > \b{h}_{[j]}$ for all~$1 < j \le n$.
\end{itemize}
\end{example}

Generalizing \cref{prop:kinematicAssociahedra}, we finally combine \cref{prop:simplicialTypeCone,prop:facetDescriptionTypeConeNestedFanInterval} to define \defn{kinematic nestohedra} for interval building sets, similar to the constructions of~\cite{ArkaniHamedBaiHeYan, BazierMatteDouvilleMousavandThomasYildirim, PadrolPaluPilaudPlamondon} for associahedra, cluster associahedra and gentle associahedra.
Again, these polytopes are just affinely equivalent to the realizations in~$\R^n$, but they should be more natural from a mathematical physics perspective.

\begin{proposition}
\label{prop:kinematicNestohedraInterval}
Consider an interval building set~$\building$ on~$[n]$ and let~$\building^\star \eqdef \building \ssm \set{\{i\}}{i \in [n]}$ denote the blocks which are not singletons.
Then for any~$\b{p} \in \R_{>0}^{\building^\star}$, the polytope~$R_\b{p}(\building) \subseteq \R^\building$ defined as the intersection of the positive orthant~$ \set{\b{z} \in \R^{\building}}{\b{z} \ge 0}$ with the hyperplanes
\begin{itemize}
\item $\b{z}_{K} = 0$ for~$K \in \connectedComponents(\building)$,
\item $\sum_{k \in [p]} \b{z}_{[s_{k-1}(i,j), s_k(i,j)-1]} - \b{z}_{[i,j]} = \b{p}_{[i,j]}$~for~$[i,j] \in \building^\star$ with~$r(i,j) < \ell(i,j)$,
\item $\b{z}_{[i, r(i,j)]} + \b{z}_{[\ell(i,j), j]} - \b{z}_{[i,j]} - \sum_{k \in [q]} \b{z}_{[t_{k-1}(i,j), t_k(i,j)-1]} = \b{p}_{[i,j]}$ for~$[i,j] \in \building^\star$ with~${\ell(i,j) \le r(i,j)}$,
\end{itemize}
is a nestohedron whose normal fan is the nested fan~$\nestedFan[\building]$.
Moreover, the polytopes~$R_\b{p}(\building)$ for ${\b{p} \in \R_{>0}^{\building^\star}}$ describe all polytopal realizations of~$\nestedFan[\building]$ (up to translations).
\end{proposition}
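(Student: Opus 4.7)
The plan is to derive this statement as a direct application of Proposition~\ref{prop:simplicialTypeCone} to the interval nested fan. First, I would invoke Proposition~\ref{prop:simplicialTypeConeInterval} to ensure that $\typeCone(\nestedFan[\building])$ is simplicial, which is the hypothesis required to apply Proposition~\ref{prop:simplicialTypeCone}. I would then take the matrix~$\b{K}$ to be the matrix of inner facet normals of $\typeCone(\nestedFan[\building])$, using the irredundant list provided by Proposition~\ref{prop:facetDescriptionTypeConeNestedFanInterval}.

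A dimension count confirms compatibility: the nested fan $\nestedFan[\building]$ has $N = |\building| - |\connectedComponents(\building)|$ rays and lives in a space of dimension $n - |\connectedComponents(\building)|$, so the simplicial type cone has $N - (n - |\connectedComponents(\building)|) = |\building| - n = |\building^\star|$ facets, matching the index set of the parameter vector~$\b{p}$. Each row of~$\b{K}$ is thus naturally labeled by some $[i,j] \in \building^\star$, and its entries are read directly off the appropriate (elementary versus non-elementary) inequality in Proposition~\ref{prop:facetDescriptionTypeConeNestedFanInterval}. The equation $\b{K}\b{z} = \b{p}$ then unfolds literally into the two families of hyperplane equations listed in the statement.

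Finally, Proposition~\ref{prop:simplicialTypeCone} realizes the polytope inside $\R^N$ indexed by the rays, whereas $R_\b{p}(\building)$ is written in $\R^\building$. The extra conditions $\b{z}_K = 0$ for $K \in \connectedComponents(\building)$ simply embed the former space into the latter by padding with zero coordinates for connected components, which preserves the realization combinatorics. The \emph{moreover} clause on exhausting all polytopal realizations (up to translations) is then the direct translation of the corresponding clause in Proposition~\ref{prop:simplicialTypeCone}. The only step requiring any care is the transcription of the normal vectors in terms of the interval-specific sequences $s_k(i,j)$ and $t_k(i,j)$; no substantive mathematical obstacle arises beyond invoking the prior results.
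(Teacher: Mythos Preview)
Your proposal is correct and matches the paper's approach exactly: the paper does not give a standalone proof but simply presents the proposition as the result of combining \cref{prop:simplicialTypeCone} with \cref{prop:facetDescriptionTypeConeNestedFanInterval} (after noting simpliciality from \cref{prop:simplicialTypeConeInterval}). Your dimension count and remark on the embedding via the conditions~$\b{z}_K = 0$ make explicit precisely the bookkeeping the paper leaves implicit.
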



\addtocontents{toc}{\vspace{.1cm}}
\section*{Acknowledgments}

The graphical part of the present paper appeared in a preliminary version of our paper with Yann Palu and Pierre-Guy Plamondon~\cite{PadrolPaluPilaudPlamondon} as an illustration of the limits of our method based on the simpliciality of the type cone.
We later realized during the master project of Germain Poullot that, although many complications appear, the main results and techniques leading to the description of the type cone can be extended from graph associahedra to arbitrary nestohedra.
We are grateful to Yann Palu and Pierre-Guy Plamondon for encouraging us to separate this part from~\cite{PadrolPaluPilaudPlamondon}  to write the present paper.
We also thank two anonymous referees for helpful suggestions on a preliminary version of this paper.


\bibliographystyle{alpha}
\bibliography{typeConeNestohedra}

\newcommand{\etalchar}[1]{$^{#1}$}
\begin{thebibliography}{BMDM{\etalchar{+}}18}

\bibitem[AHBHY18]{ArkaniHamedBaiHeYan}
Nima Arkani-Hamed, Yuntao Bai, Song He, and Gongwang Yan.
\newblock Scattering forms and the positive geometry of kinematics, color and
  the worldsheet.
\newblock {\em J. High Energy Phys.}, (5):096, front matter+75, 2018.

\bibitem[Ber89]{Berge}
Claude Berge.
\newblock {\em Hypergraphs}, volume~45 of {\em North-Holland Mathematical
  Library}.
\newblock North-Holland Publishing Co., Amsterdam, 1989.
\newblock Combinatorics of finite sets, Translated from the French.

\bibitem[BMDM{\etalchar{+}}18]{BazierMatteDouvilleMousavandThomasYildirim}
V\'eronique Bazier-Matte, Guillaume Douville, Kaveh Mousavand, Hugh Thomas, and
  Emine Y\i{}ld\i{}r\i{}m.
\newblock {ABHY} {A}ssociahedra and {N}ewton polytopes of ${F}$-polynomials for
  finite type cluster algebras.
\newblock Preprint,
  \href{http://arxiv.org/abs/1808.09986}{\texttt{arXiv:1808.09986}}, 2018.

\bibitem[CD06]{CarrDevadoss}
Michael~P. Carr and Satyan~L. Devadoss.
\newblock Coxeter complexes and graph-associahedra.
\newblock {\em Topology Appl.}, 153(12):2155--2168, 2006.

\bibitem[CFZ02]{ChapotonFominZelevinsky}
Fr{\'e}d{\'e}ric Chapoton, Sergey Fomin, and Andrei Zelevinsky.
\newblock Polytopal realizations of generalized associahedra.
\newblock {\em Canad. Math. Bull.}, 45(4):537--566, 2002.

\bibitem[CLS11]{CoxLittleSchenckToric}
David~A. Cox, John~B. Little, and Henry~K. Schenck.
\newblock {\em Toric varieties}, volume 124 of {\em Graduate Studies in
  Mathematics}.
\newblock American Mathematical Society, Providence, RI, 2011.

\bibitem[DCP95]{DeConciniProcesi}
Conrado De~Concini and Claudio Procesi.
\newblock Wonderful models of subspace arrangements.
\newblock {\em Selecta Math. (N.S.)}, 1(3):459--494, 1995.

\bibitem[Def21]{Defant-fertilitopes}
Colin Defant.
\newblock Fertilitopes.
\newblock Preprint,
  \href{http://arxiv.org/abs/2102.11836}{\texttt{arXiv:2102.11836}}, 2021.

\bibitem[Dev09]{Devadoss}
Satyan~L. Devadoss.
\newblock A realization of graph associahedra.
\newblock {\em Discrete Math.}, 309(1):271--276, 2009.

\bibitem[DP11]{DosenPetric}
Kosta Do\v{s}en and Zoran Petri\'{c}.
\newblock Hypergraph polytopes.
\newblock {\em Topology Appl.}, 158(12):1405--1444, 2011.

\bibitem[DRS10]{DeLoeraRambauSantos}
Jesus~A. {De Loera}, J\"org Rambau, and Francisco Santos.
\newblock {\em Triangulations: Structures for Algorithms and Applications},
  volume~25 of {\em Algorithms and {C}omputation in Mathematics}.
\newblock Springer Verlag, 2010.

\bibitem[Edm70]{Edmonds}
Jack Edmonds.
\newblock Submodular functions, matroids, and certain polyhedra.
\newblock In {\em Combinatorial {S}tructures and their {A}pplications ({P}roc.
  {C}algary {I}nternat. {C}onf., {C}algary, {A}lta., 1969)}, pages 69--87.
  Gordon and Breach, New York, 1970.

\bibitem[FS05]{FeichtnerSturmfels}
Eva~Maria Feichtner and Bernd Sturmfels.
\newblock Matroid polytopes, nested sets and {B}ergman fans.
\newblock {\em Port. Math. (N.S.)}, 62(4):437--468, 2005.

\bibitem[GKZ08]{GelfandKapranovZelevinsky}
Israel Gelfand, Mikhail Kapranov, and Andrei Zelevinsky.
\newblock {\em Discriminants, resultants and multidimensional determinants}.
\newblock Modern Birkh\"auser Classics. Birkh\"auser Boston Inc., Boston, MA,
  2008.
\newblock Reprint of the 1994 edition.

\bibitem[HL07]{HohlwegLange}
Christophe Hohlweg and Carsten Lange.
\newblock Realizations of the associahedron and cyclohedron.
\newblock {\em Discrete Comput.~Geom.}, 37(4):517--543, 2007.

\bibitem[Hoh12]{Hohlweg}
Christophe Hohlweg.
\newblock Permutahedra and associahedra: generalized associahedra from the
  geometry of finite reflection groups.
\newblock In {\em Associahedra, {T}amari lattices and related structures},
  volume 299 of {\em Prog. Math. Phys.}, pages 129--159. Birkh\"auser/Springer,
  Basel, 2012.

\bibitem[Lod04]{Loday}
Jean-Louis Loday.
\newblock Realization of the {S}tasheff polytope.
\newblock {\em Arch.~Math.~(Basel)}, 83(3):267--278, 2004.

\bibitem[McM73]{McMullen-typeCone}
Peter McMullen.
\newblock Representations of polytopes and polyhedral sets.
\newblock {\em Geometriae Dedicata}, 2:83--99, 1973.

\bibitem[Mey74]{Meyer}
Walter Meyer.
\newblock Indecomposable polytopes.
\newblock {\em Trans. Amer. Math. Soc.}, 190:77--86, 1974.

\bibitem[MP17]{MannevillePilaud-compatibilityFans}
Thibault Manneville and Vincent Pilaud.
\newblock Compatibility fans for graphical nested complexes.
\newblock {\em J. Combin. Theory Ser. A}, 150:36--107, 2017.

\bibitem[Pil17]{Pilaud-removahedra}
Vincent Pilaud.
\newblock Which nestohedra are removahedra?
\newblock {\em Rev. Colombiana Mat.}, 51(1):21--42, 2017.

\bibitem[Pos09]{Postnikov}
Alexander Postnikov.
\newblock Permutohedra, associahedra, and beyond.
\newblock {\em Int. Math. Res. Not. IMRN}, (6):1026--1106, 2009.

\bibitem[PPPP19]{PadrolPaluPilaudPlamondon}
Arnau Padrol, Yann Palu, Vincent Pilaud, and Pierre-Guy Plamondon.
\newblock Associahedra for finite type cluster algebras and minimal relations
  between $\mathbf{g}$-vectors.
\newblock Preprint,
  \href{http://arxiv.org/abs/1906.06861}{\texttt{arXiv:1906.06861}}, 2019.

\bibitem[PRW08]{PostnikovReinerWilliams}
Alexander Postnikov, Victor Reiner, and Lauren~K. Williams.
\newblock Faces of generalized permutohedra.
\newblock {\em Doc.~Math.}, 13:207--273, 2008.

\bibitem[San09]{Saneblidze-freehedron}
Samson Saneblidze.
\newblock The bitwisted {C}artesian model for the free loop fibration.
\newblock {\em Topology Appl.}, 156(5):897--910, 2009.

\bibitem[She63]{Shephard}
Geoffrey~C. Shephard.
\newblock Decomposable convex polyhedra.
\newblock {\em Mathematika}, 10:89--95, 1963.

\bibitem[SP02]{PitmanStanley}
Richard~P. Stanley and Jim Pitman.
\newblock A polytope related to empirical distributions, plane trees, parking
  functions, and the associahedron.
\newblock {\em Discrete Comput. Geom.}, 27(4):603--634, 2002.

\bibitem[SS93]{ShniderSternberg}
Steve Shnider and Shlomo Sternberg.
\newblock {\em Quantum groups: From coalgebras to {D}rinfeld algebras}.
\newblock Series in Mathematical Physics. International Press, Cambridge, MA,
  1993.

\bibitem[Sta63]{Stasheff}
Jim Stasheff.
\newblock Homotopy associativity of {H}-spaces {I} \& {II}.
\newblock {\em Trans. Amer. Math. Soc.}, 108(2):275--312, 1963.

\bibitem[Zel06]{Zelevinsky}
Andrei Zelevinsky.
\newblock Nested complexes and their polyhedral realizations.
\newblock {\em Pure Appl. Math. Q.}, 2(3):655--671, 2006.

\bibitem[Zie98]{Ziegler-polytopes}
G{\"u}nter~M. Ziegler.
\newblock {\em Lectures on Polytopes}, volume 152 of {\em Graduate texts in
  Mathematics}.
\newblock Springer-Verlag, New York, 1998.

\end{thebibliography}
\label{sec:biblio}

\end{document}